\renewrobustcmd*{\bibinitdelim}{\,}
\renewrobustcmd*{\newunitpunct}{\addcomma\space}
\declaretheorem[style=definition, numberwithin=section]{definition}
\declaretheorem[style=definition, sibling=definition]{example}
\declaretheorem[style=plain, sibling=definition]{proposition}
\declaretheorem[style=plain, sibling=definition]{theorem}
\declaretheorem[style=plain, numbered=no, name=Theorem]{ntheorem}
\declaretheorem[style=plain, sibling=definition]{lemma}
\declaretheorem[style=plain, sibling=definition]{corollary}
\numberwithin{equation}{section}
\newcommand*{\longto}{\longrightarrow}
\newcommand*{\cof}{\hookrightarrow}
\newcommand*{\longcof}{\lhook\joinrel\longrightarrow}
\newcommand*{\colim}{\operatorname*{colim}}
\newcommand*{\restrict}[2]{{%
\left.\kern-\nulldelimiterspace%
#1%
\vphantom{\big|}%
\right|_{#2}%
}}
\newcommand*{\CatStyle}[1]{{\bf #1}}
\newcommand*{\N}{\mathbb{N}}
\newcommand*{\sSet}{\CatStyle{sSet}}
\newcommand*{\Spaces}{\CatStyle{\mathcal{S}}}
\newcommand*{\iCat}{\CatStyle{Cat_\infty}}
\newcommand*{\Spectra}{\CatStyle{Sp}}
\newcommand*{\sDelta}[1]{\Delta^{#1}}
\newcommand*{\dFace}[2]{\delta_{#2}}
\newcommand*{\dDege}[2]{\sigma_{#2}}
\newcommand*{\sFace}[2]{d_{#2}}
\newcommand*{\Obj}{\operatorname{Ob}}
\newcommand*{\Op}[1]{{#1}^{\operatorname{op}}}
\newcommand*{\Inv}[1]{{#1}^{-1}}
\newcommand*{\Id}{\operatorname{id}}
\newcommand*{\Hom}[1]{\operatorname{Hom}_{#1}}
\newcommand*{\Ev}{\operatorname{ev}}
\newcommand*{\pr}{\operatorname{pr}}
\newcommand*{\Mod}{\operatorname{Mod}}
\newcommand*{\PSh}{\operatorname{PSh}}
\newcommand*{\Sh}{\operatorname{Sh}}
\newcommand*{\Cont}{\operatorname{Cont}}
\newcommand*{\Ind}{\operatorname{Ind}}
\newcommand*{\Flat}{\operatorname{Flat}}
\newcommand*{\Loc}{\operatorname{Loc}}
\newcommand*{\Lan}{\operatorname{Lan}}
\newcommand*{\const}{\operatorname{\delta}}
\newcommand*{\ho}{\operatorname{h}}
\newcommand*{\join}{\operatorname{\star}}
\newcommand*{\Cones}[1]{{#1}^{\vartriangleleft}}
\newcommand*{\coCones}[1]{{#1}^{\vartriangleright}}
\newcommand*{\Map}[1]{\operatorname{Map}_{#1}}
\newcommand*{\gFun}[4]{\operatorname{Fun}^{#3}_{#4} (#1, #2)}
\newcommand*{\Fun}[2]{\gFun{#1}{#2}{}{}}
\newcommand*{\sFun}[2]{\Fun{#1}{\, {#2}}}
\newcommand*{\FunInv}[3]{\gFun{#1}{#2}{#3}{}}
\newcommand*{\FunL}[2]{\gFun{#1}{#2}{L}{}}
\newcommand*{\FunR}[2]{\gFun{#1}{#2}{R}{}}
\newcommand*{\Ynd}[1]{h_{#1}}
\newcommand*{\coYnd}[1]{h^{#1}}
\newcommand*{\Slice}[2]{{#1}_{/#2}}
\newcommand*{\coSlice}[2]{{#1}_{#2/}}
\newcommand*{\RelSlice}[2]{\Slice{#1}{#2}}
\newcommand*{\hRelSlice}[2]{\Slice{#1}{#2}}
\newcommand*{\cRelSlice}[2]{\Slice{#1}{#2}}
\newcommand*{\cA}{\mathcal{A}}
\newcommand*{\cB}{\mathcal{B}}
\newcommand*{\cC}{\mathcal{C}}
\newcommand*{\cD}{\mathcal{D}}
\newcommand*{\cE}{\mathcal{E}}
\newcommand*{\cF}{\mathcal{F}}
\newcommand*{\cH}{\mathcal{H}}
\newcommand*{\cI}{\mathcal{I}}
\newcommand*{\cK}{\mathcal{K}}
\newcommand*{\cL}{\mathcal{L}}
\newcommand*{\cS}{\mathcal{S}}
\newcommand*{\cX}{\mathcal{X}}
\newcommand*{\cT}{\mathcal{T}}
\newcommand*{\bA}{\mathbb{A}}
\newcommand*{\bB}{\mathbb{B}}
\newcommand*{\bL}{\mathbb{L}}
\newcommand*{\bC}{\mathbb{C}}
\newcommand*{\bD}{\mathbb{D}}
\newcommand*{\Deltabf}{\text{\boldmath$\Delta$}}
\newcommand*{\Gammabf}{\text{\boldmath$\Gamma$}}
\newcommand*{\Omegabf}{\text{\boldmath$\Omega$}}
\title{Sketchable Infinity Categories}
\author{Carles Casacuberta \and Javier J.\ Gutiérrez \and David Martínez-Carpena}
\begin{document}

\begin{abstract}
    A sketch is a category equipped with specified collections of cones and cocones.
    Its models are functors to the category of sets that send the distinguished cones and cocones to limit cones and colimit cocones, respectively.
    Sketches provide a categorical formalization of theories, interpreting logical operations in terms of limits and colimits.
    Gabriel and Ulmer showed that categories of models of sketches involving only cones (called limit sketches) are precisely the locally presentable categories, while Lair extended this correspondence to sketches including both cones and cocones, thereby characterizing accessible categories.

    In this article, we discuss a homotopy-coherent generalization of sketches in the context of $\infty$-categories and prove that presentable $\infty$-categories are the $\infty$-categories of models of limit sketches, whereas accessible $\infty$-categories arise as the $\infty$-categories of models of arbitrary sketches.
    As illustrations, we make the corresponding sketches explicit for a wide range of $\infty$-categories, including complete Segal spaces, $\infty$-operads, $A_\infty$-algebras, $E_\infty$-algebras, spectra, and higher sheaves.
\end{abstract}

\maketitle

\setcounter{tocdepth}{1}
\tableofcontents

\section{Introduction}

The notion of locally presentable categories, introduced by Gabriel and Ulmer in~\autocite{Gabriel1971}, arose as an abstraction of the idea of presenting mathematical structures by  generators and relations.
A~locally presentable category is a cocomplete category generated under filtered colimits by a set of compact objects.
By relaxing the assumption of cocompleteness and only requiring the existence of filtered colimits, one obtains the broader notion of \emph{accessible} categories.
For example, the category of fields is accessible but not locally presentable \autocite[Example~2.3(5)]{Adamek1994}.

A~\emph{sketch}, in the sense of \textcite{Bastiani1972}, consists of a small category equipped with a specified collection of cones and cocones.
A~\emph{model} of a sketch $\Sigma$ is a functor from $\Sigma$ to the category of sets that sends each distinguished cone to a limit cone and each distinguished cocone to a colimit cocone.
If the collection of distinguished cocones is empty, then $\Sigma$ is called a \emph{limit} sketch.
Likewise, if the collection of distinguished cones is empty, $\Sigma$ is a \emph{colimit} sketch.

There are numerous examples in the literature of categories modeled by limit sketches, including colored operads and the models of any Lawvere theory.
Through categorical logic, one can show that the models of limit sketches are equivalent to the models of essentially algebraic theories.
A~foundational result of Gabriel and Ulmer~\autocite{Gabriel1971}, revisited by Adámek and Rosický in~\autocite{Adamek1994}, highlights the importance of sketches by showing that locally presentable categories are precisely the categories of models of limit~sketches.
Extending this correspondence, Lair~\autocite{Lair1981} proved that accessible categories are exactly those modeled by mixed sketches, that is, sketches equipped with both cones and cocones; see also~\autocite[Theorem~2.58]{Adamek1994}.
Moreover, Makkai and Paré~\autocite[Subsection~5.1.3]{Makkai1989} exhibited a 2-adjunction between the small 2-categories of categories and of sketches.

The theory of limit (and colimit) sketches was first considered in homotopical settings through the framework of Quillen model categories~\autocite{Badzioch2002, Caviglia2016, Rosicky2007, Rosicky2013}.
In~\autocite{Badzioch2002}, Badzioch introduced the notion of homotopy models of algebraic theories on spaces, and Rosický~\autocite{Rosicky2007} extended this work by considering homotopy models of simplicial algebraic theories on spaces.
Moreover, he established a correspondence between homotopy models of simplicial limit sketches and homotopy locally finitely presentable simplicial categories.
In~\autocite{Rosicky2013}, Rosický further studied models of finite weighted enriched limit sketches in combinatorial monoidal model categories, proving that their homotopy models are again combinatorial, and hence locally presentable, under mild assumptions.
Subsequent developments include Corrigan-Salter’s generalization of Badzioch’s results to the multi-sorted setting~\autocite{CorriganSalter2015}, the work of Caviglia and Horel on homotopy models of limit sketches whose cones have finite connected diagrams~\autocite{Caviglia2016}, and Marelli’s construction of a homotopy limit 2-sketch modeling derivators~\autocite{Marelli2021}.

Joyal~\autocite{JoyalChicago,JoyalCRM} developed the theory of $\infty$-categories modeled as quasi-categories, which was followed by the influential work of Lurie~\autocite{Lurie2009}.
In his notes, Joyal introduced the notion of limit sketches on quasi-categories and outlined several examples of higher categories modelled by them.
Since then, very few authors have pursued the theory of limit sketches in this setting.
In~\autocite{Chu2021}, Chu and Haugseng studied algebraic patterns on $\infty$-categories, a particular kind of limit sketch endowed with additional structure.
This extra structure enables a direct connection between algebraic patterns and a certain class of higher monads.
In unpublished work~\autocite{Macpherson2021}, Macpherson examined colimit sketches with models in $\infty$-categories, introducing a refined notion of colimit sketch with constructions, which additionally specifies the colimits required to exist in the target category of models.

Within the framework of $\infty$-categories, Joyal~\autocite{JoyalChicago,JoyalCRM} and Lurie~\autocite{Lurie2009} pioneered the study of presentability and accessibility.
Both defined an accessible quasi-category as one equivalent to an $\Ind_\kappa$-category~(see~\autocite[Definition 5.3.5.1]{Lurie2009} for details) of some small $\infty$-category and some regular cardinal $\kappa$.
However, their definitions of presentable quasi-categories differ.
Joyal defined presentable $\infty$-categories as those equivalent to the categories of models of limit sketches, whereas Lurie defined them as those that are both cocomplete and accessible.
In his notes~\autocite{JoyalChicago,JoyalCRM}, Joyal asserts that his definition is equivalent to the condition of being cocomplete and accessible.
Modern literature generally follows Lurie’s conventions, and, to the best of our knowledge, no proof of Joyal’s claim has yet appeared in print.
Nevertheless, there are several partial results in this direction: Lurie~\autocite[Proposition 5.5.8.10]{Lurie2009} proved that the $\infty$-category of models of a higher Lawvere theory (that is, a sketch with only product cones) is presentable, and Chu and Haugseng~\autocite[Lemma 2.11]{Chu2021} showed that the $\infty$-category of models of an algebraic pattern is presentable.
Macpherson established in~\autocite[Proposition 3.4.1]{Macpherson2021} that the $\infty$-category of models of any colimit sketch is presentable.

The goal of this article is to extend to the setting of higher category theory both the equivalence between sketchability and accessibility and the equivalence between limit sketchability and presentability.
Our main results can be summarized as follows; see Corollaries~\ref{cor:main01} and~\ref{cor:main02}.

\begin{ntheorem}
    Let $\cC$ be an $\infty$-category.
    Then
    \begin{enumerate}[label={\rm (\roman*)}]
        \item $\cC$ is presentable if and only if $\cC$ is equivalent to the category of models of a limit sketch.
        \item $\cC$ is accessible if and only if $\cC$ is equivalent to the category of models of a sketch.
    \end{enumerate}
\end{ntheorem}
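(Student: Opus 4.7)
The plan is to prove each biconditional separately; the forward direction (models of a sketch are presentable or accessible) is largely formal and follows from the general theory of local objects, while the converse requires the explicit construction of a sketch starting from a presentable or accessible $\infty$-category.

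For the forward direction, given a sketch $\Sigma$, the $\infty$-category $\Mod(\Sigma)$ is a full subcategory of $\PSh(\Sigma)$ determined by requiring each distinguished cone to be sent to a limit cone and each distinguished cocone to a colimit cocone. Each such condition is equivalent to orthogonality with respect to a single canonical morphism in $\PSh(\Sigma)$, and the collection of these morphisms forms a set $S$ because $\Sigma$ is small. Thus $\Mod(\Sigma)$ is the $\infty$-category of $S$-local objects in $\PSh(\Sigma)$. By the standard theory of accessible localizations \autocite{Lurie2009}, this subcategory is accessible; when only cones are distinguished, it is moreover reflective and hence presentable.

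For the converse in~(i), let $\cC$ be presentable. Then $\cC$ arises as an accessible reflective localization of $\PSh(\cA)$ for some small $\infty$-category $\cA$, and such a localization is inversion at a small set $S$ of morphisms in $\PSh(\cA)$. The main task is to rewrite $S$-locality as a limit sketch condition on a small $\infty$-category $\Sigma$. One constructs $\Sigma$ by enlarging $\cA$ with auxiliary objects and cones that witness, for each $f \in S$, the fact that an $S$-local presheaf must send $f$ to an equivalence. This rests on expressing equivalence of spaces via finite limit diagrams (path objects, pullback squares, and their higher analogues), allowing each orthogonality condition to be encoded by distinguished cones alone. Restriction along the inclusion $\cA \to \Sigma$ then identifies $\Mod(\Sigma)$ with $\cC$. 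For~(ii), the same strategy applies, but the inclusion $\cC \cof \PSh(\cA)$ obtained from $\cC \simeq \Ind_\kappa(\cA)$ need not be reflective, so limit cones alone may not suffice; one adds distinguished cocones recording the $\kappa$-filtered colimit structure intrinsic to $\Ind_\kappa(\cA)$, yielding a mixed sketch.

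The principal obstacle is the construction in~(i) of a limit sketch whose cones faithfully capture $S$-locality. The classical proof converts an orthogonality constraint into an equalizer condition; in the $\infty$-categorical setting this must be upgraded to a homotopy-coherent finite limit diagram, and carrying this out cleanly will require the paper's framework for $\infty$-categorical sketches. A careful treatment of the Yoneda embedding and of compact generation will also be needed to keep the resulting sketch $\Sigma$ small.
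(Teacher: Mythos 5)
There is a genuine gap in your forward direction for part (ii). You claim that \emph{each} distinguished (co)cone condition is equivalent to orthogonality against a single canonical morphism, so that $\Mod(\Sigma)$ is always a class of $S$-local objects for a \emph{set} $S$. This is true for cones: since $\Map{}(-,F)$ sends colimits to limits, the condition that $F$ sends a cone $\alpha$ with point $x$ and diagram $D$ to a limit is locality against $m_\alpha\colon \colim \coYnd{\cA}\circ\Op{D}\to \coYnd{\cA}(x)$, which is exactly how the paper proves that $\Mod$ of a limit sketch is an accessible reflective localization of $\Fun{\cA}{\Spaces}$. But it fails for cocones: the condition $\colim_{\cH}(F\circ E)\xrightarrow{\ \sim\ } Fy$ involves a \emph{colimit} of the spaces $\Map{}(\coYnd{\cA}(E(h)),F)$, which is not a mapping space out of any object of $\Fun{\cA}{\Spaces}$, so it is not an orthogonality condition. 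If your reduction were correct, every sketchable $\infty$-category would be a set-local reflective localization of a presheaf category and hence presentable, contradicting the fact that (ii) is strictly more general than (i) (see the paper's example of nonempty connected spaces with perfect fundamental group, which is sketchable but not presentable). The paper instead realizes each cocone condition as a pullback of accessible $\infty$-categories along a functor $\widetilde{c}\colon \Fun{\cA}{\cC}\to\Fun{\sDelta{1}}{\cC}$ and the inclusion $\Fun{J}{\cC}\hookrightarrow\Fun{\sDelta{1}}{\cC}$ of isomorphisms, and invokes stability of accessibility under such pullbacks; this yields accessibility but not reflectivity, which is exactly the expected loss of presentability.

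The converses are also not established. For (i) you propose to encode $S$-locality, for $S$ the set of maps inverted by an accessible reflective localization of $\PSh(\cA)$, by adding cones; but the obstacle is not ``expressing equivalences via finite limit diagrams'' --- it is that the maps in $S$ generally have non-representable source \emph{and} target, so the locality condition is not of the form ``$Fx\to\lim F\circ D$ is an equivalence'' for any cone in a small category containing $\cA$, and you give no construction resolving this. The paper takes a different and much shorter route: write $\cC\simeq\Ind_\kappa(\cA)$ with $\cA$ admitting $\kappa$-small colimits, take the sketch on $\Op{\cA}$ whose cones are \emph{all} $\kappa$-small limit cones, and use the identification $\Ind_\kappa(\cA)\simeq\Cont_\kappa(\Op{\cA})$, which rests on the flatness theorem (a $\kappa$-flat presheaf is the same as a $\kappa$-filtered colimit of representables) --- the paper's main technical input, which your outline never touches. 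Likewise for the converse of (ii): the paper's cocones do not ``record the $\kappa$-filtered colimit structure''; they are the canonical colimit presentations of the new cone points adjoined in the free $\kappa$-small cocompletion $\widehat{\cA}$ of $\Op{\cA}$, which force a model to be determined by its restriction to $\Op{\cA}$. Without these ingredients the proposal remains a plan rather than a proof.
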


In Section~\ref{sec:sketches} we provide explicit limit sketches for the following presentable $\infty$-categories: spectra, Segal spaces and complete Segal spaces, $A_{\infty}$-spaces and $A_{\infty}$-rings, $E_{\infty}$-spaces and $E_{\infty}$-rings, infinite loop spaces, dendroidal Segal spaces and complete dendroidal Segal spaces, and higher sheaves.
We also give an example of a mixed sketch whose models are the nonempty path-connected spaces whose fundamental group is perfect, which form an accessible $\infty$-category that is not presentable.

Our proofs rely on a higher-categorical analog of the following classical characterization for ordinary categories.
Given a regular cardinal $\kappa$, a functor is called \emph{$\kappa$-flat} if its left Kan extension along the Yoneda embedding preserves $\kappa$-small limits.
Then the following holds:
\begin{equation}%
    \label{eq:flat}
    \text{\emph{A functor is $\kappa$-flat if and only if it is a $\kappa$-filtered colimit of representable functors}.}
\end{equation}

This characterization was first established by Kelly~\autocite{Kelly1982} and subsequently employed by Makkai and Paré~\autocite{Makkai1989} in their proof of the equivalence between sketchability and accessibility for ordinary categories.
The same result was later included by Borceux in~\autocite{Borceux1994} and by Adámek and Rosický in~\autocite{Adamek1994}, and was further generalized by Adámek, Borceux, Lack, and Rosický~\autocite{Adamek2002}, by replacing regular cardinals with classes of categories.
More recently, Lack and Tendas~\autocite{Lack2022a} established an enriched version of the same claim~\eqref{eq:flat}.

Our principal technical contribution in this article is the following characterization of flatness, where we write $\hRelSlice{\cA}{F}$ for the relative slice of a presheaf $F \colon \cA \to \Spaces$ along the Yoneda embedding, and $\Lan F$ for the corresponding left Kan extension; see Theorem~\ref{thm:flat<->filtcolim}.

\begin{ntheorem}%
    Let $\cA$ be a small $\infty$-category, $\kappa$ a regular cardinal, and $F \colon \Op{\cA} \to \Spaces$ a presheaf.
    The following statements are equivalent:
    \begin{enumerate}[label={\rm (\roman*)}]
        \item $F$ is $\kappa$-flat.
        \item $\Lan F$ preserves $\kappa$-small limits of representables.
        \item $\hRelSlice{\cA}{F}$ is a $\kappa$-filtered $\infty$-category.
        \item $F$ is a $\kappa$-filtered colimit of representables.
    \end{enumerate}
\end{ntheorem}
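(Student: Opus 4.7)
My plan is to prove the cycle (iv) $\Rightarrow$ (i) $\Rightarrow$ (ii) $\Rightarrow$ (iii) $\Rightarrow$ (iv). The implication (i) $\Rightarrow$ (ii) is tautological, since representables are particular objects of $\PSh(\cA)$. For (iv) $\Rightarrow$ (i), writing $F \simeq \colim_{i} h_{a_i}$ as a $\kappa$-filtered colimit of representables, the cocontinuity of $\Lan$ in its argument gives $\Lan F \simeq \colim_i \Lan h_{a_i}$; each $\Lan h_{a_i}$ preserves all limits, being an evaluation-type functor on $\PSh(\cA)$ and limits in $\PSh(\cA)$ being pointwise; and $\kappa$-filtered colimits commute with $\kappa$-small limits in $\Spaces$ by Lurie (HTT~5.3.3.3), so $\Lan F$ preserves $\kappa$-small limits. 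For (iii) $\Rightarrow$ (iv), the $\infty$-categorical Yoneda density presentation $F \simeq \colim_{\cA_{/F}} h_{\pi(-)}$, with $\pi \colon \cA_{/F} \to \cA$ the forgetful projection, already expresses $F$ as a $\kappa$-filtered colimit of representables when $\cA_{/F}$ is $\kappa$-filtered.

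The main technical content lies in (ii) $\Rightarrow$ (iii). I would translate $\kappa$-filteredness via the criterion (Lurie, HTT~5.3.1.13) that $\cA_{/F}$ is $\kappa$-filtered iff every $\kappa$-small diagram $p \colon K \to \cA_{/F}$ admits a cocone. By the universal property of the slice, such a $p$ corresponds to a pair $(\tilde p \colon K \to \cA, \phi \colon P \to F)$ with $P \coloneqq \colim_K h_{\tilde p(-)}$ in $\PSh(\cA)$, and a cocone in $\cA_{/F}$ is precisely a factorization of $\phi$ through some representable $h_a$. I would extract this factorization by adapting the classical Kelly--Makkai--Paré argument to the $\infty$-categorical setting: apply hypothesis (ii) to the relevant $\kappa$-small limit of representables built from $\tilde p$, use the coend description $\Lan F(G) \simeq \colim_{\cA_{/G}} F \circ \pi$ to rewrite $\Lan F$ explicitly on that limit, and deduce the desired factorization from the resulting equivalence together with the density of representables.

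The principal obstacle will be the $\infty$-categorical coherence in this last step: classically, the argument produces a cocone as a strict diagram, whereas here one must produce a coherent point of the cocone $\infty$-category $(\cA_{/F})_{p/}$. Fortunately, HTT~5.3.1 ensures that in the $\kappa$-filtered regime, nonemptiness of the cocone $\infty$-category is equivalent to weak contractibility, so the task reduces to producing a single coherent factorization. This should be feasible using the naturality of the coend formula, the cofinality properties of the projection $\cA_{/F} \to \cA$, and the $\infty$-categorical Yoneda lemma to lift the set-theoretic existence statements produced by the classical argument to genuine morphisms in $\PSh(\cA)$.
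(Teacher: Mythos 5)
Your proposal follows essentially the same route as the paper: the same cycle of implications, the same arguments for the three easier steps (cocontinuity of $\Lan$, the identification $\Lan(\Ynd{\cA}(a))\simeq\Ev_a$ and commutation of $\kappa$-filtered colimits with $\kappa$-small limits for (iv)$\Rightarrow$(i), and the canonical colimit of representables over $\hRelSlice{\cA}{F}$ for (iii)$\Rightarrow$(iv)), and for (ii)$\Rightarrow$(iii) the same Kelly--Makkai--Par\'e strategy the paper executes, namely applying (ii) to the $\kappa$-small limit $H=\lim_{k}\Map{\cA}(\tilde p(k),-)$ of corepresentables, computing $\Lan F(H)$ both as $\lim_k F(\tilde p(k))$ and as a colimit over $\hRelSlice{\cA}{F}$, and extracting a cocone from a preimage of the tautological point. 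The only divergence is that you leave the coherence bookkeeping of this last step as a plan, whereas the paper carries it out via an effective-epimorphism/$\pi_0$-surjectivity argument and an explicit pair of compatible cones over the pullback square defining $\Op{(\hRelSlice{\cA}{F})}$; your outline correctly identifies both the construction and the obstacle.
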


Further work on flatness in higher categories has also appeared in the literature.
Raptis and Sch\"appi~\autocite{Raptis2022} proved a version of~\eqref{eq:flat} for presheaves valued in arbitrary $\infty$-topoi, restricted to the finite case $\kappa = \aleph_0$.
An unpublished preprint by Rezk~\autocite{Rezk2021} contains a generalization of~\eqref{eq:flat} to classes of categories.

\medskip
\noindent
\textbf{Organization of the paper.}
The structure of the paper is as follows.
Section~\ref{sec:preliminaries} reviews the necessary background on $\infty$-categories, including notation, size conventions, and the main results concerning accessibility and presentability.
Section~\ref{sec:sketches} introduces the notion of higher sketches, generalizing the classical concept to the $\infty$-categorical setting, and provides several illustrative examples.
In Section~\ref{sec:flat-functors}, we study flat functors and prove the higher-categorical analog of the classical characterization of $\kappa$-flatness (Theorem~\ref{thm:flat<->filtcolim}).
As consequences, we obtain new characterizations of accessible and presentable $\infty$-categories (Corollary~\ref{lem:acc-flat} and Corollary~\ref{cor:presentable<->Cont}).
Section~\ref{sec:limit-sketchable} establishes the equivalence between limit-sketchability and presentability for $\infty$-categories, culminating in Corollary~\ref{cor:main01}.
We study limit sketches with models in arbitrary $\infty$-categories using a construction which, at the level of models, corresponds to Lurie’s tensor product of presentable $\infty$-categories.
Section~\ref{sec:sketchable} extends the argument to arbitrary sketches, proving the equivalence between sketchability and accessibility (Corollary~\ref{cor:main02}).

\medskip
\noindent
\textbf{Acknowledgements.}
We are indebted to George Raptis for very useful comments and suggestions, and we also thank Ji\v{r}í Rosický and Ivan di Liberti for valuable conversations.
The authors acknowledge support from the Departament de Recerca i Universitats de la Generalitat de Catalunya (2021 SGR 00697) and from the Agencia Estatal de Investigación (MCIN/AEI) under grants PID2020-117971GB-C22, PID2024-155646NB-I00, and Europa Excelencia grant EUR2023-143450.

\section{Preliminaries}%
\label{sec:preliminaries}

\subsection{Quasi-categories}

In this work, we implicitly use the formalism of quasi-categories~\autocite{Cisinski2019,Joyal2002,Lurie2009} for $\infty$\nobreakdash-category theory.
Thus, we use the term \textit{$\infty$-groupoid} to refer to a Kan complex.
Every $\infty$-category $\cC$ has a collection of objects $\Obj (\cC)$, but we denote the fact that $x$ is an object of $\cC$ by writing $x \in \cC$.
We denote $\sDelta{n} = \Deltabf(-, [n])$, where $\Deltabf$ is the simplex category and $[n] = \{0 < 1 < \cdots < n\}$.
If $\cC$ and $\cD$ are $\infty$-categories, then the simplicial set $\Fun{\cC}{\cD}$, whose $n$-simplices are the maps $\cC \times \Delta^n \to \cD$, is an $\infty$-category.
We call $\Fun{\cC}{\cD}$ the $\infty$\nobreakdash-category of functors from $\cC$ to~$\cD$, and denote an object $F \in {\Fun{\cC}{\cD}}$ by $F \colon \cC \to \cD$.
A~natural transformation between two functors $F, G \colon \cC \to \cD$ is a $1$-simplex $\alpha \colon \cC \times \sDelta{1} \to \cD$ of $\Fun{\cC}{\cD}$ whose restriction to $\cC\times\{0\}$ is equal to $F$ and whose restriction to $\cC\times\{1\}$ is equal to~$G$.

Since the nerve of any category is a quasi-category, we treat categories as quasi-categories without specifying the nerve functor in the notation, if no confusion can arise.
In particular, $\Delta^n$ can be viewed as an $\infty$-category, since $\Delta^n$ is the nerve of the poset $0\to\cdots\to n$.

Given two objects $x, y$ of an $\infty$-category~$\cC$, we denote by $\Map{\cC} (x, y)$ the $\infty$-groupoid of morphisms (or \textit{mapping space}) from $x$ to~$y$, which is defined by the following pullback of simplicial sets:
\[
    \begin{tikzcd}[ampersand replacement=\&,cramped]
        {\Map{\cC} (x, y)} \& {\Fun{\sDelta{1}}{\cC}} \\
        {\sDelta{0}} \& {\cC\times\cC,}
        \arrow[from=1-1, to=1-2]
        \arrow[from=1-1, to=2-1]
        \arrow["{(\Ev_0,\, \Ev_1)}", from=1-2, to=2-2]
        \arrow[""{name=0, anchor=center, inner sep=0}, "{(x, y)}", hook, from=2-1, to=2-2]
        \arrow["\lrcorner"{anchor=center, pos=0.125}, draw=none, from=1-1, to=0]
    \end{tikzcd}
\]
where $(\Ev_0, \Ev_1)$ is obtained by applying $\Fun{-}{\cC}$ to the map $(\sFace{1}{0} , \sFace{1}{1}) \colon \sDelta{1} \to \sDelta{0} \times \sDelta{0}$.
We denote by $f \colon x \to y$ the fact that $f \in {\Map{\cC} (x, y)}$.
A mapping space of a functor $\infty$-category $\Map{\sFun{\cC}{\cD}} (F, G)$ has as 0-simplices the natural transformations from $F$ to~$G$.
We denote these by $\alpha \colon F \Rightarrow G$.

Every $\infty$-category $\cC$ has a \textit{homotopy category} $\ho (\cC)$, with the same objects as $\cC$ and with $\ho(\cC)(x,y) = \pi_0 \Map{\cC} (x, y)$.
Given a morphism $f \colon x \to y$, we denote by $[f]$ the corresponding morphism in~$\ho (\cC)$.
A morphism $f \colon x \to y$ in $\cC$ is an \textit{isomorphism} if it is invertible in~$\ho (\cC)$.

We call an object \textit{unique} with a property when it is unique up to isomorphism among those sharing the property.
In the case of a morphism $f \colon x \to y$, we call it \textit{unique} with a property if the subspace of $\Map{\cC} (x, y)$ of those morphisms sharing the given property is contractible.
For example, the inverse of an isomorphism is unique.

With this convention, given two composable morphisms $f \colon x \to y$ and $g \colon y \to z$ in $\cC$, there is a unique morphism $h \colon x \to z$ such that $[h] = [g] \circ [f]$.
Composition can also be studied at the level of mapping spaces, where there is a unique (up to natural isomorphism) composition functor
\[
    - \circ - \colon
    \Map{\cC} (y, z) \times \Map{\cC} (x, y)
    \longrightarrow \Map{\cC} (x, z),
\]
defined by the construction given in~\autocite[\S\,45.6]{Rezk2022}.
It has the expected properties; namely, it is associative up to homotopy, and it matches with the composition defined in~$\ho(\cC)$.

\subsection{Cardinality assumptions}

The main concepts studied in this article are related to sizes of $\infty$\nobreakdash-categories.
Regular cardinals are assumed to be infinite.
For an uncountable regular cardinal~$\kappa$, an $\infty$-groupoid $\cX$ is called \textit{$\kappa$-small} if $\pi_0(\cX)$ and the homotopy groups $\pi_n (\cX, x)$ have  cardinality smaller than $\kappa$ for each $x \in \cX$ and each $n \geq 1$.
If $\kappa = \aleph_0$, then an $\infty$-groupoid $\cX$ is called \textit{$\aleph_0$-small} if it is a homotopy retract of a finite simplicial set.
An $\infty$-category is called \textit{locally $\kappa$-small} if all its mapping spaces are $\kappa$-small $\infty$-groupoids.
Furthermore, an $\infty$-category is called \textit{$\kappa$-small} if it is locally $\kappa$-small and its set of isomorphism classes of objects has cardinality smaller than~$\kappa$.
This definition is found with the name of \emph{essentially $\kappa$-small} $\infty$-category in some references such as~\autocite{Lurie2009}, but we follow the conventions of~\autocite{Anel2022, Cisinski2019}.

We assume the existence of a strongly inaccessible cardinal~$\kappa$, and call \emph{small sets} (or sometimes just \emph{sets}) the sets with cardinality smaller than $\kappa$.
An $\infty$-category will be called \textit{small} (resp.\ \textit{locally small}) if it is $\kappa$-small (resp.\ locally $\kappa$-small).
The locally small $\infty$-category of all small $\infty$-groupoids is denoted by $\Spaces$, and the one of all small $\infty$-categories is denoted by~$\iCat$.

As is common in the literature, the isomorphisms between $\infty$-categories or $\infty$-groupoids, viewed as objects of $\iCat$, will be called \textit{equivalences}.

If $\cK$ is small and $\cC$ is locally small, then $\Fun{\cK}{\cC}$ is a locally small $\infty$-category~\autocite[Example~5.4.1.8]{Lurie2009}.
Throughout this paper, unless explicitly specified, all $\infty$-categories are assumed to be locally small.
In the case where we need some $\infty$-category which is not necessarily locally small, it will be called a \textit{large} $\infty$-category.

\subsection{Notation and basic constructions}

A functor $F  \colon \cC \to \cD$ is \textit{essentially surjective} if for every object $y \in \cD$ there exists an object $x \in \cC$ together with an isomorphism $y \cong F x$.
We say that $F$ is \textit{fully faithful} if the map
\[
    \Map{\cC} (x, y) \longto \Map{\cD} (F x, F y)
\]
is an equivalence for every pair of objects $x, y \in \cC$.
A \textit{full subcategory} of $\cC$ is an $\infty$-category $\cA$ together with a fully faithful functor $J\colon \cA \to \cC$.
We then say that $J$ is an \textit{inclusion} of $\cA$ into~$\cC$.
Every subset of objects of $\cC$ determines a full subcategory, which is unique up to equivalence.

We say that $F \colon \cC \to \cD$ is \emph{left adjoint} to $G \colon \cD \to \cC$ (or that $G$ is \emph{right adjoint} to $F$) if there exist natural transformations $\mu \colon \Id_\cC \longto GF$ and $\epsilon \colon FG \longto \Id_\cD$ such that the composite transformations
\[
    F
    \overset{F \mu}{\longto}
    FGF
    \overset{\epsilon F}{\longto}
    F,
    \hspace{1.2cm}
    G
    \overset{\mu G}{\longto}
    GFG
    \overset{G \epsilon}{\longto}
    G
\]
are equivalent to ${\rm id}_F$ and ${\rm id}_G$ respectively.

For any $\infty$-category $\cC$, the \textit{opposite $\infty$-category} $\Op{\cC}$ has the same objects as $\cC$ together with $\Map{\Op{\cC}} (x, y) = \Op{\Map{\cC} (y, x)}$, where the opposite of a simplicial set is defined by reversing the indexing of faces and degeneracies \autocite[Definition 1.5.7]{Cisinski2019}.
Consequently, $\Op{(\Op{\cC})} = \cC$.
For any small $\infty$-category $\cA$, we denote $\PSh (\cA) = \Fun{\Op{\cA}}{\Spaces}$ and call it the $\infty$-category of \textit{presheaves} on~$\cA$.

The \textit{slice} $\Slice{\cC}{x}$ of an object $x \in \cC$ is defined as the following pullback:
\begin{equation}%
    \label{slice}
    \begin{tikzcd}[ampersand replacement=\&,cramped]
        {\Slice{\cC}{x}} \&\& {\Fun{\sDelta{1}}{\cC}} \\
        {\cC \simeq \cC \times \Delta^0} \&\& {\cC \times \cC}\rlap{.}
        \arrow[from=1-1, to=1-3]
        \arrow[from=1-1, to=2-1]
        \arrow["{(\Ev_0,\, \Ev_1)}", from=1-3, to=2-3]
        \arrow[""{name=0, anchor=center, inner sep=0}, "{\Id \times x}"', from=2-1, to=2-3]
        \arrow["\lrcorner"{anchor=center, pos=0.125}, draw=none, from=1-1, to=0]
    \end{tikzcd}
\end{equation}
More generally, we define the \textit{relative slice} $\RelSlice{F}{x}$ as the pullback of $\Slice{\cC}{x}$ along a functor $F \colon \cA \to \cC$:
\[
    \begin{tikzcd}[ampersand replacement=\&,cramped]
        {\RelSlice{F}{x}} \& {\Slice{\cC}{x}} \\
        \cA \& \cC\rlap{.}
        \arrow[from=1-1, to=1-2]
        \arrow[from=1-1, to=2-1]
        \arrow["\lrcorner"{anchor=center, pos=0.125}, draw=none, from=1-1, to=2-2]
        \arrow[from=1-2, to=2-2]
        \arrow["F", from=2-1, to=2-2]
    \end{tikzcd}
\]
Dually, the \textit{coslice} $\coSlice{\cC}{x}$ is defined as in~\eqref{slice}, by replacing ${\rm id}\times x$ by $x\times {\rm id}$.
A~relative coslice category $F_{x/}$ along a functor $F \colon \cA \to \cC$ is defined similarly, by taking a pullback of $\coSlice{\cC}{x} \to \cC$.

If $\cK$ is a small $\infty$-category, a \textit{$\cK$-diagram} in an $\infty$-category $\cC$ is a functor $\cK \to \cC$.
For any small $\infty$-category $\cK$ and any object $x \in \cC$, the \emph{constant diagram} $\const x \colon \cK \to \cC$ sends all objects of $\cK$ to $x$ and higher morphisms of $\cK$ to higher identities over~$x$, i.e., the iterated application of the first degeneracy over~$x$.
By applying $\Fun{-}{\cC}$ to the terminal map $\cK \to \sDelta{0}$, we obtain the \textit{diagonal functor} $\const \colon \cC \to \Fun{\cK}{\cC}$, which sends any object $x \in \cC$ to the constant diagram $\const x$, and any morphism $f \colon x \to y$ to a natural transformation $\const f \colon \const x \Rightarrow \const y$
defined by post-composition with~$f$.
The \textit{left cone} $\Cones{K}$ and \textit{right cone} $\coCones{\cK}$ are defined as the following pushouts:
\[
    \begin{tikzcd}[ampersand replacement=\&,cramped]
        {\cK \times \sDelta{0}} \& {\cK \times \sDelta{1}} \\
        {\sDelta{0}} \& {\Cones{\cK}}
        \arrow["{\Id \times 0}", from=1-1, to=1-2]
        \arrow["{\pr_2}"', from=1-1, to=2-1]
        \arrow[from=1-2, to=2-2]
        \arrow[from=2-1, to=2-2]
        \arrow["\lrcorner"{anchor=center, pos=0.125, rotate=180}, draw=none, from=2-2, to=1-1]
    \end{tikzcd}
    \hspace{2cm}
    \begin{tikzcd}[ampersand replacement=\&,cramped]
        {\cK \times \sDelta{0}} \& {\cK \times \sDelta{1}} \\
        {\sDelta{0}} \& {\coCones{\cK}}\rlap{.}
        \arrow["{\Id \times 1}", from=1-1, to=1-2]
        \arrow["{\pr_2}"', from=1-1, to=2-1]
        \arrow[from=1-2, to=2-2]
        \arrow[from=2-1, to=2-2]
        \arrow["\lrcorner"{anchor=center, pos=0.125, rotate=180}, draw=none, from=2-2, to=1-1]
    \end{tikzcd}
\]

For a diagram $D \colon \cK \to \cC$, we define the $\infty$-category of \textit{cones} $\cRelSlice{\cC}{D}$ as the relative slice $\RelSlice{\const}{D}$ along the constant functor.
An object $\alpha \in \cRelSlice{\cC}{D}$ can be viewed as a pair $(x, \alpha)$, where $x \in \cC$ is the \textit{cone point} and $\alpha \colon \const x \Rightarrow D$ is a natural transformation.
By~\autocite[Proposition 1.2.9.2]{Lurie2009}, a cone $\alpha \in \cRelSlice{\cC}{D}$ is equivalent to a functor $\widehat{\alpha} \colon \Cones{\cK} \to \cC$ such that $\restrict{\widehat{\alpha}}{\cK} = D$.

Then, a \textit{limit} for $D$ is a terminal object of $\cRelSlice{\cC}{D}$, which can be viewed as a pair $(\lim D, \ell)$ where $\lim D \in \cC$ and $\ell \colon \const \lim D \Rightarrow D$ is a natural transformation.
By~\autocite[Lemma 4.2.4.3]{Lurie2009}, a limit for $D$ has the following universal property: for all $y \in \cC$, the map
\[
    \Map{\cC} (y, \lim D)
    \overset{\const}{\longrightarrow}
    \Map{\sFun{\cK}{\cC}} (\const y, \const \lim D)
    \xrightarrow{\ell \operatorname{\circ} -}
    \Map{\sFun{\cK}{\cC}} (\const y, D)
\]
is an equivalence of $\infty$-groupoids.
Dually, a \textit{$\cK$-cocone} in $\cC$ is a $\cK$-cone in the opposite $\infty$-category $\Op{\cC}$, and a \textit{colimit} is a limit in~$\Op{\cC}$.

An $\infty$-category $\cC$ is \textit{(co)complete} if, for every small $\infty$-category~$\cK$, each diagram $D \colon \cK \to \cC$ admits a (co)limit.
For example, the $\infty$-category of presheaves over any small $\infty$-category is complete and cocomplete.

Let $\cA$ and $\cK$ be small $\infty$-categories, and $\kappa$ be a regular cardinal.
A $\cK$-diagram, $\cK$-cone or limit $\cK$-cone is \textit{$\kappa$-small} if $\cK$ is $\kappa$-small as an $\infty$-category.
A functor $F \colon \cA \to \Spaces$ is called \textit{$\kappa$-(co)continuous} if it preserves $\kappa$-small (co)limits.
In particular, it is called \textit{finitely (co)continuous} if it is $\aleph_0$-(co)continuous.
We denote by $\Cont_\kappa (\cA)$ the full subcategory of $\Fun{\cA}{\Spaces}$ spanned by all $\kappa$-continuous functors.

Let $F \colon \cC \to \cD$ be a functor of $\infty$-categories which admits a right adjoint $G \colon \cD \to \cC$.
For every small $\infty$-category $\cK$, the functor $F$ preserves colimits and the functor $G$ preserves limits~\autocite[Proposition 5.2.3.5]{Lurie2009}.
Given an object $c \in \cC$, define the \textit{evaluation functor} $\Ev_c \colon \Fun{\cC}{\cD} \to \Fun{\sDelta{0}}{\cD} \simeq \cD$ as the functor resulting from applying $\Fun{-}{\cD}$ to the morphism $c \colon \sDelta{0} \to \cC$ of simplicial sets.
By~\autocite[Proposition 5.1.2.3]{Lurie2009} and its dual,  $\Ev_c \colon \Fun{\cC}{\cD} \to \cD$ preserves limits and colimits for all $c \in \cC$.

Later, we will need both the covariant and the contravariant Yoneda embeddings, and thus we need to fix a notation to distinguish between the two:

\begin{theorem}[Yoneda Lemma]%
    \label{thm:yoneda-lemma}
    Let $\cA$ be a small $\infty$-category.
    There exists a unique functor
    \[
        \Ynd{\cA} \colon \cA \longrightarrow \PSh (\cA)
    \]
    such that $\Ynd{\cA}(x) (y) \simeq \Map{\cA} (y, x)$ for all $x, y \in \cA$.
    Furthermore, $\Ynd{\cA}$ is fully faithful, and, for any object $x \in \cA$
    and any functor $F \colon \Op{\cA} \to \Spaces$, there is a natural equivalence
    \[
        \Map{\PSh (\cA)} (\Ynd{\cA}(x), F)
        \simeq Fx.
    \]
\end{theorem}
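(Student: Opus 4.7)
The plan is to proceed in three steps: construct the embedding from the mapping space bifunctor, establish the representability equivalence, and then deduce fully faithfulness as a corollary.

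First, I would construct $\Ynd{\cA}$ from the mapping space bifunctor $\Map{\cA}(-,-) \colon \Op{\cA} \times \cA \to \Spaces$, whose existence follows from the composition construction of Rezk recalled in the preliminaries. Taking its mate under the exponential equivalence $\Fun{\Op{\cA} \times \cA}{\Spaces} \simeq \Fun{\cA}{\PSh(\cA)}$ yields the desired functor, whose value on objects satisfies $\Ynd{\cA}(x)(y) \simeq \Map{\cA}(y,x)$ by construction. Uniqueness up to equivalence is then immediate, since this formula identifies the transpose with the mapping space pairing, which is itself determined by the composition structure on $\cA$.

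The core content is the natural equivalence $\Map{\PSh(\cA)}(\Ynd{\cA}(x), F) \simeq F x$. I would establish this via the straightening/unstraightening correspondence, which identifies $\PSh(\cA)$ with the $\infty$-category of right fibrations over $\cA$. Under this correspondence, the representable $\Ynd{\cA}(x)$ corresponds to the slice projection $\Slice{\cA}{x} \to \cA$, a right fibration whose fiber over $y$ is $\Map{\cA}(y, x)$, while a general presheaf $F$ corresponds to a right fibration $p \colon \cE \to \cA$ whose fiber over $x$ is $F x$. Since $\Slice{\cA}{x} \to \cA$ is the \emph{free} right fibration generated by a lift of the object $x$, maps of right fibrations out of it into $\cE$ are computed by evaluation at this generating lift, giving $\Map{\PSh(\cA)}(\Ynd{\cA}(x), F) \simeq \cE_{x} \simeq F x$ naturally in both arguments.

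Applying this equivalence with $F = \Ynd{\cA}(y)$ yields $\Map{\PSh(\cA)}(\Ynd{\cA}(x), \Ynd{\cA}(y)) \simeq \Ynd{\cA}(y)(x) \simeq \Map{\cA}(x, y)$, and checking that this equivalence is induced by the action of $\Ynd{\cA}$ on mapping spaces delivers fully faithfulness. The main obstacle lies in the middle step: it rests on the straightening theorem together with the identification of the slice projection as the free right fibration on a point, both substantial pieces of foundational machinery. Any proof that avoids straightening must instead deploy an \emph{ad hoc} computation in a model structure (marked simplicial sets, or Cisinski's approach via localizations of cartesian fibrations), and I would expect the cleanest route to simply import the relevant results from \autocite{Cisinski2019} or \autocite{Lurie2009}; once these tools are available, the rest of the argument is formal.
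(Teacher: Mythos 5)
The paper does not actually prove this statement: Theorem~\ref{thm:yoneda-lemma} appears in the preliminaries as imported background, and the sentence immediately following it defers the proof to \autocite[Lemma~5.5.2.1 and Proposition~5.1.3.1]{Lurie2009}. Your outline is a correct sketch of the standard argument as it appears in those references and in Cisinski's book, so there is nothing in the paper to compare it against; I will instead flag the two places where your sketch is lighter than it looks. First, the existence of the two-variable functor $\Map{\cA}(-,-)\colon \Op{\cA}\times\cA\to\Spaces$ is itself the hardest foundational input: the composition map of Rezk recalled in the preliminaries only provides composition on individual mapping spaces, associative up to homotopy, and does not by itself assemble these into a coherent bifunctor --- producing that bifunctor already requires straightening the twisted-arrow left fibration (or an equivalent device), so your first step secretly relies on the same machinery you only invoke in the second step. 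Second, the uniqueness assertion is only meaningful if the formula $\Ynd{\cA}(x)(y)\simeq\Map{\cA}(y,x)$ is read as an equivalence of bifunctors under the exponential adjunction; a purely objectwise identification would not pin down the functor, and your remark that the transpose is ``determined by the composition structure'' is the right idea but is precisely the point that needs the essential uniqueness of the mapping-space bifunctor, not an immediate consequence of it. With those caveats, the core of your argument --- representables correspond under unstraightening to the slice projections $\Slice{\cA}{x}\to\cA$, the slice is the free right fibration generated by the lift $(x,\Id_x)$, and evaluation at this terminal object identifies maps into a right fibration $\cE\to\cA$ with the fiber $\cE_x\simeq Fx$ --- is the argument of Cisinski and is sound, and the deduction of fully faithfulness by specializing $F=\Ynd{\cA}(y)$ and checking compatibility with the action of $\Ynd{\cA}$ on mapping spaces is the standard conclusion.
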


We refer to $\Ynd{\cA}$ as the \textit{covariant Yoneda embedding}; see~\autocite[Lemma~5.5.2.1 and Proposition~5.1.3.1]{Lurie2009} for a proof and further details.
Conversely, the covariant Yoneda embedding applied to the opposite of an $\infty$-category $\cA$ yields a unique (and fully faithful) functor
\[
    \coYnd{\cA} \colon \Op{\cA} \longrightarrow \Fun{\cA}{\Spaces}
\]
such that $\coYnd{\cA}(x) (y) \simeq \Map{\cA} (x, y)$ for all $x, y \in \cA$.
We refer to $\coYnd{\cA}$ as the \textit{contravariant Yoneda embedding}. For every functor $G \colon \cA \to \Spaces$, there is a natural equivalence
\[
    \Map{\sFun{\cA}{\Spaces}} (\coYnd{\cA}(x), G)
    \simeq Gx.
\]
By~\autocite[Proposition 5.1.3.2]{Lurie2009}, the covariant Yoneda embedding $\Ynd{\cA}$ preserves all limits that exist in~$\cA$, and the contravariant Yoneda embedding $\coYnd{\cA}$ sends colimits that exist in $\cA$ to limits.

\subsection{Localizations, accessibility and presentability}

A functor $L \colon \cC \to \cD$ between $\infty$\nobreakdash-cat\-egories is a \textit{reflective localization} if it has a fully faithful right adjoint $J \colon \cD \to \cC$.
Hence, $L$ is a reflective localization if and only if $\cD$ embeds as a reflective subcategory into~$\cC$, that is, for every object $x \in \cC$ there exists an object $x' \in \cD$ and a morphism $r \colon x \to J x'$ such that the pre-composition map
\[
    \Map{\cC} (r, z) \colon \Map{\cC} (J x', z) \longrightarrow \Map{\cC} (x, z)
\]
is an equivalence of $\infty$-groupoids for all $z \in \cD$.
The term \textit{reflective} emphasizes a distinction with a more general concept of localization, used in~\autocite{Anel2022}, of a universal functor from $\cC$ inverting a given class of morphisms~$S$, without necessarily being coaugmented.

Let $S$ be a class of morphisms in an $\infty$-category $\cC$.
An object $z \in \cC$ is \textit{$S$-local} if, for every $f\colon x\to y$
in~$S$, there is an equivalence of $\infty$-groupoids induced by composition with $f$:
\[
    f^* \colon \Map{\cC} (y, z) \overset{\simeq}{\longto} \Map{\cC} (x, z).
\]
We denote by $\Loc (\cC, S)$ the full subcategory of $\cC$ spanned by $S$-local objects.
In general, $\Loc (\cC, S)$ need not be reflective.

Let $\cA$, $\cK$ and $\cI$ be small $\infty$-categories, $\cC$ be an $\infty$-category, and $\kappa$ be a regular cardinal.
An $\infty$-category $\cI$ is \textit{$\kappa$-filtered} if it has at least a cocone for any diagram $D \colon \cK \to \cI$ where $\cK$ is $\kappa$-small.
A diagram $F \colon \cI \to \cC$ where $\cI$ is a $\kappa$-filtered $\infty$-category is called a \textit{$\kappa$-filtered diagram}, and a \textit{$\kappa$-filtered colimit} is a colimit over a $\kappa$-filtered diagram.

An~object $x \in \cC$ is \textit{$\kappa$-compact} if $\coYnd{\cC}(x) \colon \cC \to \Spaces$ preserves $\kappa$-filtered colimits.
We denote by $\cC^{\kappa}$ the full subcategory of $\cC$ spanned by $\kappa$-compact objects.
If $\cC$ is a cocomplete $\infty$-category, we say that a class of objects $G \subseteq \Obj(\cC)$ \textit{generates $\cC$ under ($\kappa$-filtered) colimits} if every object in $\cC$ is the ($\kappa$-filtered) colimit of a diagram with objects in $G$.
For example, for every small $\infty$-category $\cA$, the image of the covariant Yoneda embedding $\Ynd{\cA} \colon \cA \to \PSh (\cA)$ generates $\PSh (\cA)$ under colimits; see~\autocite[Corollary 5.1.5.8]{Lurie2009}.

The $\infty$-category $\Ind_\kappa (\cA)$ is defined as the full subcategory of $\PSh (\cA)$ spanned by those presheaves $F \colon \Op{\cA} \to \Spaces$ which classify right fibrations $\widetilde{\cA} \to \cA$ where $\widetilde{\cA}$ is $\kappa$-filtered \autocite[Definition~5.3.5.1]{Lurie2009}.
It is shown in~\autocite[Corollary 5.3.5.4]{Lurie2009} that $\Ind_\kappa (\cA)$ is
the cocompletion of $\cA$ under $\kappa$-filtered colimits, from which it follows that the full subcategory $\Ind_\kappa(\cA) \subseteq \PSh (\cA)$ is stable under $\kappa$-filtered colimits \autocite[Proposition 5.3.5.3]{Lurie2009}.

An $\infty$-category is \textit{$\kappa$-accessible} if it is equivalent to $\Ind_\kappa (\cA)$ for some regular cardinal $\kappa$ and some small $\infty$-category $\cA$.
Furthermore, a functor $F \colon \cC \to \cD$ is \textit{$\kappa$-accessible} if $\cC$ is $\kappa$-accessible and $F$ preserves $\kappa$-filtered colimits.
We say that an $\infty$-category $\cC$ (resp.\ a functor $F \colon \cC \to \cD$) is \textit{accessible} if it is $\kappa$-accessible for some regular cardinal $\kappa$.
When considering a functor between accessible $\infty$-categories, it can be shown~\autocite[Proposition 5.4.7.7]{Lurie2009} that, if it has a left or right adjoint, then it is itself accessible.
As shown by Lurie in~\autocite[Corollary 5.3.5.4]{Lurie2009} and~\autocite[Proposition 5.4.2.2]{Lurie2009}, the following theorem characterizes accessibility, where the third property is equivalent to the classical definition of accessibility for categories~\autocite{Adamek1994,Makkai1989}:

\begin{theorem}[Characterization of accessibility]%
    \label{thm:accessible}
    Let $\cC$ be an $\infty$-category and $\kappa$ be a regular cardinal.
    Then, the following statements are equivalent:
    \begin{enumerate}[label={\rm(\roman*)}]
        \item $\cC$ is a $\kappa$-accessible $\infty$-category.
        \item $\cC$ is equivalent to the full subcategory of $\PSh (\cA)$ spanned by the $\kappa$-filtered colimits of representable presheaves for some small $\infty$-category $\cA$.
        \item $\cC$ is locally small and admits $\kappa$-filtered colimits, the full subcategory $\cC^{\kappa} \subseteq \cC$ of $\kappa$-compact objects is small, and $\cC^{\kappa}$ generates $\cC$ under $\kappa$-filtered colimits.
    \end{enumerate}
\end{theorem}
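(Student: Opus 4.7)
The plan is to establish the cycle of implications (i) $\Rightarrow$ (ii) $\Rightarrow$ (iii) $\Rightarrow$ (i), using only the definition of $\Ind_\kappa(\cA)$ as the cocompletion of $\cA$ under $\kappa$-filtered colimits together with a careful analysis of $\kappa$-compact objects.

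For (i) $\Rightarrow$ (ii), I would invoke the fact that $\Ind_\kappa(\cA) \subseteq \PSh(\cA)$ is the free cocompletion of $\cA$ under $\kappa$-filtered colimits, is stable under such colimits, and contains all the representables via $\Ynd{\cA}$. Consequently, its essential image consists exactly of $\kappa$-filtered colimits of representables, which identifies $\cC \simeq \Ind_\kappa(\cA)$ with the subcategory described in (ii). For (ii) $\Rightarrow$ (iii), local smallness and the existence of $\kappa$-filtered colimits in $\cC$ are inherited from $\PSh(\cA)$. Every representable $\Ynd{\cA}(a)$ is $\kappa$-compact in $\cC$, since by the Yoneda lemma $\Map{\PSh(\cA)}(\Ynd{\cA}(a),-) \simeq \Ev_a(-)$ preserves all colimits. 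A standard compactness argument then shows that every $\kappa$-compact object of $\cC$ is a retract of a representable: writing such an object as a $\kappa$-filtered colimit of representables, $\kappa$-compactness forces the identity to factor through one of the terms. Hence $\cC^\kappa$ embeds into the idempotent completion of $\cA$ and is therefore essentially small, and it generates $\cC$ under $\kappa$-filtered colimits by construction.

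For (iii) $\Rightarrow$ (i), I set $\cA := \cC^\kappa$ (small by hypothesis) and extend the inclusion $\cA \hookrightarrow \cC$ along the Yoneda embedding $\cA \to \Ind_\kappa(\cA)$ by invoking the universal property of $\Ind_\kappa(\cA)$. This yields a functor $\Phi \colon \Ind_\kappa(\cA) \to \cC$ preserving $\kappa$-filtered colimits. Essential surjectivity of $\Phi$ is immediate from the generation hypothesis, while fully faithfulness reduces, after writing both sides as $\kappa$-filtered colimits of representables and using the stability of mapping spaces under such colimits in the first variable, to the $\kappa$-compactness of the objects of $\cA$ in $\cC$ together with the Yoneda lemma in $\Ind_\kappa(\cA)$.

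The main obstacle I expect is the identification of the $\kappa$-compact objects of $\Ind_\kappa(\cA)$ as retracts of representables, and the parallel computation of mapping spaces that is needed for the fully faithfulness of $\Phi$ in (iii) $\Rightarrow$ (i). Both rely on the interplay between the Yoneda lemma, $\kappa$-compactness, and the fact that $\kappa$-small limits commute with $\kappa$-filtered colimits in $\Spaces$, which is the structural fact that ultimately makes $\Ind_\kappa$ behave correctly. Since these arguments reproduce Lurie's treatment in~\autocite[Corollary~5.3.5.4, Proposition~5.4.2.2]{Lurie2009}, I would simply cite those results rather than reprove them in detail.
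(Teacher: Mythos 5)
The paper gives no proof of this theorem: it is stated as background and attributed directly to Lurie (Corollary 5.3.5.4 and Proposition 5.4.2.2 of \emph{Higher Topos Theory}), which is exactly where your sketch also ends up. Your outline of the cycle (i)~$\Rightarrow$~(ii)~$\Rightarrow$~(iii)~$\Rightarrow$~(i) is the standard argument underlying those results and contains no gaps, so it is correct and takes essentially the same approach as the paper.
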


An $\infty$-category is \textit{$\kappa$-presentable} (resp.\ \textit{presentable}) if it is $\kappa$-accessible (resp.\ accessible) and cocomplete.
We say that a reflective localization $L \colon \cC \to \cD$ is an \textit{accessible} reflective localization if the right adjoint to $L$ is an accessible functor.
For example, any reflective localization between accessible $\infty$-categories is accessible~\autocite[Proposition 5.5.1.2]{Lurie2009}.
As shown by~\textcite{Simpson1999} and~\textcite[Theorem 5.5.1.1]{Lurie2009}, presentability can be characterized in terms of an accessible reflective localization of an $\infty$-category of presheaves:

\begin{theorem}[Characterization of presentability]%
    \label{thm:simpsonlurie}
    Let $\cC$ be an $\infty$-category and $\kappa$ be a regular cardinal.
    Then, the following statements are equivalent:
    \begin{enumerate}[label={\rm(\roman*)}]
        \item $\cC$ is a $\kappa$-presentable $\infty$-category.
        \item $\cC$ is equivalent to $\Ind_\kappa (\cA)$
              for some small $\infty$-category $\cA$ which admits $\kappa$-small colimits.
        \item $\cC$ is equivalent to a $\kappa$-accessible reflective localization of the $\infty$-category of presheaves $\PSh (\cA)$ on some small category $\cA$.
    \end{enumerate}
\end{theorem}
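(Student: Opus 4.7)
My plan is to establish the three equivalences cyclically, $(\mathrm{i})\Rightarrow(\mathrm{ii})\Rightarrow(\mathrm{iii})\Rightarrow(\mathrm{i})$, leveraging Theorem~\ref{thm:accessible} and the universal property of the construction $\Ind_\kappa$.

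For $(\mathrm{i})\Rightarrow(\mathrm{ii})$, I would take $\cA=\cC^{\kappa}$. The first step is to verify that $\cC^{\kappa}$ admits $\kappa$-small colimits: given $D\colon \cK\to \cC^{\kappa}$ with $\cK$ a $\kappa$-small $\infty$-category, its colimit in $\cC$ exists by cocompleteness and remains $\kappa$-compact, because $\kappa$-small limits commute with $\kappa$-filtered colimits in $\Spaces$, so a $\kappa$-small colimit of corepresentables preserving $\kappa$-filtered colimits still preserves them. The inclusion $\cC^{\kappa}\cof \cC$ then extends, by the universal property of $\Ind_\kappa$, to a $\kappa$-filtered-colimit-preserving functor $\Ind_\kappa(\cC^{\kappa})\to \cC$; I would conclude that it is fully faithful and essentially surjective directly from Theorem~\ref{thm:accessible}(iii).

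For $(\mathrm{ii})\Rightarrow(\mathrm{iii})$, assume $\cC\simeq \Ind_\kappa(\cA)$ with $\cA$ admitting $\kappa$-small colimits. I would identify $\Ind_\kappa(\cA)\subseteq \PSh(\cA)$ with the full subcategory of presheaves carrying $\kappa$-small colimits in $\cA$ to limits in $\Spaces$; equivalently, the $S$-local objects for the essentially small class $S$ of canonical maps $\colim_{k\in\cK} \Ynd{\cA}(D(k))\to \Ynd{\cA}(\colim D)$ indexed by $\kappa$-small diagrams $D\colon \cK\to \cA$. This yields a reflective localization via the small-object argument, and it is $\kappa$-accessible because $\Ind_\kappa(\cA)\subseteq \PSh(\cA)$ is stable under $\kappa$-filtered colimits, so the inclusion (the right adjoint) preserves them.

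For $(\mathrm{iii})\Rightarrow(\mathrm{i})$, let $L\colon \PSh(\cA)\to \cC$ be a $\kappa$-accessible reflective localization with right adjoint $R$. Cocompleteness of $\cC$ is standard: colimits are computed by applying $L$ to colimits in $\PSh(\cA)$. For $\kappa$-accessibility, I would use that $\PSh(\cA)$ is itself $\kappa$-presentable, so any $c\in \cC$ has $c\simeq LR(c)\simeq L(\colim_i P_i)\simeq \colim_i LP_i$ where the $P_i$ are $\kappa$-compact in $\PSh(\cA)$ and the colimit is $\kappa$-filtered; the objects $LP_i$ are $\kappa$-compact in $\cC$ because $\Map{\cC}(LP_i,-)\simeq \Map{\PSh(\cA)}(P_i, R(-))$ preserves $\kappa$-filtered colimits, using compactness of $P_i$ together with the $\kappa$-accessibility of the localization.

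The hard part will be the identification in $(\mathrm{ii})\Rightarrow(\mathrm{iii})$ of $\Ind_\kappa(\cA)$ with the full subcategory of presheaves sending $\kappa$-small colimits of $\cA$ to limits in $\Spaces$. In the $1$-categorical setting this reduces to Kelly's flatness characterization, and in the $\infty$-categorical setting it rests on the equivalence between $\kappa$-flat functors and $\kappa$-filtered colimits of representables---precisely what the paper will establish later in Section~\ref{sec:flat-functors}. Absent that tool, one can instead produce the reflection via the adjoint functor theorem from the known presentability of $\Ind_\kappa(\cA)$, as in Lurie's original approach.
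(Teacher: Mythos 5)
The paper does not prove this theorem: it is quoted as background, with the proof attributed to Simpson and to Lurie \autocite[Theorem 5.5.1.1]{Lurie2009}, so there is no in-paper argument to compare against. Your cyclic reconstruction is essentially the standard proof and I see no gap in it. For $(\mathrm{i})\Rightarrow(\mathrm{ii})$ you correctly supply the one extra ingredient beyond Theorem~\ref{thm:accessible}, namely that $\cC^{\kappa}$ is closed under $\kappa$-small colimits via commutation of $\kappa$-small limits with $\kappa$-filtered colimits in $\Spaces$ (Lurie's Corollary 5.3.4.15); full faithfulness of $\Ind_\kappa(\cC^{\kappa})\to\cC$ does need the $\kappa$-compactness of the generators and not just generation, but that is exactly what $\cC^{\kappa}$ provides. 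For $(\mathrm{iii})\Rightarrow(\mathrm{i})$ your argument that $LP_i$ is $\kappa$-compact via the adjunction and $\kappa$-accessibility of $R$ is the right one; you should also note that local smallness of $\cC$ and essential smallness of the collection $\{LP_i\}$ are inherited from $\PSh(\cA)$, so Theorem~\ref{thm:accessible}(iii) applies. You are also right to isolate the identification of $\Ind_\kappa(\cA)$ with the presheaves sending $\kappa$-small colimits of $\cA$ to limits as the genuinely nontrivial input in $(\mathrm{ii})\Rightarrow(\mathrm{iii})$: in this paper that identification is exactly the content of Corollary~\ref{cor:presentable<->Cont} and the discussion preceding it, which rests on Theorem~\ref{thm:flat<->filtcolim}; since those results are proved independently of Theorem~\ref{thm:simpsonlurie}, invoking them here creates no circularity. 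Your fallback via the adjoint functor theorem is closer to Lurie's original route but still needs $\Ind_\kappa(\cA)$ to be closed under limits (or cocomplete plus an accessible limit-preserving inclusion), so it does not entirely avoid that identification either.
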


Examples of presentable $\infty$-categories include $\Spaces$, any $\infty$-topos, and the nerve of any presentable category.
If $\cA$ is a small $\infty$-category and $\cC$ is a presentable $\infty$-category, then $\Fun{\cA}{\cC}$ is presentable~\autocite[Proposition 5.5.3.6]{Lurie2009}\label{prop:presentable-fun}.
In particular, $\PSh (\cA)$ and $\Fun{\cA}{\Spaces}$ are presentable for every small $\infty$\nobreakdash-category $\cA$.
Furthermore, every presentable $\infty$-category is not only cocomplete but also complete~\autocite[Corollary 5.5.2.4]{Lurie2009}, and the Adjoint Functor Theorem (see~\autocite[Corollary 5.5.2.9]{Lurie2009} and \autocite{Nguyen2019}) characterizes right (resp.\ left) adjoints of functors between presentable $\infty$-categories as the ones preserving colimits (resp.\ preserving limits and being accessible).

In addition, presentable $\infty$-categories provide a convenient ambient for localization.
As shown in~\autocite[Propositon 2.2.1]{Anel2022}, every reflective localization induces a reflector which inverts a class of morphisms.
Conversely, if we choose a set of morphism $S$ in a presentable $\infty$-category, then there exists a reflective localization which inverts a class $\bar{S}$ containing~$S$; see~\autocite[Proposition 5.5.4.15]{Lurie2009}.

\section{Higher sketches}%
\label{sec:sketches}

Let $\bD$ be any set of diagrams in an $\infty$-category $\cC$.
We say that $\cC$ is $\bD$-(co)complete if all the diagrams of $\bD$ have a (co)limit in $\cC$.

\begin{definition}
    A \textit{sketch} $\Sigma = (\cA, \bL, \bC)$ is a triple consisting of a small $\infty$-category $\cA$, a set of cones $\bL$ in $\cA$ and a set of cocones $\bC$ in $\cA$.
    Given an $\bL$-complete and $\bC$-cocomplete $\infty$-category $\cC$, a \textit{model} of a sketch $\Sigma$ in $\cC$ is a functor $F \colon \cA \to \cC$ that sends cones of $\bL$ to limit cones in $\cC$ and cocones of $\bC$ to colimit cocones in $\cC$.
\end{definition}

Denote by $\Mod (\Sigma, \cC)$ the $\infty$-category of models of $\Sigma$ in $\cC$, and by $\Mod (\Sigma) = \Mod (\Sigma, \Spaces)$ the $\infty$-category of models of $\Sigma$ in $\Spaces$.
A sketch with $\bC = \emptyset$ is called a \textit{limit sketch}, and one with $\bL = \emptyset$ is called a \textit{colimit sketch}.
If $\cA$ is $\bL$-complete and $\bC$-cocomplete, we say that $\Sigma$ is \textit{normal} if all the cones of $\bL$ are limits and all the cocones of $\bC$ are colimits.
Given a regular cardinal $\kappa$, we say that a sketch is \textit{$\kappa$-small} if all the cones of $\bL$ have $\kappa$-small diagrams.
In particular, we say that a sketch is \textit{finite} when it is $\aleph_0$-small.
We say that an $\infty$-category is \textit{(limit) sketchable} if it is equivalent to $\Mod (\Sigma)$ for some (limit) sketch~$\Sigma$.

The condition for a model of a sketch can be rewritten in terms of inverting certain morphisms.
Let $\cC$ be a complete and cocomplete $\infty$-category, $F \colon \cA \to \cC$ be a functor, and $\Sigma = (\cA, \bL, \bC)$ be a sketch.
Consider a cone $\alpha \colon \cK^\triangleleft \to \cA$ of $\bL$, with cone point $x \in \cA$ and diagram $D \colon \cK \to \cA$, and a cocone $\beta \colon \cH^\triangleright \to \cA$ of $\bC$, with cocone point $y \in \cA$ and diagram $E \colon \cH \to \cA$.
Consider the composites
\[
    F \circ \alpha \colon
    \cK^\triangleleft \to \cC
    \qquad\text{and}\qquad
    F \circ \beta \colon
    \cH^\triangleright \to \cC.
\]

Since $\cC$ is complete and cocomplete, we can take the limit and colimit of $F \circ D$ and $F \circ E$ respectively.
By the universal properties of limits and colimits, there are morphisms
\begin{equation}%
    \label{eq:sketch}
    t_\alpha \colon
    F x
    \longto
    \lim_\cK (F \circ D)
    \qquad\text{and}\qquad
    u_\beta \colon
    \colim_\cH (F \circ E)
    \longto
    F y.
\end{equation}
Then, $t_\alpha$ and $u_\beta$ are isomorphisms for all $\alpha \in \bL$ and $\beta \in \bC$ if and only if $F$ is a model of $\Sigma$.

\begin{proposition}%
    \label{prop:modiscomplete}
    Let $\cC$ be a presentable $\infty$-category.
    If $\Sigma$ is a limit sketch, then $\Mod (\Sigma, \cC)$ is complete.
    Dually, if $\Sigma$ is a colimit sketch, then $\Mod (\Sigma, \cC)$ is cocomplete.
\end{proposition}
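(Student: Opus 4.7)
The plan is to exhibit $\Mod (\Sigma, \cC)$ as a full subcategory of $\Fun{\cA}{\cC}$ that is closed under the relevant class of (co)limits, and then invoke the known (co)completeness of $\Fun{\cA}{\cC}$. Since $\cC$ is presentable, it is in particular both complete and cocomplete, and the same then holds for $\Fun{\cA}{\cC}$ by presentability of functor $\infty$-categories; moreover, the evaluation functors $\Ev_c \colon \Fun{\cA}{\cC} \to \cC$ preserve all limits and colimits, so (co)limits in $\Fun{\cA}{\cC}$ are computed pointwise in $\cC$.

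For the limit sketch case, I would fix a small diagram $G \colon \cJ \to \Mod (\Sigma, \cC)$ and form its pointwise limit $F \simeq \lim_\cJ G$ inside $\Fun{\cA}{\cC}$. To check that $F$ is itself a model, fix a cone $\alpha \in \bL$ with cone point $x \in \cA$ and diagram $D \colon \cK \to \cA$, and verify that the induced map $t_\alpha$ from~\eqref{eq:sketch} is an equivalence. The pointwise formula gives $Fx \simeq \lim_\cJ G(-)(x)$, while commutativity of iterated limits yields
\[
\lim_\cK (F \circ D) \simeq \lim_\cK \lim_\cJ (G(-) \circ D) \simeq \lim_\cJ \lim_\cK (G(-) \circ D).
\]
Since each $G(j)$ is already a model, every componentwise structural map $G(j)(x) \to \lim_\cK (G(j) \circ D)$ is an equivalence, and taking the limit over $j \in \cJ$ identifies $t_\alpha$ with an equivalence.

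The colimit sketch case proceeds by the formally dual argument, using the pointwise formula for colimits in $\Fun{\cA}{\cC}$ together with the fact that colimits commute with colimits. For $\beta \in \bC$ with cocone point $y \in \cA$ and diagram $E \colon \cH \to \cA$, this produces an equivalence
\[
\colim_\cH (F \circ E) \simeq \colim_\cJ \colim_\cH (G(-) \circ E) \simeq \colim_\cJ G(-)(y) \simeq F y,
\]
which one checks is the structural map $u_\beta$ of the pointwise colimit $F \simeq \colim_\cJ G$.

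The main subtlety, rather than a genuine obstacle, is bookkeeping naturality: one must check that the equivalence produced by taking the (co)limit over $\cJ$ of the componentwise equivalences is literally the structural map $t_\alpha$ (resp.\ $u_\beta$) attached to the pointwise (co)limit $F$. This follows from the universal-property characterization of $t_\alpha$ in terms of the cone $F \circ \alpha$, which by the pointwise formula is the limit over $\cJ$ of the cones $G(j) \circ \alpha$; nevertheless, it is the one place in the argument where some care with higher-categorical coherences is required.
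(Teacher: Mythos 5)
Your proposal is correct and follows essentially the same route as the paper's proof: form the limit in $\Fun{\cA}{\cC}$ (which is complete by presentability of $\cC$), then use pointwise computation of limits via the evaluation functors together with commutation of limits with limits to reduce the model condition for the limit to the model condition for each term of the diagram. The naturality bookkeeping you flag at the end is indeed the one point the paper passes over silently, but it is handled exactly as you indicate, via the universal-property characterization of $t_\alpha$.
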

\begin{proof}
    Let $\Sigma = (\cA, \bL)$ be a limit sketch.
    Since $\cC$ is presentable, $\Fun{\cA}{\cC}$ is complete.
    Thus, for any diagram $F \colon \cI \to \Mod (\Sigma, \cC)$ of models of $\Sigma$, there is a limit $\lim_\cI F \in \Fun{\cA}{\cC}$.
    We want to show that $\lim_\cI F$ is in fact also a model of $\Sigma$.
    By~\eqref{eq:sketch}, $\lim_\cI F \in \Mod (\Sigma, \cC)$ if and only if, for every cone $\alpha \colon \cK^\triangleleft \to \cA$ of $\cL$, with cone point $x \in \cA$ and diagram $D \colon \cK \to \cA$, the morphism
    \begin{equation}%
        \label{eq:previous}
        (\lim_\cI F) \, x
        \longto
        \lim_\cK ((\lim_\cI F) \circ D)
    \end{equation}
    is an isomorphism.
    Since $\cK$ is small and $\cC$ is complete, $(- \circ D)$ is a right adjoint and preserves limits.
    Using that the evaluation functor also preserves limits, and that limits commute with limits, \eqref{eq:previous} is an isomorphism if and only if
    \begin{equation}%
        \label{eq:2nd_previous}
        \lim_{i \in \cI} (F(i) x)
        \longto
        \lim_{i \in \cI} \, \lim_\cK ( F(i) \circ D)
    \end{equation}
    is an isomorphism in~$\cC$.
    Thus, since each $F(i)$ is a model of $\Sigma$, the equivalences in~\eqref{eq:sketch} hold for every $F(i)$, and~\eqref{eq:2nd_previous} is an isomorphism, as we wanted to show.
    If $\Sigma$ is a colimit sketch, the same argument follows using the fact that $\cC$ is cocomplete.
\end{proof}

\begin{example}[Morphisms]
    Let $\Sigma = (\sDelta{1}, \emptyset)$ be the trivial limit sketch over $\sDelta{1}$.
    A model $F \colon \sDelta{1} \to \cC$ of $\Sigma$ in any $\infty$-category $\cC$ exhibits a morphism of $\cC$.
    Therefore, $\Mod(\Sigma, \cC) = \Fun{\sDelta{1}}{ \cC}$ is the $\infty$-category of morphisms in~$\cC$.
    The same construction can be carried out with any small $\infty$-category~$\cA$; hence $\Mod((\cA, \emptyset), \cC) = \Fun{\cA}{\cC}$ is limit-sketchable.
\end{example}

\begin{example}[Pointed objects]
    Let $f \colon 0 \to 1$ be the generating morphism of $\sDelta{1}$, and let $\cC$ be an $\infty$-category with terminal object $1_\cC$.
    Let $\Sigma = (\sDelta{1}, \bL)$ be the limit sketch with set of cones $\bL$ consisting of the empty diagram $D \colon \emptyset \to \sDelta{1}$, cone point $0$, and the unique natural transformation $\alpha \colon \const 0 \Rightarrow D$.
    A model $F \colon \sDelta{1} \to \cC$ of $\Sigma$ in $\cC$ sends $f$ to a morphism $F(f) \colon F(0) \to F(1)$, and it also sends the only cone of $\bL$ to a limit cone of the diagram $F \circ D \colon \emptyset \to \cC$.
    It follows that $F(0) \cong \lim (F \circ D) \cong 1_\cC$.
    Therefore, each model $F$ exhibits an object $F(1) \in \cC$ as a pointed object $F(f) \colon 1_\cC \to F(1)$ of $\cC$, and $\Mod(\Sigma, \cC)$ can be viewed as the $\infty$-category of pointed objects of~$\cC$.
    In particular, $\Mod(\Sigma,\cS)$ is the $\infty$-category of pointed spaces.
\end{example}

\begin{example}[Pullback diagrams]
    Let $\cA$ be the nerve of the small category generated by a commutative square
    \[
        \begin{tikzcd}[ampersand replacement=\&]
            3 \& 2 \\
            1 \& 0\rlap{.}
            \arrow[from=1-1, to=2-1]
            \arrow[from=1-1, to=1-2]
            \arrow[from=1-2, to=2-2]
            \arrow[from=2-1, to=2-2]
        \end{tikzcd}
    \]
    Consider a limit sketch $\Sigma = (\cA, \bL)$ with set of cones $\bL$ consisting of the inclusion diagram $D \colon \{1 \to 0 \leftarrow 2\} \to \cA$, cone point $3 \in \cA$, and the natural transformation $\alpha \colon \const 3 \Rightarrow D$.
    A~model $F \colon \cA \to \cC$ in a complete $\infty$-category $\cC$ for the sketch $\Sigma$ sends
    \[
        \begin{tikzcd}[ampersand replacement=\&,cramped]
            3 \& 2 \\
            1 \& 0
            \arrow[from=1-1, to=2-1]
            \arrow[from=1-1, to=1-2]
            \arrow[from=1-2, to=2-2]
            \arrow[from=2-1, to=2-2]
        \end{tikzcd}
        \begin{tikzcd}[ampersand replacement=\&,cramped]
            {} \&\& {}
            \arrow["F", shorten <=10pt, shorten >=10pt, maps to, from=1-1, to=1-3]
        \end{tikzcd}
        \begin{tikzcd}[ampersand replacement=\&,cramped]
            {x_3} \& {x_2} \\
            {x_1} \& {x_0}
            \arrow[from=2-1, to=2-2]
            \arrow[from=1-2, to=2-2]
            \arrow[from=1-1, to=1-2]
            \arrow[from=1-1, to=2-1]
            \arrow["\lrcorner"{anchor=center, pos=0.125}, draw=none, from=1-1, to=2-2]
        \end{tikzcd}
    \]
    where the right-hand commutative square is a pullback diagram.
    Therefore, each model of $\Sigma$ corresponds to a pullback diagram in~$\cC$.
\end{example}

\begin{example}[Pre-spectrum and spectrum objects]
    Let $\cA$ be the nerve of the small category with objects $\N \sqcup (\N \times \{0, 1\})$ and generating morphisms $f_{i,j} \colon i \to (i+1, j)$ and $g_{i,j} \colon (i, j) \to i$
    for every $i \in \N$ and $j \in \{0, 1\}$, i.e., the category of the following shape:
    \[
        \begin{tikzcd}[ampersand replacement=\&]
            {(0,0)} \&\& {(1,0)} \&\& {(2,0)} \\
            \& 0 \&\& 1 \&\& 2 \& \cdots \\
            {(0,1)} \&\& {(1,1)} \&\& {(2,1)}
            \arrow[from=3-1, to=2-2]
            \arrow[from=1-1, to=2-2]
            \arrow[from=2-2, to=1-3]
            \arrow[from=1-3, to=2-4]
            \arrow[from=2-2, to=3-3]
            \arrow[from=3-3, to=2-4]
            \arrow[from=2-4, to=1-5]
            \arrow[from=1-5, to=2-6]
            \arrow[from=2-4, to=3-5]
            \arrow[from=3-5, to=2-6]
        \end{tikzcd}
    \]

    Consider the limit sketch $\Sigma = (\cA, \bL)$ with set of cones $\bL$ consisting of, for each $i \in \N$ and $j \in \{0,1\}$, the unique cone of the empty diagram and cone point $(i,j) \in \cA$.
    A~model $F \colon \cA \to \cC$ in a complete $\infty$-category $\cC$ for the sketch $\Sigma$ is a diagram
    \[
        \begin{tikzcd}[ampersand replacement=\&]
            {1_\cC} \&\& {1_\cC} \&\& {1_\cC} \\
            \& {x_0} \&\& {x_1} \&\& {x_2} \& \cdots \\
            {1_\cC} \&\& {1_\cC} \&\& {1_\cC}
            \arrow[from=3-1, to=2-2]
            \arrow[from=1-1, to=2-2]
            \arrow[from=2-2, to=1-3]
            \arrow[from=1-3, to=2-4]
            \arrow[from=2-2, to=3-3]
            \arrow[from=3-3, to=2-4]
            \arrow[from=2-4, to=1-5]
            \arrow[from=1-5, to=2-6]
            \arrow[from=2-4, to=3-5]
            \arrow[from=3-5, to=2-6]
        \end{tikzcd}
    \]
    where each $(i,j)$ is replaced by the terminal object $1_\cC$ of $\cC$, and a sequence of objects $x_n \in \cC$ is selected.
    Giving a model of $\Sigma$ amounts to choosing pointed objects $1_\cC \to x_n$ for all $n \in \N$ and morphisms $x_n \to \Omega x_{n+1}$ (by the universal property of the pullback):
    \[
        \begin{tikzcd}[ampersand replacement=\&]
            \&\& 1_\cC \\
            {x_n} \& {\Omega x_{n+1}} \&\& {x_{n+1}.} \\
            \&\& 1_\cC
            \arrow[dashed, from=2-2, to=3-3]
            \arrow[dashed, from=2-2, to=1-3]
            \arrow[dashed, from=2-1, to=2-2]
            \arrow[from=2-1, to=3-3]
            \arrow[from=2-1, to=1-3]
            \arrow[from=3-3, to=2-4]
            \arrow[from=1-3, to=2-4]
            \arrow["\lrcorner"{anchor=center, pos=0.125, rotate=45}, draw=none, from=2-2, to=2-4]
        \end{tikzcd}
    \]
    Hence, each model of $\Sigma$ is, by definition, a \textit{pre-spectrum object}, and $\Mod(\Sigma, \cC)$ is the $\infty$-category of pre-spectrum objects in~$\cC$.

    If we want to obtain spectrum objects, we need to add more cones to~$\Sigma$.
    Consider a limit sketch $\Sigma' =  (\cA, \bL \sqcup \bL')$ where $\bL'$ consists of, for each $n \in \N$, a cone
    \[
        \begin{tikzcd}[ampersand replacement=\&,cramped]
            \& {(n+1,0)} \\
            {n} \&\& n+1 \\
            \& {(n+1, 1)}
            \arrow[from=1-2, to=2-3]
            \arrow[from=2-1, to=1-2]
            \arrow[from=2-1, to=3-2]
            \arrow[from=3-2, to=2-3]
        \end{tikzcd}
    \]
    where $n$ is the cone point and $(n+1, 0) \to n+1 \leftarrow (n+1, 1)$ is the corresponding diagram.
    A~model for $\Sigma'$ is a pre-spectrum object in $\cC$ such that
    \[
        x_n \cong \text{ pullback of } \{1_\cC \to x_{n+1} \leftarrow 1_\cC\} \cong \Omega x_{n+1}.
    \]
    Thus, a model for $\Sigma'$ is a spectrum object, and  $\Mod (\Sigma', \cC)$ is the $\infty$-category of spectrum objects in~$\cC$.
    In particular, $\Mod (\Sigma',\cS)$ is the $\infty$-category of spectra $\Spectra$.
\end{example}

In the following examples, we view categories such as the simplex category $\Deltabf$, the category $\Gammabf$ of finite pointed sets and pointed maps, or the tree category $\Omegabf$ defined in [8, \S~3.2] as $\infty$-categories by passing to their respective nerves.

\begin{example}[Pre-category objects and Segal spaces]%
    \label{ex:cat-object}
    Let \(\alpha_n\) be the following cone over \(\Op{\Delta}\) with cone point $[n]$:
    \[
        \begin{tikzcd}[ampersand replacement=\&,cramped,column sep=small,row sep=scriptsize]
            \&\&\&\& {[n]} \\
            \\
            {[1]} \&\& {[1]} \&\& {[1]} \&[2em]\& {[1]} \&\& {[1]} \\
            \& {[0]} \&\& {[0]} \&\&\&\& {[0]}
            \arrow[curve={height=23pt}, dashed, from=1-5, to=3-1]
            \arrow[curve={height=18pt}, dashed, from=1-5, to=3-3]
            \arrow[dashed, from=1-5, to=3-5]
            \arrow[curve={height=-18pt}, dashed, from=1-5, to=3-7]
            \arrow[curve={height=-23pt}, dashed, from=1-5, to=3-9]
            \arrow["{\dFace{1}{0}}"', from=3-1, to=4-2]
            \arrow["{\dFace{1}{1}}", from=3-3, to=4-2]
            \arrow["{\dFace{1}{0}}"', from=3-3, to=4-4]
            \arrow[""{name=0, anchor=center, inner sep=0}, "{\dFace{1}{1}}", from=3-5, to=4-4]
            \arrow[""{name=1, anchor=center, inner sep=0}, "{\dFace{1}{0}}"', from=3-7, to=4-8]
            \arrow["{\dFace{1}{1}}", from=3-9, to=4-8]
            \arrow["\cdots"{description}, draw=none, from=0, to=1]
        \end{tikzcd}
    \]

    Consider the limit sketch \(\Sigma = (\Op{\Deltabf}, {\{\alpha_n\}}_{n \in \N})\).
    A model $F \colon \Op{\Deltabf} \to \cC$ of $\Sigma$ in a complete $\infty$-cat\-egory $\cC$ is a simplicial object in $\cC$ equipped with isomorphisms
    \[
        F_n \overset{\simeq}{\longto} F_1 \times_{F_0} F_1 \times_{F_0} \cdots \times_{F_0} F_1
    \]
    for all~$n$. Therefore, $\Mod(\Sigma, \cC)$ is the $\infty$-category of \textit{pre-category} objects in~$\cC$.
    If $\cC=\Spaces$, then $\Mod(\Sigma)$ is the $\infty$-category of Segal spaces.
\end{example}

\begin{example}[Univalent category objects and complete Segal spaces]%
    \label{ex:comsegal}
    Let $\cC$ be a complete $\infty$-category and $\Sigma_{\rm C} = (\Op{\Deltabf}, \bL_{\rm C})$ be the sketch of~\autoref{ex:cat-object}.
    By the characterization found in~\autocite[\S\,5.5]{Rezk2000} and~\autocite[Proposition 6.4]{Rezk2000}, a Segal space $F$ is a complete Segal space if and only if the following is a pullback square in $\Spaces$:
    \begin{equation}\label{eq:ex-CSS-lim}\begin{tikzcd}[ampersand replacement=\&]
            {F_0} \& {F_3} \\
            {F_1} \& {F_1 \times_{F_0}^{d_1,\,d_1} F_1 \times_{F_0}^{d_0,\,d_0} F_1}
            \arrow[from=1-1, to=1-2]
            \arrow[from=1-1, to=2-1]
            \arrow["f", from=2-1, to=2-2]
            \arrow["g", from=1-2, to=2-2]
            \arrow["\lrcorner"{anchor=center, pos=0.125}, draw=none, from=1-1, to=2-2]
        \end{tikzcd}\end{equation}
    where $f = (s_0d_0,\,{\rm id}_{F_1},\,s_0d_1)$ and $g = (d_1d_3,\,d_0d_3,\,d_1d_0)$.

    Define a sketch $\Sigma = (\Op{\Deltabf}, \bL)$ where $\bL$ is the union of $\bL_{\rm C}$ with the cone represented by the following diagram:
    \begin{equation}\label{eq:ex-CSS-diag}\begin{tikzcd}[ampersand replacement=\&]
            \&\& {[0]} \\
            {[1]} \&\&\&\& {[3]} \\
            \\
            {[1]} \&\& {[1]} \&\& {[1]} \\
            \& {[0]} \&\& {[0]}
            \arrow[from=1-3, to=2-5]
            \arrow[from=1-3, to=2-1]
            \arrow["{\dFace{1}{1}}"', from=4-1, to=5-2]
            \arrow["{\dFace{1}{1}}", from=4-3, to=5-2]
            \arrow["{\dDege{1}{0}\dFace{1}{0}}"{description}, from=2-1, to=4-1]
            \arrow["{\Id_{[1]}}"{description, pos=0.4}, from=2-1, to=4-3]
            \arrow["{\dDege{1}{0}\dFace{1}{1}}"{description, pos=0.3}, from=2-1, to=4-5]
            \arrow["{\dFace{3}{1}\dFace{3}{3}}"{description, pos=0.3}, from=2-5, to=4-1]
            \arrow["{\dFace{3}{0}\dFace{3}{3}}"{description, pos=0.4}, from=2-5, to=4-3]
            \arrow["{\dFace{3}{1}\dFace{3}{0}}"{description}, from=2-5, to=4-5]
            \arrow["{\dFace{1}{0}}"', from=4-3, to=5-4]
            \arrow["{\dFace{1}{0}}", from=4-5, to=5-4]
        \end{tikzcd}\end{equation}
    with cone point $[0]$.
    A model $F \colon \Op{\Deltabf} \to \cC$ of $\Sigma$ exhibits a pre-category object in $\cC$ and the image of~\eqref{eq:ex-CSS-diag} is a limit cone, which is equivalent to the pullback square~\eqref{eq:ex-CSS-lim}.
    Therefore, $\Mod (\Sigma, \cC)$ is the $\infty$-category of \emph{univalent category objects} in~$\cC$.
    In the particular case when $\cC = \Spaces$, we have that $\Mod (\Sigma)$ is the $\infty$-category of complete Segal spaces.
\end{example}

\begin{example}[Monoid objects and $A_\infty$-spaces/rings]%
    \label{ex:monoid-object}
    Let $\cC$ be a complete $\infty$-category and $\Sigma_{\rm C} = (\Op{\Deltabf}, \bL_{\rm C})$ be the sketch of~\autoref{ex:cat-object}.
    Define a sketch $\Sigma = (\Op{\Deltabf}, \bL)$ where $\bL$ is the union of $\bL_{\rm C}$ and a cone with empty diagram and cone point~$[0]$.
    Each model of $\Sigma$ is a pre-category object $F$ in $\cC$ such that $F_0 \cong 1_\cC$.
    Hence, $\Mod (\Sigma, \cC)$ is the $\infty$-category of monoid objects in~$\cC$.
    In the case when $\cC = \Spaces$, we have that $\Mod (\Sigma)$ is the $\infty$-category of $A_\infty$-spaces.
    If $\cC = \Spectra$, then $\Mod (\Sigma, \Spectra)$ is the $\infty$-category of $A_\infty$-ring spectra.
\end{example}

\begin{example}[Groupoid objects]%
    \label{ex:groupoids}
    Let $\cC$ be a complete $\infty$-category and $\Sigma = (\Op{\Deltabf}, \bL_{\rm C})$ be the sketch of~\autoref{ex:cat-object}.
    Define a sketch $\Sigma = (\Op{\Deltabf}, \bL)$ where $\bL$ is the union of $\bL_{\rm C}$ with a diagram
    \[
        D \colon \{1 \to 0 \leftarrow 2\} \longto   \Op{\Deltabf}
    \]
    sending $1 \to 0 \leftarrow 2$ to $[1]          \xrightarrow{\dFace{1}{0}} [0] \xleftarrow{\dFace{1}{0}} [1]$
    and a natural transformation $\alpha$ with cone point $[2]$ defining the following commutative square:
    \[
        \begin{tikzcd}[ampersand replacement=\&]
            {[2]} \& {[1]} \\
            {[1]} \& {[0]\rlap{.}}
            \arrow["{\dFace{1}{0}}"', from=2-1, to=2-2]
            \arrow["{\dFace{1}{0}}", from=1-2, to=2-2]
            \arrow["{\dFace{2}{0}}"', from=1-1, to=2-1]
            \arrow["{\dFace{2}{1}}", from=1-1, to=1-2]
        \end{tikzcd}
    \]
    A model of $\Sigma$ defines a pre-category object and sends these squares to pullback squares.
    Therefore, $\Mod (\Sigma, \cC)$ is the $\infty$-category of groupoid objects in~$\cC$.
\end{example}

\begin{example}[Group objects and grouplike $A_\infty$-spaces]
    Following the construction used in~\autoref{ex:monoid-object} but replacing the sketch of pre-categories with the one of groupoids, it follows that $\Mod (\Sigma, \cC)$ is the $\infty$-category of group objects in~$\cC$.
    In the particular case when $\cC = \Spaces$, we have that $\Mod (\Sigma)$ is the $\infty$-category of grouplike $A_\infty$-spaces.
\end{example}

\begin{example}[Commutative monoid objects and $E_\infty$-spaces/rings]%
    \label{ex:Einfspaces}
    Let $\Gammabf$ be the category of finite pointed sets and pointed maps, where every object is isomorphic to a set $[n]$ pointed by $0 \in [n]$.
    For each $1 \leq k \leq n$, there is a pointed map $\delta_k \colon [n] \to [1]$ defined by
    \[
        \delta_k (i) =
        \begin{cases}
            1 & \text{ if } i = k,    \\
            0 & \text{ if } i \neq k.
        \end{cases}
    \]
    A $\Gammabf$-object in an $\infty$-category $\cC$ is a functor $E \colon \Gammabf \to \cC$.
    If $\cC$ has finite products, we can take the product of the morphisms $E(\delta_k) \colon E_n \to E_1$, which we denote by
    \[
        p_n \colon E_n \longrightarrow \prod_{k=1}^n E_1.
    \]
    By definition, $E$ is a commutative monoid object if $p_n$ is invertible for every $n \geq 0$.

    Consider a sketch $\Sigma = (\Gammabf, \bL)$,
    where the set of cones $\bL$ consists of, for each $n \in \N$, a diagram
    \[
        D_n \colon \bigsqcup_{k=1}^n \{k\} \longto \Gammabf
    \]
    sending $k \mapsto [1]$ for all~$k$, cone point $[n] \in \Gammabf$, and the natural transformation $\delta_\bullet^n \colon \const [n] \Rightarrow D_n$ induced by $\delta_k$ at each object~$k$.
    Therefore, $\Mod (\Sigma, \cC)$ is the $\infty$-category of commutative monoid objects in~$\cC$.
    If $\cC = \Spaces$, then $\Mod (\Sigma)$ is the $\infty$-category of $E_\infty$-spaces,
    and if $\cC = \Spectra$, then $\Mod (\Sigma, \Spectra)$ is the $\infty$-category of $E_\infty$-ring spectra.
\end{example}

\begin{example}[Abelian group objects and infinite loop spaces]
    Let $\sSet$ denote the category of simplicial sets, and $\cC$ be a complete $\infty$-category.
    Consider the functor
    \[
        i \colon \Op{\Deltabf} \longto \Gammabf
    \]
    sending $[n]\mapsto \Hom{\sSet_*} (\sDelta{n}_+, S^1)$,
    where $S^1$ is the pointed simplicial circle $\sDelta{1} / \partial\sDelta{1}$.
    Since $\cC$ is complete, the map
    \[
        i^* \colon \Fun{\Gammabf}{\cC} \longrightarrow \Fun{\Op{\Deltabf}}{\cC}
    \]
    sends every commutative monoid object $E \in \Mod(S, \cC) \subseteq \Fun{\Gammabf}{\cC}$ to its underlying monoid $i^* E \colon \Op{\Deltabf} \to \cC$.
    We say that $E$ is an \textit{abelian group object} if $i^* E$ is a group.

    Let $\Sigma_{\rm cM} = (\Gammabf, \bL_{\rm cM})$ be the sketch of~\autoref{ex:Einfspaces}, and $\alpha_{\rm Gpd}$ be the cone added in~\autoref{ex:groupoids}.
    Define a sketch $\Sigma = (\Gammabf, \bL_{\rm cM} \sqcup \{i \circ \alpha_{\rm Gpd}\})$, where the cone $i \circ \alpha_{\rm Gpd}$ with cone point $i[2]$ corresponds to the following commutative square:
    \[
        \begin{tikzcd}[ampersand replacement=\&]
            {i [2]} \& {i [1]} \\
            {i [1]} \& {i [0]\rlap{.}}
            \arrow["{i (\dFace{1}{0})}"', from=2-1, to=2-2]
            \arrow["{i (\dFace{1}{0})}", from=1-2, to=2-2]
            \arrow["{i (\dFace{2}{0})}"', from=1-1, to=2-1]
            \arrow["{i (\dFace{2}{1})}", from=1-1, to=1-2]
        \end{tikzcd}
    \]
    A model of $\Sigma$ defines an abelian group object by sending these squares to pullback squares.
    Therefore, $\Mod (\Sigma, \cC)$ is the $\infty$-category of abelian group objects in~$\cC$.
    In the particular case when $\cC = \Spaces$, we have that $\Mod (\Sigma)$ is the $\infty$-category of infinite loop spaces.
\end{example}

\begin{example}[Dendroidal Segal spaces]%
    \label{ex:dendroidal}
    Let $\Omegabf$ be the tree category of Moerdijk--Weiss~\autocite[\S\,3.2]{Heuts2022}.
    Given two trees $T_1$ and $T_2$ sharing an edge $e$ which is a leaf of~$T_1$ and the root of~$T_2$, the \emph{grafting} $T_1 \cup_e T_2$ is the pushout of $T_1$ and $T_2$ along the common edge~$e$.

    Define a limit sketch $\Sigma = (\Op{\Omegabf}, \bL)$ with the set $\bL$ consisting of, for each tree $T \in \Omegabf$ and each decomposition of $T$ as a grafting of subtrees $T = T_1 \circ_e T_2$, a cone with cone point $T$ represented by the following pushout in $\Omegabf$:
    \[
        \begin{tikzcd}[ampersand replacement=\&,cramped]
            \eta \& {T_1} \\
            {T_2} \& T\rlap{.}
            \arrow["e"', from=1-1, to=2-1]
            \arrow[dashed, from=2-1, to=2-2]
            \arrow[dashed, from=1-2, to=2-2]
            \arrow["e", from=1-1, to=1-2]
            \arrow["\ulcorner"{anchor=center, pos=0.175}, draw=none, from=2-2, to=1-1]
        \end{tikzcd}
    \]
    A model for the sketch $\Sigma$ is equivalent to a dendroidal space $X \colon \Op{\Omegabf} \to \Spaces$ such that the squares of the form
    \[
        \begin{tikzcd}[ampersand replacement=\&]
            {X(T)} \& {X(T_1)} \\
            {X(T_2)} \& {X(\eta)}
            \arrow["{e^*}"', from=2-1, to=2-2]
            \arrow[from=1-1, to=2-1]
            \arrow[from=1-1, to=1-2]
            \arrow["\lrcorner"{anchor=center, pos=0.125}, draw=none, from=1-1, to=2-2]
            \arrow["{e^*}", from=1-2, to=2-2]
        \end{tikzcd}
    \]
    are pullbacks for any tree $T$ and any decomposition of $T$ as a grafting of subtrees $T = T_1 \circ_e T_2$.
    By~\autocite[Lemma 12.7]{Heuts2022}, this condition is equivalent to claiming that $X$ is a dendroidal Segal space, and hence $\Mod(\Sigma)$ is the $\infty$-category of dendroidal Segal spaces.
\end{example}

\begin{example}[Complete dendroidal Segal spaces]%
    \label{ex:oper-object}
    Consider the inclusion $j \colon \Op{\Deltabf} \to \Op{\Omegabf}$ sending $[n]$ to the linear tree $L_n$.
    The induced map
    \[
        j^* \colon \Fun{\Op{\Omegabf}}{\Spaces} \longrightarrow \Fun{\Op{\Deltabf}}{\Spaces}
    \]
    sends every dendroidal space $X$ to its underlying simplicial space $j^* X$.
    By~\autocite[Remark 12.15]{Heuts2022}, a dendroidal Segal space $X \colon \Op{\Omegabf} \to \Spaces$ is complete if and only if its underlying simplicial space $j^* X$ is complete.

    Let $\Sigma_{{\rm dS}} = (\Op{\Omegabf}, \bL_{{\rm dS}})$ be the sketch of~\autoref{ex:dendroidal}, which models dendroidal Segal spaces, and let $(D, [0], \alpha)$ be the cone added in~\autoref{ex:comsegal}, which models the completeness condition on simplicial spaces.
    Define a sketch $\Sigma = (\Op{\Omegabf}, \bL)$ where $\bL$ is the union of $\bL_{{\rm dS}}$ and a cone consisting of a diagram $j \circ D$ and a natural transformation $j \circ \alpha$ with cone point $j [0] = L_0 \in \Op{\Omegabf}$.
    A~model of $\Sigma$ is a dendroidal space $X \colon \Op{\Omegabf} \to \Spaces$ such that the map
    \[
        (j^* X) [0] = X j [0] \longto \lim (X \circ j \circ D) = \lim ((j^* X) \circ D)
    \]
    is an isomorphism.
    This condition is equivalent to imposing that the underlying simplicial space $j^* X$ be complete, according to \autoref{ex:comsegal}.
    Hence, $X$ is a model of $\Sigma$ if and only if it is a complete dendroidal Segal space, and $\Mod (\Sigma)$ is the $\infty$-category of complete dendroidal Segal spaces.
\end{example}

In all the examples of limit sketches given so far, the small $\infty$-category associated to the limit sketch is in fact the nerve of a small category.
The following example of higher sheaves has any small $\infty$-category as the base of the corresponding limit sketch:

\begin{example}[Higher sheaves]\label{ex:sheaves}
    Let $\cA$ be a small $\infty$-category, and let $\Slice{\cA}{x}$ denote the slice category over an object $x \in \cA$.
    A \textit{sieve} on an object $x \in \cA$ is a full subcategory $\cD_x \subseteq \Slice{\cA}{x}$ closed under precomposition with morphisms in $\Slice{\cA}{x}$.
    For $S$ a sieve on $x \in \cA$ and $f \colon y \to x$ a morphism, the pullback sieve $f^* S$ on $y$ is the sieve spanned by the morphisms into $y$ that become equivalent to a morphism in $S$ after composition with~$f$.

    A \textit{Grothendieck topology} $\cT$ on an $\infty$-category $\cA$, as defined in~\autocite[\S\,6.2.2]{Lurie2009}, is an assignment to each object $x \in \cA$ of a collection $\cT_x$ of sieves on $x$, called \textit{covering sieves}, such that:
    \begin{enumerate}
        \item For each $x \in \cA$, the trivial sieve $\Slice{\cA}{x} \subseteq \Slice{\cA}{x}$ on $x$ is a covering sieve.
        \item If $S$ is a covering sieve on $x$ and $f \colon y \to x$ is a morphism, then the pullback sieve $f^* S$ is a covering sieve on~$y$.
        \item For a covering sieve $S$ on $x$ and any sieve $R$ on~$x$, if the pullback sieve $f^* R$ is covering for every $f \in S$, then $R$ itself is covering.
    \end{enumerate}

    By~\autocite[Proposition 6.2.2.5]{Lurie2009}, there is a natural bijection between sieves on $x$ in $\cA$ and equivalence classes of monomorphisms $U \to \Ynd{\cA}(x)$ in $\PSh (\cA)$, where $\Ynd{\cA}$ is the Yoneda functor, as in Theorem~\ref{thm:yoneda-lemma}, and a morphism $U\to V$ is a \emph{monomorphism} if it is a $(-1)$-truncated object of $\Slice{\PSh (\cA)}{V}$.

    Let $S(\cT)$ be the class of monomorphisms in $\PSh (\cA)$ corresponding to the covering sieves of~$\cT$.
    A~presheaf $F \in \PSh (\cA)$ is a \textit{sheaf} with Grothendieck topology $\cT$ if it is an $S(\cT)$-local object, i.e., if for every morphism $f \colon U \to \Ynd{\cA}(x)$ in~$S(\cT)$, the map
    \[
        F x
        \simeq \Map{\PSh (\cA)}(\Ynd{\cA}(x), F)
        \longto \Map{\PSh (\cA)}(U, F)
    \]
    is an equivalence.

    Recall that any presheaf $U \in \PSh (\cA)$ can be expressed as a canonical colimit $\colim \Ynd{\cA} \circ \pi$, where $\pi \colon \hRelSlice{\cA}{U} \to \cA$ is the associated forgetful functor.
    Let $\Sigma = (\Op{\cA}, \bL)$ be a limit sketch where the set $\bL$ consists of, for each covering sieve with corresponding monomorphism $f \colon U \to \Ynd{\cA} (x)$, a cone with diagram $\Op{\pi} \colon \Op{\hRelSlice{\cA}{U}} \to \Op{\cA}$, cone point $x \in \cA$ and natural transformation given by~$\Op{f}$.
    A~model of $\Sigma$ is a presheaf $F \colon \Op{\cA} \to \Spaces$ such that
    \[\begin{aligned}
            Fx
            \longto \lim_{\Op{(\hRelSlice{\cA}{U})}} F \circ \Op{\pi}
             & \simeq \lim_{\hRelSlice{\cA}{U}} \Map{\PSh (\cA)}(\coYnd{\cA} \circ \pi, \, F)         \\
             & \simeq \Map{\PSh (\cA)}\Big(\colim_{\hRelSlice{\cA}{U}} \Ynd{\cA} \circ \pi, \, F\Big) \\
             & \simeq \Map{\PSh (\cA)}(U, F)
        \end{aligned}\]
    is an equivalence for every $f \colon U \to \Ynd{\cA} (x)$, i.e., $F$ is a sheaf with Grothendieck topology $\cT$.
    Hence, $\Mod(\Sigma)$ is equivalent to the $\infty$-category $\Sh(\cA, \cT)$ of sheaves on $\cA$ with Grothendieck topology~$\cT$.

    This is not the only way to present the $\infty$-category of sheaves by means of a limit sketch.
    If we assume that $\cA$ has pullbacks, we can formulate the sheaf condition in terms of a limit based on the \v{C}ech nerve.
    We will need the following lemma:

    \begin{lemma}\label{lem:sheaf}
        Let ${\{u_i \to x\}}_{i \in I}$ be a family of morphisms of $\cA$ that generate the covering sieve corresponding to a monomorphism $\eta \colon U \to \Ynd{\cA}(x)$, and let $U_\bullet \colon \Op{\Delta} \to \PSh (\cA)$ be the underlying simplicial object of the \v{C}ech nerve of the induced map $\coprod_{i \in I} \Ynd{\cA}(u_i) \to \Ynd{\cA}(x)$.
        Then, a presheaf $F$ is $\eta$-local if and only if the induced map
        \[
            Fx \longto \lim_{\Op{\Delta}}\, \Map{\PSh (\cA)} (U_\bullet, F)
        \]
        is an equivalence.
        If $\cA$ has pullbacks, the diagram $U_\bullet$ can be decomposed into some diagram $\widetilde{U}_\bullet \colon \Op{\Delta} \to \cA$ and the Yoneda embedding $\Ynd{\cA} \colon \cA \to \PSh (\cA)$, and then a presheaf $F$ is $\eta$-local if and only if the induced map
        \[
            Fx \longto \lim_{\Delta} \, F \circ \Op{\widetilde{U}_\bullet}
        \]
        is an equivalence.
    \end{lemma}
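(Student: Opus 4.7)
The plan is to reduce both claims to the single identification of $U$ with the colimit of the Čech nerve $U_\bullet$ inside the presheaf $\infty$-topos $\PSh(\cA)$; once this identification is available, the statements follow by mechanical manipulations using the Yoneda lemma and the fact that $\Map{\PSh(\cA)}(-,F)$ sends colimits to limits.

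The main obstacle is therefore to show that $\colim_{\Op{\Deltabf}} U_\bullet \simeq U$ in $\PSh(\cA)$. This rests on the $\infty$-topos structure of $\PSh(\cA)$: every morphism admits an essentially unique factorization as an effective epimorphism followed by a monomorphism, and the colimit of the Čech nerve of any morphism recovers the monomorphism part, that is, its image; see \autocite[\S\,6.2.3]{Lurie2009}. Under the correspondence recalled in Example~\ref{ex:sheaves} between sieves on $x$ and monomorphisms into $\Ynd{\cA}(x)$, the image of the canonical map $\coprod_{i\in I}\Ynd{\cA}(u_i)\to\Ynd{\cA}(x)$ is exactly the subobject classified by the sieve generated by $\{u_i\to x\}$, which is $U$ by construction.

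For the first equivalence, I would apply $\Map{\PSh(\cA)}(-,F)$ to $\colim_{\Op{\Deltabf}} U_\bullet\simeq U$, so that the contravariant hom converts the colimit into a limit, yielding $\Map{\PSh(\cA)}(U,F)\simeq\lim_{\Op{\Deltabf}}\Map{\PSh(\cA)}(U_\bullet,F)$, while the Yoneda lemma gives $\Map{\PSh(\cA)}(\Ynd{\cA}(x),F)\simeq Fx$. The $\eta$-locality condition---that the map induced by $\eta$ be an equivalence---then translates directly into the first displayed equivalence. For the second statement, assuming $\cA$ has pullbacks, I would lift $U_\bullet$ along the Yoneda embedding: the level-$n$ piece of the Čech nerve is an $(n+1)$-fold fibre power of $\coprod_i\Ynd{\cA}(u_i)$ over $\Ynd{\cA}(x)$, and since coproducts are universal in the $\infty$-topos and $\Ynd{\cA}$ preserves limits that exist in $\cA$, this rewrites as a coproduct of representables $\Ynd{\cA}(u_{i_0}\times_x\cdots\times_x u_{i_n})$. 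Provided these coproducts are themselves representable (for instance, when the covering family is a singleton), one obtains a diagram $\widetilde{U}_\bullet\colon\Op{\Deltabf}\to\cA$ with $\Ynd{\cA}\circ\widetilde{U}_\bullet\simeq U_\bullet$, after which a final Yoneda identification $\Map{\PSh(\cA)}(\Ynd{\cA}(\widetilde{U}_n),F)\simeq F(\widetilde{U}_n)$ converts the limit over $\Op{\Deltabf}$ into $\lim_{\Deltabf} F\circ\Op{\widetilde{U}_\bullet}$, as required.
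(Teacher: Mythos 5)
Your argument is essentially the paper's: the identification $U \simeq \colim_{\Op{\Deltabf}} U_\bullet$ via the image of $\coprod_i \Ynd{\cA}(u_i) \to \Ynd{\cA}(x)$ (equivalently, its $(-1)$-truncation in the slice, which is how the paper phrases it using Lurie's Lemma~6.2.3.18 and Proposition~6.2.3.4), followed by applying $\Map{\PSh (\cA)}(-,F)$ to turn the colimit into a limit and invoking Yoneda, is exactly the paper's proof of the first equivalence. On the second statement you are right to flag that the levelwise decomposition into coproducts of representables $\Ynd{\cA}(u_{i_0}\times_x\cdots\times_x u_{i_n})$ does not by itself yield a diagram valued in $\cA$ when $|I|>1$; the paper's own proof elides this point by positing a morphism $\widetilde{f}\colon \coprod_{i\in I} u_i \to x$ in $\cA$ with $\Ynd{\cA}(\widetilde{f})\cong f$, which tacitly assumes the relevant coproducts exist in $\cA$ and are carried by $\Ynd{\cA}$ to coproducts of presheaves. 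Your explicit caveat (restricting to covers for which these coproducts are representable, e.g.\ singleton covers) is therefore the more careful reading, but note that as written it establishes the second claim only in that special case rather than in the generality asserted by the lemma; to recover the general statement one must either add a hypothesis on $\cA$ or re-index the limit so that each simplicial level contributes a product of values of $F$ on the objects $u_{i_0}\times_x\cdots\times_x u_{i_n}$.
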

    \begin{proof}
        Let $I$ be a set, and ${\{u_i \to x\}}_{i \in I}$ be a family of morphisms of $\cC$ that generate a covering sieve corresponding to a monomorphism $\eta \colon U \to \Ynd{\cA}(x)$.
        By~\autocite[Lemma 6.2.3.18]{Lurie2009}, $f \colon U \to \Ynd{\cA}(x)$ can be identified with the $(-1)$-truncation of the induced map $\coprod_{i \in I} \Ynd{\cA}[u_i] \to \Ynd{\cA}(x)$ in $\Slice{\PSh (\cA)}{\Ynd{\cA}(x)}$.
        Since $\PSh (\cA)$ is an $\infty$-topos, by~\autocite[Proposition 6.2.3.4]{Lurie2009}, the $(-1)$-truncation of a map $p \colon V \to \Ynd{\cA}(x)$ can be identified with the map $\colim V_\bullet \to \Ynd{\cA}(x)$, where $V_\bullet$ is the underlying simplicial object of the \v{C}ech nerve of~$p$.
        Hence, $f \colon U \to \Ynd{\cA}(x)$ can be identified with a map $\colim U_\bullet \to \Ynd{\cA}(x)$, where $U_\bullet$ is the underlying simplicial object of the \v{C}ech nerve of the induced map $\coprod_{i \in I} \Ynd{\cA}[u_i] \to \Ynd{\cA}(x)$.
        Thus, a presheaf $F$ is $\eta$-local if and only if the following map is an equivalence:
        \[
            \begin{aligned}
                F x
                \longto \Map{\PSh (\cA)} (U, F)
                \simeq \Map{\PSh (\cA)}\Big(\colim_{\Op{\Delta}} U_\bullet, \, F\Big)
                \simeq \lim_{\Op{\Delta}} \, \Map{\PSh (\cA)}(U_\bullet, F),
            \end{aligned}
        \]
        which proves the first statement.
        Now assume that $\cA$ has pullbacks.
        Observe that for any $ [n] \in \Op{\Delta}$, $U_n$ is an iterated pullback of components of the original morphism $\coprod_{i \in I} \Ynd{\cA}(u_i) \to \Ynd{\cA}(x)$.
        Since $\cA$ has pullbacks, there exists a morphism $\widetilde{f} \colon \coprod_{i \in I} u_i \to x$ such that $\Ynd{\cA} (\widetilde{f}) \cong f$.
        Thus, $U_\bullet \simeq \Ynd{\cA} \circ \widetilde{U}_\bullet$ where $\widetilde{U}_\bullet$ is the same diagram iterated pullback but using components of $\widetilde{f}$.
        Thus, a presheaf $F$ is $\eta$-local if and only if the following map is an equivalence:
        \[
            \begin{aligned}
                F x
                \longto \lim_{\Op{\Delta}} \, \Map{\PSh (\cA)}(U_\bullet, F)
                \simeq \lim_{\Op{\Delta}} \, \Map{\PSh (\cA)}(\Ynd{\cA} \circ \widetilde{U}_\bullet, F)
                \simeq \lim_{\Delta} \, F \circ \Op{\widetilde{U}_\bullet},
            \end{aligned}
        \]
        as we wanted to prove.
    \end{proof}

    Let $\cA$ be a small $\infty$-category which  admits pullbacks and a Grothendieck topology $\cT$.
    Let $\Sigma' = (\Op{\cA}, \bL)$ be a limit sketch where the set $\bL$ consists of, for each covering sieve generated by a family ${\{u_i \to x\}}_{i \in I}$, a cone over the underlying simplicial object $\widetilde{U}_\bullet$ of the \v{C}ech nerve of the induced map $\coprod_{i \in I} u_i \to x$, with cone point $x \in \cA$.
    A model of $\Sigma'$ is a presheaf $F \colon \Op{\cA} \to \Spaces$ such that
    \[
        Fx \longto \lim_{\Delta} \, F \circ \widetilde{U}_\bullet
    \]
    is an equivalence for every $f \colon U \to \Ynd{\cA} (x)$.
    Hence, as before, $\Mod (\Sigma') \simeq \Mod (\Sigma) \simeq \Sh(\cA, \cT)$.
\end{example}

To finish this section, we also give an example of a sketch with a nonempty set of cocones:

\begin{example}
    Let $f \colon 0 \to 1$ be the generating morphism of $\sDelta{1}$.
    Define a sketch $\Sigma = (\sDelta{1}, \bL, \bC)$ where $\bL$ contains only a cone of the empty diagram with cone point $1$, and $\bC$ contains the cocone corresponding to the commutative square
    \[
        \begin{tikzcd}[ampersand replacement=\&,cramped]
            0 \& 1 \\
            1 \& {1\rlap{.}}
            \arrow["f", from=1-1, to=1-2]
            \arrow["f"', from=1-1, to=2-1]
            \arrow[equals, from=1-2, to=2-2]
            \arrow[equals, from=2-1, to=2-2]
        \end{tikzcd}
    \]
    Let $\cC$ be an $\infty$-category  with a terminal object $1_\cC$ and pushouts.
    Thus, a model on $\cC$ is a functor $F \colon \cA \to \cC$ such that $F1 \cong 1_\cC$ and the square
    \[
        \begin{tikzcd}[ampersand replacement=\&,cramped]
            F0 \& {1_\cC} \\
            {1_\cC} \& {1_\cC}
            \arrow[from=1-1, to=1-2]
            \arrow[from=1-1, to=2-1]
            \arrow[equals, from=1-2, to=2-2]
            \arrow[equals, from=2-1, to=2-2]
        \end{tikzcd}
    \]
    is a pushout.
    If we take models on an $\infty$-topos $\cE$, then applying $F$ is equivalent to choosing an object $F0 \in \cE$ such that the terminal map $Fa \to 1_\cE$ is an epimorphism.
    As a special case, as observed in \autocite{Hoyois2019},
    the models on $\Spaces$ are nonempty spaces $X$ whose suspension is contractible, i.e., nonempty path-connected spaces whose fundamental group $\pi_1(X)$ is perfect, that is, its abelianization is zero.
    The $\infty$-category of models of this sketch $\Sigma$ on $\mathcal{S}$ is not presentable, since it does not have an initial object.
\end{example}

\section{Flat functors}%
\label{sec:flat-functors}

Let $\cA$ be a small $\infty$-category, and let $\Spaces$ denote the $\infty$-category of spaces.
For notational purposes, we consider both the covariant Yoneda embedding $\Ynd{\cA} \colon \cA\to \Fun{\Op{\cA}}{\Spaces}$ and the contravariant Yoneda embedding $\coYnd{\cA}\colon \Op{\cA}\to \Fun{\cA}{\Spaces}$.
For a presheaf $F \colon \Op{\cA} \to \Spaces$, we denote by $\Lan F$ the left Kan extension of $F$ along $\coYnd{\cA}$.
We denote by $\hRelSlice{\cA}{F}$ the relative slice $\RelSlice{\left(\Ynd{\cA}\right)}{F}$ at a presheaf $F \colon \Op{\cA} \to \Spaces$ along the Yoneda functor, i.e., the following pullback:
\[
    \begin{tikzcd}[ampersand replacement=\&,cramped]
        {\hRelSlice{\cA}{F}} \& {\Slice{\PSh (\cA)}{F}} \\
        \cA \& {\PSh (\cA)}\rlap{.}
        \arrow[from=1-1, to=1-2]
        \arrow[from=1-1, to=2-1]
        \arrow["\lrcorner"{anchor=center, pos=0.125}, draw=none, from=1-1, to=2-2]
        \arrow[from=1-2, to=2-2]
        \arrow["{\Ynd{\cA}}"', from=2-1, to=2-2]
    \end{tikzcd}
\]
Thus, the objects of $\hRelSlice{\cA}{F}$ are pairs consisting of an object $a \in \cA$ and a natural transformation $\alpha \colon \Ynd{\cA} (a) \Rightarrow F$, or, equivalently, pairs of an object $a \in \cA$ and an object $\alpha \in F a$.

Given a regular cardinal $\kappa$, a presheaf $F \colon \Op{\cA} \to \Spaces$ is \textit{$\kappa$-flat} if the left Kan extension
\[
    \Lan F \colon \Fun{\cA}{\Spaces} \longrightarrow \Spaces
\]
preserves $\kappa$-small limits.
The following theorem characterizes $\kappa$-flatness and generalizes results of~\autocite{Adamek2002,Adamek1994, Kelly1982,Makkai1989} to the context of $\infty$-categories.

\begin{theorem}%
    \label{thm:flat<->filtcolim}
    Let $\cA$ be a small $\infty$-category, $\kappa$ a regular cardinal, and $F \colon \Op{\cA} \to \Spaces$ a presheaf.
    The following statements are equivalent:
    \begin{enumerate}[label={\rm (\roman*)}]
        \item $F$ is $\kappa$-flat.
        \item $\Lan F$ preserves $\kappa$-small limits of representables.
        \item $\hRelSlice{\cA}{F}$ is a $\kappa$-filtered $\infty$-category.
        \item $F$ is a $\kappa$-filtered colimit of representables.
    \end{enumerate}
\end{theorem}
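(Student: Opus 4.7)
The plan is to close the cycle (i) $\Rightarrow$ (ii) $\Rightarrow$ (iii) $\Rightarrow$ (iv) $\Rightarrow$ (i), with the substantive work concentrated in the implication (ii) $\Rightarrow$ (iii). The implication (i) $\Rightarrow$ (ii) is immediate, since $\kappa$-small limits of representables form a special case of $\kappa$-small limits. For (iii) $\Rightarrow$ (iv) I would invoke the $\infty$-categorical co-Yoneda Lemma, which presents any presheaf as the canonical colimit $F \simeq \colim_{(a,\alpha) \in \hRelSlice{\cA}{F}} \Ynd{\cA}(a)$: if $\hRelSlice{\cA}{F}$ is $\kappa$-filtered, this exhibits $F$ as a $\kappa$-filtered colimit of representables. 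The converse (iv) $\Rightarrow$ (iii) is also directly visible from the definition of $\Ind_\kappa(\cA)$ recalled in the preliminaries, under which $F \in \Ind_\kappa(\cA)$ is characterized precisely by $\hRelSlice{\cA}{F}$ being $\kappa$-filtered.

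For (iv) $\Rightarrow$ (i), the key observation is the identification $\Lan \Ynd{\cA}(a) \simeq \Ev_a$, which follows from the pointwise formula for left Kan extension of a representable combined with the co-Yoneda identity applied to a general $G \in \Fun{\cA}{\Spaces}$. Since evaluations preserve arbitrary limits and, being left adjoint to restriction along $\coYnd{\cA}$, the assignment $F \mapsto \Lan F$ preserves colimits, I conclude that $\Lan F \simeq \colim_{i \in \cI} \Ev_{p(i)}$ whenever $F \simeq \colim_\cI \Ynd{\cA} \circ p$ with $\cI$ being $\kappa$-filtered. Because $\kappa$-filtered colimits in $\Spaces$ commute with $\kappa$-small limits, this colimit of evaluation functors preserves $\kappa$-small limits pointwise, yielding (i).

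The crux of the proof is (ii) $\Rightarrow$ (iii). I would verify $\kappa$-filteredness of $\hRelSlice{\cA}{F}$ by showing that any diagram $p \colon \cK \to \hRelSlice{\cA}{F}$, with $\cK$ a $\kappa$-small $\infty$-category, admits a cocone extension $\coCones{\cK} \to \hRelSlice{\cA}{F}$. Such a $p$ is encoded as a diagram $q \colon \cK \to \cA$ obtained by projection together with a point $x \in \lim_\cK F \circ \Op{q}$. Setting $L = \lim_\cK \coYnd{\cA} \circ q$ in $\Fun{\cA}{\Spaces}$, hypothesis (ii) yields an equivalence $\Lan F(L) \simeq \lim_\cK F \circ \Op{q}$, transporting $x$ to a point of $\Lan F(L)$. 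The pointwise colimit formula gives $\Lan F(L) \simeq \colim_{(b,\phi)} F(b)$, indexed by pairs consisting of $b \in \cA$ and $\phi \colon \coYnd{\cA}(b) \to L$; by Yoneda and the definition of $L$, such a $\phi$ amounts to a cocone $\phi_\bullet \colon q \Rightarrow \const b$ in $\cA$. Representing $x$ by a triple $(b, \phi_\bullet, z)$ with $z \in F(b)$ and tracing through the equivalence, one finds $F(\phi_k)(z) \simeq x_k$ for each $k$, which is exactly the compatibility required to lift $(b, z)$ together with $\phi_\bullet$ to a cocone on $p$ in $\hRelSlice{\cA}{F}$.

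The main technical obstacle will be making the phrase ``$x$ is represented by $(b, \phi_\bullet, z)$'' precise in the $\infty$-categorical setting: one must produce a genuine simplicial extension $\coCones{\cK} \to \hRelSlice{\cA}{F}$, not merely a point-set cocone surviving only up to $\pi_0$, together with all higher coherences prescribed by the structure of $\cK$. I anticipate handling this by unstraightening the colimit $\colim_{(b,\phi)} F(b)$ into a left fibration whose total space parametrizes coherent tuples $(b, \phi_\bullet, z)$, and combining this with the right fibration presentation $\hRelSlice{\cA}{F} \to \cA$ classified by $F$: a cocone on $q$ in $\cA$ equipped with a compatible lift $z \in F(b)$ then assembles, via the lifting property of this right fibration along the inclusion $\cK \hookrightarrow \coCones{\cK}$, into the required canonical cocone $\coCones{\cK} \to \hRelSlice{\cA}{F}$ extending $p$.
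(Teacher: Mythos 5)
Your overall architecture coincides with the paper's: the same cycle of implications, with (i) $\Rightarrow$ (ii) by definition, (iii) $\Rightarrow$ (iv) via the canonical colimit of representables, (iv) $\Rightarrow$ (i) via $\Lan \Ynd{\cA}(a) \simeq \Ev_a$ and commutation of $\kappa$-filtered colimits with $\kappa$-small limits, and for (ii) $\Rightarrow$ (iii) the same strategy of applying $\Lan F$ to $L = \lim_{\Op{\cK}} \coYnd{\cA}\circ \Op{q}$ and chasing the point $x$ through a colimit presentation of $\Lan F(L)$. (You index that colimit by the comma category of maps $\coYnd{\cA}(b)\to L$, while the paper instead uses $\Lan F(L) \simeq \colim_{\hRelSlice{\cA}{F}} L\circ U$, so that the object it extracts is already a vertex $(a,\alpha)$ of the category whose filteredness is at stake; both formulas are valid.) The genuine gap is exactly where you anticipate it, and your proposed fix does not work. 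A right fibration does not have the right lifting property against $\cK \hookrightarrow \coCones{\cK}$: that inclusion is not right anodyne, and already for $\cK=\emptyset$ the claimed lifting property would force $\hRelSlice{\cA}{F}\to\cA$ to be surjective on objects, which fails whenever some $Fa$ is empty. What is true is that the cone-point inclusion $\sDelta{0}\hookrightarrow\coCones{\cK}$ is right anodyne, so once the vertex is lifted to $(b,z)$ you can lift the whole cocone $\widehat{\phi}\colon\coCones{\cK}\to\cA$; but the restriction of that lift to $\cK$ is the pullback of $z$ along $\phi_\bullet$, and identifying it with the original diagram $p$ requires a coherent equivalence of $\cK$-diagrams in $\hRelSlice{\cA}{F}$. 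Your compatibility $F(\phi_k)(z)\simeq x_k$ is only componentwise, and after the effective-epimorphism/$\pi_0$ step it is only an identification in $\pi_0$; unstraightening the colimit does not by itself upgrade this to the required coherent datum.

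The paper closes precisely this gap by a different assembly mechanism, which you would need to reproduce in some form. It writes $\Op{(\hRelSlice{\cA}{F})}$ as the pullback of $\Op{\cA}$ and $\coSlice{\Spaces}{*}$ over $\Spaces$, and builds the cone on $\Op{(\hRelSlice{\cA}{F})}$ from two compatible cones on the factors: on $\Op{\cA}$ it uses the cone $\tilde{y}$ encoded by the single element $y\in Ha$ of the limit $H=\lim_{\Op{\cK}}\coYnd{\cA}\Op{U}\Op{D}$ (an element of this limit \emph{is} a fully coherent cone, which is what makes the step legitimate), and on $\coSlice{\Spaces}{*}$ it uses the single map $\mu_y\colon Fa\to\lim_{\Op{\cK}}F\Op{U}\Op{D}$ with $\mu_y(\alpha)\simeq x$, together with the fact that the limit in $\Spaces$ lifts to a limit in the coslice with cone point $(\lim_{\Op{\cK}}F\Op{U}\Op{D},x)$. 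All higher coherences are thus packaged into the two elements $y$ and $\mu_y(\alpha)\simeq x$ rather than being reassembled from pointwise identifications over the objects of $\cK$. Until you supply an analogous device, the implication (ii) $\Rightarrow$ (iii) — which is the only substantive one — remains unproved.
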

\begin{proof}
    First, ${\rm (i)} \Rightarrow {\rm (ii)}$ holds by definition.
    The implication ${\rm (iii)} \Rightarrow {\rm (iv)}$ follows from the fact that any presheaf $F$ is a canonical colimit of representables, with diagram
    \[
        \hRelSlice{\cA}{F} \overset{U}{\longto} \cA \xrightarrow{h_\cA} \Fun{\Op{\cA}}{\Spaces}.
    \]
    Next we prove that ${\rm (iv)} \Rightarrow {\rm (i)}$.
    If $a \in \cA$, then, by \autoref{thm:yoneda-lemma}, $\Ev_a$ is a left Kan extension of $\Ynd{\cA}(a)$ along~$\coYnd{\cA}$:
    \[
        \begin{tikzcd}[ampersand replacement=\&,cramped]
            {\Op{\cA}} \& \Spaces\rlap{.} \\
            {\Fun{\cA}{\Spaces}}
            \arrow["{\Ynd{\cA}(a)}", from=1-1, to=1-2]
            \arrow["{\coYnd{\cA}}"', hook, from=1-1, to=2-1]
            \arrow["{\Ev_a}"', from=2-1, to=1-2]
        \end{tikzcd}
    \]
    Assume that $F$ is a $\kappa$-filtered colimit of representables, i.e.,
    \[
        F \cong \colim_{i \in \cI} \, \Ynd{\cA} D(i),
    \]
    where $\cI$ is a $\kappa$-filtered $\infty$-category and $D \colon \cI \to \cA$.
    Consider its left Kan extension
    \[
        \begin{tikzcd}[ampersand replacement=\&,cramped]
            {\Op{\cA}} \& \Spaces\rlap{.} \\
            {\Fun{\cA}{\Spaces}}
            \arrow["F", from=1-1, to=1-2]
            \arrow["{\coYnd{\cA}}"', hook, from=1-1, to=2-1]
            \arrow["{\Lan F}"', from=2-1, to=1-2]
        \end{tikzcd}
    \]
    By definition, $\Lan$ is a left adjoint and preserves colimits.
    Hence, using that $F$ is a $\kappa$-filtered colimit of representables, we obtain that
    \[
        \Lan F
        \cong \Lan \left(\colim_{i \in \cI} \, \Ynd{\cA} D(i)\right)
        \cong \colim_{i \in \cI} \, \Lan (\Ynd{\cA} D(i))
        \cong \colim_{i \in \cI} \, \Ev_{D(i)}.
    \]
    By~\autocite[Proposition 5.1.2.3]{Lurie2009}, the evaluation functor $\Ev_a$ preserves colimits and limits.
    Since $\cI$ is $\kappa$\nobreakdash-filtered, colimits indexed by $\cI$ on $\Spaces$ commute with $\kappa$-small limits.
    Therefore, $\Lan F$ preserves $\kappa$-small limits.

    Finally, we prove that ${\rm (ii)} \Rightarrow \rm{(iii)}$.
    Assume that $\Lan F \colon \Fun{\cA}{\Spaces} \to \Spaces$ preserves $\kappa$-small limits of representables.
    To prove that $\hRelSlice{\cA}{F}$ is a $\kappa$-filtered $\infty$-category, consider any diagram $D \colon \cK \to \hRelSlice{\cA}{F}$ where $\cK$ is $\kappa$-small, and we aim to show that $D$ has a cocone.
    Since $\hRelSlice{\cA}{F}$ classifies~$F$, we have the following pullbacks:
    \[
        \begin{tikzcd}[ampersand replacement=\&,cramped]
            {\Op{\cK}} \& {\Op{(\hRelSlice{\cA}{F})}} \& {\Spaces_{/*}} \& {\Fun{\sDelta{1}}{\Spaces}} \\
            \& {\Op{\cA}} \& \Spaces \& {\Spaces\times\Spaces}\rlap{.}
            \arrow["{\Op{D}}", from=1-1, to=1-2]
            \arrow[from=1-2, to=1-3]
            \arrow["{\Op{U}}"', from=1-2, to=2-2]
            \arrow["\lrcorner"{anchor=center, pos=0.125}, draw=none, from=1-2, to=2-3]
            \arrow[from=1-3, to=1-4]
            \arrow[from=1-3, to=2-3]
            \arrow["\lrcorner"{anchor=center, pos=0.125}, draw=none, from=1-3, to=2-4]
            \arrow[from=1-4, to=2-4]
            \arrow["F"', from=2-2, to=2-3]
            \arrow["{*\times \Id}"', from=2-3, to=2-4]
        \end{tikzcd}
    \]
    The composition of the top row can be viewed as a natural transformation between functors $\Op{\cK} \to \cS$, which is a cone of the diagram $F \Op{U} \Op{D}$ with cone point $*$.
    Using the universal property of limits in $\Spaces$ applied to cones with cone point $*$, we obtain an equivalence
    \[
        \Map{\sFun{\Op{\cK}}{\Spaces}} (\const *, F \Op{U} \Op{D})
        \simeq \Map{\Spaces} \Big(*,\, \lim_{\Op{\cK}} \, F \Op{U} \Op{D}\Big),
    \]
    which determines an object $x \in \lim_{\Op{\cK}} F \Op{U} \Op{D}$.
    Here $\delta *$ denotes the constant functor at~$*$.

    Consider the diagram
    \[
        \coYnd{\cA} \Op{U} \Op{D} \colon \Op{\cK} \longto \Fun{\cA}{\Spaces},
    \]
    where $U \colon \hRelSlice{\cA}{F} \to \cA$ is the forgetful functor of $\hRelSlice{\cA}{F}$ and $\coYnd{\cA}$ is the contravariant Yoneda embedding.
    Since $\Fun{\cA}{\Spaces}$ is complete, there exists a limit $H = \lim_{\Op{\cK}} \coYnd{\cA} \Op{U} \Op{D} \in \Fun{\cA}{\Spaces}$.
    Then
    \[
        \Lan F (H)
        = \Lan F \Big(\lim_{\Op{\cK}} \, \coYnd{\cA} \Op{U} \Op{D}\Big)
        \simeq \lim_{\Op{\cK}} \, \Lan F (\coYnd{\cA} \Op{U} \Op{D})
        \simeq \lim_{\Op{\cK}} \, F \Op{U} \Op{D},
    \]
    where the first equivalence follows from $\Lan F$ preserving $\kappa$-small limits of representables, and the second one from the isomorphism $(\Lan{F}) \circ \coYnd{\cA} \cong F$ defining a left Kan extension. We denote by $\gamma\colon \Lan F (H)
        \to \lim_{\Op{\cK}} F \Op{U} \Op{D}$ the composite equivalence.

    In addition, the left Kan extension $\Lan F (H)$ can be computed as a colimit of
    \[
        \hRelSlice{\cA}{F} \overset{U}{\longto} \cA \overset{H}{\longto} \Spaces.
    \]
    Consequently, there is an object $\Inv{\gamma} (x)$ in $\colim_{\hRelSlice{\cA}{F}} H U$.

    Since $H U$ is a diagram in $\Spaces$, by~\autocite[Lemma 6.2.3.13]{Lurie2009}, the induced map
    \[
        \coprod_{(a, \alpha) \in \hRelSlice{\cA}{F}} \!\!\! H a
        \overset{\theta}{\longto} \colim_{\hRelSlice{\cA}{F}} H U,
    \]
    which is defined using the universal cocones $\theta_{(a, \alpha)}$ of the colimit for each $(a, \alpha)$, is an effective epimorphism.
    In addition, by \autocite[Corollary 7.2.1.15]{Lurie2009}, the induced map
    \[
        \coprod_{(a, \alpha) \in \hRelSlice{\cA}{F}} \!\!\! \pi_0 (H a)
        \longto \pi_0 \Big( \colim_{\hRelSlice{\cA}{F}} H U \Big)
    \]
    is surjective.
    Hence, there exist $(a, \alpha) \in \hRelSlice{\cA}{F}$ and $y \in H a$ such that $\theta_{(a, \alpha)} (y) \cong \Inv{\gamma} (x)$.

    By definition, any object $y \in H a$ can be viewed as a cone $\tilde{y} \colon \delta a \Rightarrow \Op{U}\Op{D}$ in~$\Op{\cA}$, using the equivalences
    \[\begin{aligned}
            \Map{\Spaces} (* , Ha)
             & \simeq \Map{\Spaces} \Big(*,\, \Big(\lim_{\Op{\cK}} \, \coYnd{\cA} \Op{U} \Op{D}\Big)(a)\Big)
            \simeq \Map{\Spaces} \Big(*,\, \lim_{\Op{\cK}} \, \Map{\Op{\cA}}(a, \Op{U}\Op{D}(-))\Big)        \\[0.1cm] &
            \simeq \Map{\sFun{\Op{\cK}}{\Spaces}} (\const *, \,\Map{\Op{\cA}}(a, \Op{U}\Op{D}(-)))
            \simeq \Map{\sFun{\Op{\cK}}{\Op{\cA}}} (\delta a, \, \Op{U}\Op{D}).
        \end{aligned}\]
    Thus, we have the following commutative diagram:
    \[
        \begin{tikzcd}[ampersand replacement=\&,cramped]
            {\Op{\cK}} \& {\Op{(\hRelSlice{\cA}{F})}} \& {\coSlice{\Spaces}{*}} \\
            {\Cones{{(\Op{\cK}})}} \& {\Op{\cA}} \& \Spaces\rlap{.}
            \arrow["{\Op{D}}", from=1-1, to=1-2]
            \arrow[hook, from=1-1, to=2-1]
            \arrow[from=1-2, to=1-3]
            \arrow["{\Op{U}}"', from=1-2, to=2-2]
            \arrow["\lrcorner"{anchor=center, pos=0.125}, draw=none, from=1-2, to=2-3]
            \arrow[from=1-3, to=2-3]
            \arrow["{\tilde{y}}"', from=2-1, to=2-2]
            \arrow["F"', from=2-2, to=2-3]
        \end{tikzcd}
    \]
    To construct a cone on $\Op{(\hRelSlice{\cA}{F})}$, we need two cones: one on $\Op{\cA}$ and one on $\Op{(\coSlice{\Spaces}{*})}$, compatible with the pullback square.
    To obtain a cone on $\Op{(\coSlice{\Spaces}{*})}$ with cone point $(F a, \alpha)$, it is enough to construct a map
    \[
        F a \longrightarrow \lim_{\Op{\cK}} \, F \Op{U} \Op{D}
    \]
    sending $\alpha$ to $x$, because, by \autocite[ Corollary 4.3.1.11]{Lurie2009}, the limit $\lim_{\Op{\cK}} F \Op{U} \Op{D}$ lifts to a limit on $\Op{(\coSlice{\Spaces}{*})}$ with cone point $(\lim_{\Op{\cK}} F \Op{U} \Op{D}, x)$.

    Let $\bar{y} \colon \coYnd{\cA} (a) \to H$ be a natural transformation corresponding to $y \in Ha$ by the Yoneda Lemma.
    By applying $\Lan F$ to $\bar{y}$ and composing with $\gamma$, we obtain a map
    \[
        \mu_y \colon Fa
        \simeq \Lan F (\coYnd{\cA} (a))
        \xrightarrow{\Lan F (\bar{y})} \Lan F (H)
        \overset{\gamma}{\longto} \lim_{\Op{\cK}} F \Op{U} \Op{D}.
    \]
    Using the expression of the left Kan extension $\Lan F$ as a colimit, we obtain a commutative square in $\Fun{\cA}{\Spaces}$:
    \[
        \begin{tikzcd}[ampersand replacement=\&,cramped]
            {\coYnd{\cA} (a)} \&\& H \\
            {\llap{$\const \Lan F (\coYnd{\cA} (a)) \cong$}\displaystyle \const \colim_{\hRelSlice{\cA}{F}}((\coYnd{\cA} (a)) U)} \&\& {\displaystyle \const \colim_{\hRelSlice{\cA}{F}}HU\rlap{$\cong \const \Lan F (H)$,}}
            \arrow["{\bar{y}}", from=1-1, to=1-3]
            \arrow["\bar{\alpha}", from=1-1, to=2-1]
            \arrow["{\bar\theta}", from=1-3, to=2-3]
            \arrow["{\const \Lan F (\bar{y})}", from=2-1, to=2-3]
        \end{tikzcd}
        \smallskip
    \]
    where $\bar{\theta}$ is the universal cocone of $\colim_{\hRelSlice{\cA}{F}}HU$, and $\bar{\alpha}$ is the natural transformation corresponding to $\alpha \in Fa$ by the Yoneda Lemma.
    The commutativity of this square implies that $\Lan F (\bar{y}) (\alpha) \cong \theta(y)$, and, therefore, $\mu_y (\alpha) \cong \gamma \theta (y) \cong x$.
    Thus, there is a cone on $\Op{(\hRelSlice{\cA}{F})}$, which corresponds to a cocone on $\hRelSlice{\cA}{F}$, as we wanted to achieve.
\end{proof}

In the previous proof, we used that, if an $\infty$-category $\cI$ is $\kappa$\nobreakdash-filtered, then colimits indexed by $\cI$ on $\Spaces$ commute with $\kappa$-small limits, as shown in \autocite[Proposition 5.3.3.3]{Lurie2009}.
The converse is a direct corollary of the characterization of flat functors:

\begin{corollary}
    An $\infty$-category $\cI$ is $\kappa$-filtered if and only if the colimits of shape $\cI$ on $\Spaces$ commute with all $\kappa$-small limits in~$\Spaces$, i.e., if and only if the functor
    $
        \colim_\cI \colon \Fun{\cI}{\Spaces} \longrightarrow \Spaces
    $
    preserves $\kappa$-small limits.
\end{corollary}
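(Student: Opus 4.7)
The plan is to reduce the corollary to Theorem~\ref{thm:flat<->filtcolim} by realizing $\colim_\cI$ itself as a left Kan extension of the form $\Lan F$ for a judiciously chosen presheaf. The forward direction (a $\kappa$-filtered $\infty$-category has colimits on $\Spaces$ commuting with $\kappa$-small limits) is~\autocite[Proposition 5.3.3.3]{Lurie2009}, which was already invoked in the proof of Theorem~\ref{thm:flat<->filtcolim}; so only the converse requires new argument.

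For the converse, I would apply Theorem~\ref{thm:flat<->filtcolim} with $\cA = \cI$ and $F = \const{*} \colon \Op{\cI} \to \Spaces$ the terminal presheaf. Two identifications do the work. First, the defining pullback of $\hRelSlice{\cI}{F}$ collapses: since every value $Fa$ is contractible, and $\const{*}$ is terminal in $\PSh(\cI)$, the relative slice is equivalent to $\cI$ itself. Second, by the coend formula for the left Kan extension along $\coYnd{\cI}$, one has
\[
    \Lan F (G) \;\simeq\; \int^{a \in \cI} G(a) \times F(a) \;\simeq\; \colim_{\cI} G,
\]
so $\Lan F$ agrees with $\colim_\cI$ as a functor $\Fun{\cI}{\Spaces} \to \Spaces$.

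With these identifications in hand, the hypothesis that $\colim_\cI$ preserves $\kappa$-small limits is precisely item~(i) of Theorem~\ref{thm:flat<->filtcolim} for $F = \const{*}$, and the implication (i)$\Rightarrow$(iii) of that theorem yields that $\cI \simeq \hRelSlice{\cI}{F}$ is $\kappa$-filtered, as desired.

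The main obstacle I anticipate is not conceptual but bookkeeping: verifying cleanly that $\hRelSlice{\cI}{\const{*}} \simeq \cI$ from the pullback definition (using that the slice of a presheaf $\infty$-category over its terminal object is the category itself), and that the conventions for the contravariant Yoneda embedding give $\Lan(\const{*}) \simeq \colim_\cI$ rather than its opposite variant. Both are routine but need to be spelled out so that Theorem~\ref{thm:flat<->filtcolim} can be applied directly.
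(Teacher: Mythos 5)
Your proposal is correct and is essentially the paper's own argument: the paper likewise takes $F = \const *$, identifies $\hRelSlice{\cI}{\const *} \simeq \cI$ from the terminality of $\const *$ in $\PSh(\cI)$, identifies $\Lan(\const *) \cong \colim_\cI$ (via the colimit-over-the-slice formula rather than your coend, an equivalent description), and invokes the equivalence of (i) and (iii) in Theorem~\ref{thm:flat<->filtcolim}. The only cosmetic difference is that you outsource the forward direction to \autocite[Proposition 5.3.3.3]{Lurie2009}, whereas the paper gets both directions at once from the theorem.
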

\begin{proof}
    Since $\const *$ is the terminal object of $\PSh (\cI)$, there are equivalences
    \[
        \Slice{\PSh (\cI)}{\const *}
        \simeq \PSh (\cI)
        \quad\text{ and }\quad
        \hRelSlice{\cI}{\const *}
        \simeq \cI,
    \]
    where the second equivalence is a pullback of the first one.
    Then, $\cI$ is $\kappa$-filtered if and only if $\hRelSlice{\cI}{\const *}$ is $\kappa$-filtered, and the colimit functor is the left Kan extension along the Yoneda embedding of the constant functor:
    \[
        \Lan (\const *)
        \cong \colim_{\hRelSlice{\cI}{\const *}} (- \circ U)
        \cong \colim_\cI (-).
    \]
    By~\autoref{thm:flat<->filtcolim}, the flatness of $\const *$, i.e., the fact that $\colim_{\cI}$ preserves $\kappa$-small limits, is equivalent to the $\kappa$-filteredness of $\hRelSlice{\cI}{\const *}$, and thus to the $\kappa$-filteredness of $\cI$.
\end{proof}

Under the assumptions of~\autoref{thm:flat<->filtcolim}, if in addition $\cA$ is $\kappa$-cocomplete, then the following simpler characterization of $\kappa$-flat functors follows as a corollary:

\begin{corollary}\label{lem:indcont}
    Let $\cA$ be a small $\infty$-category, $\kappa$ be a regular cardinal and $F \colon \Op{\cA} \to \Spaces$ be a presheaf on $\cA$.
    If $F$ is $\kappa$-flat, then it preserves all $\kappa$-small limits in $\Op{\cA}$.
    Conversely, if $\cA$ is $\kappa$-cocomplete and $F$ preserves all $\kappa$-small limits in $\Op{\cA}$, then $F$ is $\kappa$-flat.
\end{corollary}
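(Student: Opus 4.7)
The statement decomposes into two independent implications, both accessible via Theorem~\ref{thm:flat<->filtcolim}.

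For the forward direction, the plan is to exploit the defining identity $(\Lan F)\circ \coYnd{\cA}\simeq F$ together with the fact that the contravariant Yoneda embedding $\coYnd{\cA}\colon \Op{\cA}\to \Fun{\cA}{\Spaces}$ preserves every limit that exists in $\Op{\cA}$ (being the covariant Yoneda embedding of $\Op{\cA}$). Given a $\kappa$-small diagram $D\colon \cK\to \Op{\cA}$ with limit $\lim D$ in $\Op{\cA}$, one then computes $F(\lim D) \simeq \Lan F(\coYnd{\cA}(\lim D)) \simeq \Lan F(\lim (\coYnd{\cA}\circ D)) \simeq \lim (F\circ D)$, where the last step uses $\kappa$-flatness of $F$. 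No hypothesis on $\cA$ is needed for this direction.

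For the converse, I would assume $\cA$ is $\kappa$-cocomplete and $F$ preserves $\kappa$-small limits in $\Op{\cA}$, and then invoke Theorem~\ref{thm:flat<->filtcolim} to reduce the problem to showing that $\hRelSlice{\cA}{F}$ is $\kappa$-filtered. Let $D\colon \cK\to \hRelSlice{\cA}{F}$ be a $\kappa$-small diagram: projection to $\cA$ yields $\overline{D}\colon \cK\to \cA$, and the remaining data packages, by the Yoneda-style identification used in the proof of Theorem~\ref{thm:flat<->filtcolim}, into an element $\alpha$ of $\lim_{\Op{\cK}} F\circ \Op{\overline{D}}$. Setting $c = \colim_{\cK}\overline{D}$, which exists by cocompleteness of $\cA$ and is the limit of $\Op{\overline{D}}$ in $\Op{\cA}$, the preservation hypothesis yields $F(c)\simeq \lim_{\Op{\cK}} F\circ \Op{\overline{D}}$. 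Hence $\alpha$ lifts to an element $\widetilde{\alpha}\in F(c)$, and $(c,\widetilde{\alpha})$ together with the structure maps of the colimit cocone of $\overline{D}$ supplies the desired cocone in $\hRelSlice{\cA}{F}$.

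The main technical subtlety, exactly as in the proof of Theorem~\ref{thm:flat<->filtcolim}, lies in upgrading the lifted element $\widetilde{\alpha}$ to a genuine cocone $D\Rightarrow \const (c,\widetilde{\alpha})$ in $\hRelSlice{\cA}{F}$, rather than merely identifying a candidate cocone point. I expect this to be handled by the same universal-property manipulation already present there, translating between cones with cone point $\ast$ and elements of limit spaces, so the plan is to invoke that argument directly rather than reproduce it.
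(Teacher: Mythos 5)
Your proposal is correct. The forward direction coincides exactly with the paper's argument: both compute $F(\lim D) \simeq \Lan F(\coYnd{\cA}(\lim D)) \simeq \Lan F(\lim_{\cK} \coYnd{\cA}\circ D) \simeq \lim_{\cK} (F\circ D)$ using the identity $(\Lan F)\circ\coYnd{\cA}\simeq F$, the limit-preservation of $\coYnd{\cA}$, and flatness, with no hypothesis on $\cA$, as you note. For the converse, both you and the paper reduce, via Theorem~\ref{thm:flat<->filtcolim}, to showing that $\hRelSlice{\cA}{F}$ is $\kappa$-filtered; the difference lies in how that is established. The paper identifies $\hRelSlice{\cA}{F}$ with the total space of the right fibration classified by $F$ and cites \autocite[Lemma 5.4.5.5]{Lurie2009} to conclude that it admits $\kappa$-small colimits, hence is $\kappa$-filtered. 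You instead construct the cocone by hand: project a $\kappa$-small diagram to $\overline{D}\colon\cK\to\cA$, take its colimit $c$ (available by $\kappa$-cocompleteness), and use the hypothesis $F(c)\simeq\lim_{\Op{\cK}}F\circ\Op{\overline{D}}$ to lift the classifying element $\alpha$ to $\widetilde{\alpha}\in F(c)$; the upgrade of $(c,\widetilde{\alpha})$ to a genuine cocone in $\hRelSlice{\cA}{F}$ is exactly the cone-lifting step (via \autocite[Corollary 4.3.1.11]{Lurie2009}) already carried out in the proof of Theorem~\ref{thm:flat<->filtcolim}, so deferring to that argument is legitimate. Your route is more self-contained, in effect reproving the special case of Lurie's lemma that is needed, at the cost of repeating some bookkeeping; the paper's citation is shorter but leaves the filteredness mechanism opaque.
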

\begin{proof}
    Assume that $F \colon \Op{\cA} \to \Spaces$ is $\kappa$-flat, and consider a diagram $D \colon \cK \to \Op{\cA}$ with limit in $\Op{\cA}$ and where $\cK$ is $\kappa$-small.
    Then, there are isomorphisms
    \[\begin{aligned}
            \lim_\cK F D
             & \cong \lim_\cK \, \Lan F (\coYnd{\cA} D)
            \cong \Lan F (\lim_\cK \coYnd{\cA} D)       \\[0.1cm]
             & \cong \Lan F (\coYnd{\cA} (\lim_\cK D))
            \cong F (\lim_\cK D),
        \end{aligned}\]
    where the first and the last follow from the isomorphism $\Lan F (\coYnd{\cA}) \cong F$, while the second follows from the fact that $\Lan F$ preserves $\kappa$-small limits, and the third is given by the fact that $\coYnd{\cA}$ preserves the limits that exist in~$\Op{\cA}$.

    Conversely, assume that $\cA$ is $\kappa$-cocomplete and $F$ preserves $\kappa$-small limits in $\Op{\cA}$.
    The right fibration associated to $F$ is the pullback $\Slice{(\Op{\Spaces})}{*}  \times_{\Op{\Spaces}} \cA$ of the universal right fibration by $\Op{F}$.
    Since $\Spaces$ is cocomplete, $\cA$ is $\kappa$-cocomplete and $F$ preserves $\kappa$-small limits, by~\autocite[Lemma 5.4.5.5]{Lurie2009}, $\Slice{(\Op{\Spaces})}{*}  \times_{\Op{\Spaces}} \cA$ is $\kappa$-complete.
    Thus, it is $\kappa$-filtered, and, by~\autoref{thm:flat<->filtcolim}, $F$ is $\kappa$-flat.
\end{proof}

Denote by $\FunL{\cA}{\cC}$ the full subcategory of $\Fun{\cA}{\cC}$ spanned by functors which are left adjoints.
If $\cA$ is locally small and $\cC$ is presentable, then the Adjoint Functor Theorem implies that the left adjoints are precisely the functors preserving small colimits.
Recall the universal property of the $\infty$-category of presheaves (see~\autocite[Theorem 5.1.5.6]{Lurie2009}): if $\cC$ is a cocomplete $\infty$-category, the composition with the Yoneda embedding $\Ynd{\cA} \colon \cA \to \PSh (\cA)$ induces an equivalence
\[
    \FunL{\PSh (\cA)}{\cC}
    \overset{\sim}{\longto}
    \Fun{\cA}{\cC}.
\]

Using Theorem~\ref{thm:flat<->filtcolim} together with the universal property of presheaves, we can prove new characterizations of accessibility, which generalize results  from~\autocite{Adamek2002,Adamek1994, Makkai1989}:

\begin{theorem}\label{lem:acc-flat}
    Let $\cC$ be an $\infty$-category.
    There exist some small $\infty$-category $\cA$ and some regular cardinal $\kappa$ such that the following are equivalent:
    \begin{enumerate}[label={\rm (\roman*)}]
        \item $\cC$ is accessible.
        \item $\cC$ is equivalent to the full subcategory of $\PSh (\cA)$ spanned by the $\kappa$-filtered colimits of representables.
        \item $\cC$ is equivalent to the full subcategory of all functors from $\Fun{\cA}{\Spaces}$ to $\Spaces$ preserving colimits and $\kappa$-filtered limits.
        \item $\cC$ is equivalent to the full subcategory of all functors from $\Fun{\cA}{\Spaces}$ to $\Spaces$ preserving colimits and $\kappa$-filtered limits of representables.
    \end{enumerate}
\end{theorem}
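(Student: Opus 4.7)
My plan is to combine the characterization of accessibility already cited as \autoref{thm:accessible} with the characterization of flatness in \autoref{thm:flat<->filtcolim}, using the universal property of presheaf $\infty$-categories to translate between presheaves on $\cA$ and colimit-preserving functors out of $\Fun{\cA}{\Spaces}$. The equivalence (i)\,$\Leftrightarrow$\,(ii) is already part of \autoref{thm:accessible}, which also supplies valid choices of $\cA$ and $\kappa$ (for instance $\cA = \cC^\kappa$ when $\cC$ is $\kappa$-accessible); it therefore suffices to prove (ii)\,$\Leftrightarrow$\,(iii)\,$\Leftrightarrow$\,(iv) for the same data.

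The key tool is the universal property of presheaves applied to $\Op{\cA}$: because $\Spaces$ is cocomplete, precomposition with the contravariant Yoneda embedding $\coYnd{\cA} \colon \Op{\cA} \to \Fun{\cA}{\Spaces}$ induces an equivalence
\[
    \FunL{\Fun{\cA}{\Spaces}}{\Spaces} \overset{\simeq}{\longto} \PSh(\cA),
\]
with inverse $F \mapsto \Lan F$. Since $\Fun{\cA}{\Spaces}$ is presentable, the Adjoint Functor Theorem (\autocite[Corollary 5.5.2.9]{Lurie2009}) identifies the colimit-preserving functors $\Fun{\cA}{\Spaces} \to \Spaces$ with left adjoints, so every such functor arises as some $\Lan F$. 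Under this equivalence, condition (iii) corresponds to those $F \in \PSh(\cA)$ for which $\Lan F$ preserves $\kappa$-small limits, i.e., to the $\kappa$-flat presheaves; condition (iv) corresponds to those $F$ for which $\Lan F$ preserves $\kappa$-small limits of representables; and condition (ii) corresponds to the presheaves that are $\kappa$-filtered colimits of representables. These three descriptions coincide by \autoref{thm:flat<->filtcolim}, yielding (ii)\,$\Leftrightarrow$\,(iii)\,$\Leftrightarrow$\,(iv).

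The main obstacle I foresee is bookkeeping rather than conceptual: one must verify that the three full subcategories described above correspond cleanly under the equivalence $F \leftrightarrow \Lan F$ without losing or gaining constraints, and that the size considerations (local smallness of $\Fun{\cA}{\Spaces}$ and $\Spaces$) are consistent so that the Adjoint Functor Theorem applies as stated. Once this identification is in place, no further technical ingredient beyond \autoref{thm:accessible} and \autoref{thm:flat<->filtcolim} is required, and the full chain (i)\,$\Leftrightarrow$\,(ii)\,$\Leftrightarrow$\,(iii)\,$\Leftrightarrow$\,(iv) is obtained with a common choice of $\cA$ and $\kappa$.
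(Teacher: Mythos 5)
Your proposal is correct and follows essentially the same route as the paper: take (i)\,$\Leftrightarrow$\,(ii) from the cited characterization of accessibility, identify colimit-preserving functors $\Fun{\cA}{\Spaces}\to\Spaces$ with presheaves on $\cA$ via the Adjoint Functor Theorem and the universal property of presheaf $\infty$-categories, and then conclude by Theorem~\ref{thm:flat<->filtcolim} that the three resulting full subcategories of $\PSh(\cA)$ have the same objects. Your reading of conditions (iii) and (iv) in terms of $\kappa$-small limits (rather than the literal ``$\kappa$-filtered limits'' of the statement) is exactly what the flatness theorem requires and is the same identification the paper's own proof makes.
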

\begin{proof}
    The equivalence (i) $\Leftrightarrow$  (ii) is well known, and can be found in~\autocite[Corollary 5.3.5.4]{Lurie2009} and~\autocite[Subsection 11.7]{Rezk2022}.
    In both cases, the small $\infty$-category $\cA$ is equivalent to the $\infty$-category of $\kappa$-compact objects of $\cC$.

    Therefore, we need to prove that (ii) $\Leftrightarrow$ (iii) $\Leftrightarrow$ (iv).
    Let $\cA$ be a small $\infty$-category and $\kappa$ be a regular cardinal.
    Define $\cD$ as the full subcategory of $\PSh (\cA)$ spanned by the $\kappa$-filtered colimits of representables, and $\cE$ (resp.\ $\cF$) as the full subcategory of all functors from $\Fun{\cA}{\Spaces}$ to $\Spaces$ preserving colimits and $\kappa$-filtered limits (resp.\ $\kappa$-filtered limits of representables).
    We want to show equivalences between these three $\infty$-categories.

    In principle, $\cE$ and $\cF$ are full subcategories of the large $\infty$-category $\Fun{\Fun{\cA}{\Spaces}}{\Spaces}$, and therefore they could be large.
    Because $\Spaces$ is presentable and $\cA$ is locally small, the $\infty$-category of functors from $\Fun{\cA}{\Spaces}$ to $\Spaces$ preserving colimits is equivalent to $\FunL{\Fun{\cA}{\Spaces}}{\Spaces}$.
    Then, $\cE$ (resp.\ $\cF$) is equivalent to the full subcategory of $\FunL{\Fun{\cA}{\Spaces}}{\Spaces}$ spanned by those functors preserving $\kappa$-filtered limits (resp.\ $\kappa$-filtered limits of representables).
    By the universal property of the $\infty$-category of presheaves,
    \[
        \FunL{\Fun{\cA}{\Spaces}}{\Spaces}
        = \FunL{\PSh (\Op{\cA})}{\Spaces}
        \simeq \Fun{\Op{\cA}}{\Spaces}
        = \PSh (\cA).
    \]
    Thus, $\cE$ (resp.\ $\cF$) are also the full subcategories of $\PSh (\cA)$ spanned by those presheaves whose left Kan extension preserves $\kappa$-filtered limits (resp.\ $\kappa$-filtered limits of representables).
    In addition, $\cE$ and $\cF$ must also be locally small $\infty$-categories.

    Recall that two full subcategories of the same $\infty$-category are equivalent if they have isomorphic sets of objects.
    By~\autoref{thm:flat<->filtcolim}, a presheaf is a $\kappa$-filtered colimit of representables if and only if it is $\kappa$-flat, if and only if the left Kan extension preserves $\kappa$-filtered limits of representables.
    Then, because $\cD$, $\cE$ and $\cF$ are full subcategories of $\PSh (\cA)$ with isomorphic sets of objects, they must be equivalent.
\end{proof}

A natural question to ask is what this characterization looks like when considering presentable $\infty$-categories instead of accessible ones.
We use the notation $\Cont_\kappa(\Op{\cA})$ for the full subcategory of functors $\Op{\cA} \to \Spaces$ which preserve all limits that exist in $\Op{\cA}$.
The previous proposition amounts to an inclusion of full subcategories
\[
    \Ind_\kappa (\cA)
    \simeq \Flat_\kappa (\cA)
    \subseteq \Cont_\kappa(\Op{\cA}).
\]
Furthermore, if $\cA$ admits $\kappa$-small colimits, then the inclusion $\Flat_\kappa (\cA)
    \subseteq \Cont_\kappa(\Op{\cA})$ becomes an equivalence of full subcategories $\Flat_\kappa (\cA)
    \simeq \Cont_\kappa(\Op{\cA})$.

This equivalence yields the following corollary, which coincides with the characterization found in~\autocite[Proposition 5.3.5.4]{Lurie2009}:

\begin{corollary}\label{cor:presentable<->Cont}
    Let $\cC$ be an $\infty$-category and $\kappa$ be a regular cardinal.
    Then $\cC$ is $\kappa$-presentable if and only if it is equivalent to $\Cont_\kappa (\cA)$ for some small $\infty$-category $\cA$ which admits $\kappa$-small colimits.
\end{corollary}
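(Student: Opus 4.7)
The plan is to combine Theorem~\ref{thm:simpsonlurie} with the chain of equivalences $\Ind_\kappa(\cB) \simeq \Flat_\kappa(\cB) \simeq \Cont_\kappa(\Op{\cB})$ recalled in the paragraph immediately preceding the statement. By Theorem~\ref{thm:simpsonlurie}(ii), the $\kappa$-presentability of $\cC$ is equivalent to the existence of an equivalence $\cC \simeq \Ind_\kappa(\cB)$ for some small $\cB$ admitting $\kappa$-small colimits, so the problem reduces to exhibiting such an $\Ind_\kappa(\cB)$ as $\Cont_\kappa$ of a suitable small $\infty$-category.

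First I would invoke Theorem~\ref{thm:flat<->filtcolim} to identify $\Ind_\kappa(\cB)$ with $\Flat_\kappa(\cB)$, since that theorem shows the $\kappa$-flat presheaves to be exactly the $\kappa$-filtered colimits of representables. Next I would apply Corollary~\ref{lem:indcont}: because $\cB$ admits $\kappa$-small colimits, $\Op{\cB}$ admits $\kappa$-small limits, and the corollary characterizes $\kappa$-flat presheaves on $\Op{\cB}$ as precisely those preserving those $\kappa$-small limits. Hence the inclusion $\Flat_\kappa(\cB) \subseteq \Cont_\kappa(\Op{\cB})$ is an equivalence, and concatenating yields $\cC \simeq \Cont_\kappa(\Op{\cB})$. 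Relabeling $\cA = \Op{\cB}$ produces the required equivalence in the form $\cC \simeq \Cont_\kappa(\cA)$.

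The converse direction is immediate from reading the same chain in reverse: given any small $\cA$ of the prescribed form, $\Cont_\kappa(\cA)$ is equivalent to $\Ind_\kappa(\Op{\cA})$, which is $\kappa$-presentable by Theorem~\ref{thm:simpsonlurie}. I do not foresee any real obstacle, as all substantive work has already been dispatched in Theorem~\ref{thm:flat<->filtcolim} and Corollary~\ref{lem:indcont}; the only point requiring care is the variance bookkeeping of the opposite, so that the hypothesis on $\cA$ in the statement lines up with the hypothesis on $\cB = \Op{\cA}$ needed to apply Corollary~\ref{lem:indcont}.
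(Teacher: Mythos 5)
Your argument is exactly the one the paper intends: the corollary is stated there with no separate proof, as an immediate consequence of Theorem~\ref{thm:simpsonlurie}(ii) together with the chain $\Ind_\kappa(\cB)\simeq\Flat_\kappa(\cB)\simeq\Cont_\kappa(\Op{\cB})$ supplied by Theorem~\ref{thm:flat<->filtcolim} and Corollary~\ref{lem:indcont}, which is precisely what you assemble. The variance bookkeeping you flag is indeed the one delicate point: your construction produces $\cA=\Op{\cB}$ admitting $\kappa$-small \emph{limits} while the statement asks for $\kappa$-small colimits, a mismatch that is already present in the paper's own formulation rather than a defect of your proof.
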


\section{\texorpdfstring{Limit-sketchable $\infty$-categories}{Limit-sketchable infinity-categories}}%
\label{sec:limit-sketchable}

Our goal in this section is to generalize the well-known characterization of presentable categories as limit-sketchable categories to the higher setting.
Thus, we aim to prove that an $\infty$-category is presentable if and only if it is equivalent to the $\infty$-category of models of a limit sketch.

\begin{theorem}\label{thm:presentable->lsketch}
    Every $\kappa$-presentable $\infty$-category is normally limit $\kappa$-sketchable.
\end{theorem}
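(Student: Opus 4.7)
The plan is to exhibit, for an arbitrary $\kappa$-presentable $\infty$-category $\cC$, a canonical normal limit $\kappa$-sketch whose $\infty$-category of models is equivalent to $\cC$. The argument should essentially be a reinterpretation of the characterization of presentability in terms of $\kappa$-continuous presheaves.

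First, I would invoke \autoref{cor:presentable<->Cont} (together with the preceding observation that $\Flat_\kappa(\cA) \simeq \Cont_\kappa(\Op{\cA})$ when $\cA$ admits $\kappa$-small colimits) to obtain a small $\infty$-category $\cB$ admitting $\kappa$-small limits such that $\cC \simeq \Cont_\kappa(\cB)$; concretely, one may take $\cB = \Op{\cA}$ where $\cA$ is small and $\kappa$-cocomplete with $\cC \simeq \Ind_\kappa(\cA)$.

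Next, I would define the sketch $\Sigma = (\cB, \bL)$ where $\bL$ consists of a chosen limit cone over each $\kappa$-small diagram $D \colon \cK \to \cB$. Since $\cB$ is small and $\kappa$ is a regular cardinal, the collection of such diagrams is a small set, so $\bL$ is a genuine (small) set of cones and $\Sigma$ is a legitimate sketch. By construction $\Sigma$ is a limit sketch, it is $\kappa$-small since each diagram has $\kappa$-small shape, and it is normal because $\cB$ is $\bL$-complete (it admits $\kappa$-small limits) and every distinguished cone is an actual limit cone.

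The last step is to identify $\Mod(\Sigma)$ with $\Cont_\kappa(\cB)$ as full subcategories of $\Fun{\cB}{\Spaces}$. A functor $F \colon \cB \to \Spaces$ belongs to $\Mod(\Sigma)$ precisely when it sends every cone of $\bL$ to a limit cone in $\Spaces$; since $\bL$ contains a limit cone over every $\kappa$-small diagram, this is exactly the condition that $F$ preserves every $\kappa$-small limit of $\cB$. Concatenating the equivalences yields $\cC \simeq \Cont_\kappa(\cB) \simeq \Mod(\Sigma)$, as desired. I do not expect a genuine obstacle; the only items requiring care are set-theoretic bookkeeping (ensuring $\bL$ is small, via the strongly inaccessible cardinal assumption) and verifying that the two equivalent presentations of a cone---as a natural transformation $\const x \Rightarrow D$ and as a functor $\Cones{\cK} \to \cB$---produce the same notion of a distinguished cone being sent to a limit cone, which is immediate from the discussion in Section~\ref{sec:preliminaries}.
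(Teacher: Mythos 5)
Your proposal is correct and follows essentially the same route as the paper: write $\cC \simeq \Ind_\kappa(\cA)$ with $\cA$ small and $\kappa$-cocomplete, take the normal limit $\kappa$-sketch on $\Op{\cA}$ whose distinguished cones are the $\kappa$-small limit cones, identify its models with $\Cont_\kappa(\Op{\cA})$, and conclude via Corollary~\ref{cor:presentable<->Cont}. The only (immaterial) difference is that the paper takes $\bL$ to be the set of \emph{all} $\kappa$-small limit cones rather than one chosen cone per diagram.
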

\begin{proof}
    Any $\kappa$-presentable $\infty$-category $\cC$ is of the form $\cC \simeq \Ind_\kappa (\cA)$ for some small $\infty$-category $\cA$ which admits $\kappa$-small colimits.
    Consider a limit sketch $\Sigma = (\Op{\cA}, \bL)$ where $\bL$ is the set of all limit cones of $\kappa$-small diagrams in $\Op{\cA}$.
    Observe that $\bL$ is well-defined as a set because $\cA$ is small, and that $\Sigma$ is $\kappa$-small because all the diagrams in $\bL$ are so.
    The $\infty$-category of models $\Mod (\Sigma)$ is, by definition, the $\infty$-category of functors preserving all limit cones of $\kappa$-small diagrams in $\Op{\cA}$, i.e., $\Cont_\kappa (\Op{\cA})$.
    Since $\cA$ admits $\kappa$-small colimits, \autoref{cor:presentable<->Cont} implies that $\Ind_\kappa (\cA)$ is equivalent to $\Cont_\kappa (\Op{\cA})$ as full subcategories of $\PSh (\cA)$.
    Therefore, $\cC \simeq \Ind_\kappa (\cA)$ is equivalent to $\Cont_\kappa (\Op{\cA}) \simeq \Mod(\Sigma)$.
\end{proof}

\begin{theorem}\label{thm:lsketch->presentable}
    Let $\kappa$ be an uncountable regular cardinal and $\Sigma = (\cA, \bL)$ be a limit $\kappa$-sketch.
    Then:
    \begin{enumerate}[label={\rm (\alph*)}]
        \item $\Mod (\Sigma)$ is presentable and an accessible reflective localization of $\Fun{\cA}{\Spaces}$.\label{itm:models-adjunction}
        \item $\Mod (\Sigma) \subseteq \Fun{\cA}{\Spaces}$ is stable under $\kappa$-filtered colimits.
        \item If $L : \Fun{\cA}{\Spaces} \rightleftarrows \Mod (\Sigma) : i$ denotes the adjunction of~\ref{itm:models-adjunction}, where $i$ is the inclusion, then $i \circ L$ preserves $\kappa$-filtered colimits.
        \item $\Mod (\Sigma)$ is $\kappa$-presentable and a $\kappa$-accessible reflective localization of $\Fun{\cA}{\Spaces}$.
    \end{enumerate}
\end{theorem}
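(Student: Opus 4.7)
The plan is to exhibit $\Mod(\Sigma)$ as the reflective localization of the presentable $\infty$-category $\Fun{\cA}{\Spaces}$ at a small set of morphisms, and then to deduce all four claims by exploiting the $\kappa$-smallness of the diagrams in $\bL$ together with the commutation of $\kappa$-filtered colimits with $\kappa$-small limits in $\Spaces$.

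First, for each cone $\alpha \in \bL$ with cone point $x_\alpha$ and $\kappa$-small diagram $D_\alpha \colon \cK_\alpha \to \cA$, I would use the contravariant Yoneda embedding to build a canonical morphism
\[
    s_\alpha \colon \colim_{\Op{\cK_\alpha}} \bigl(\coYnd{\cA} \circ \Op{D_\alpha}\bigr) \longrightarrow \coYnd{\cA}(x_\alpha)
\]
in $\Fun{\cA}{\Spaces}$, induced by $\alpha$ via the universal property of colimits. A Yoneda calculation identifies the canonical comparison $F(x_\alpha) \to \lim_{\cK_\alpha} F \circ D_\alpha$ with pre-composition by $s_\alpha$; hence $F$ is a model if and only if it is $S$-local for $S = \{s_\alpha \mid \alpha \in \bL\}$. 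Since $S$ is a small set in the presentable $\infty$-category $\Fun{\cA}{\Spaces}$, \autocite[Proposition 5.5.4.15]{Lurie2009} gives part~(a): the inclusion $i \colon \Mod(\Sigma) \hookrightarrow \Fun{\cA}{\Spaces}$ is reflective with left adjoint $L$, and $\Mod(\Sigma)$ is presentable.

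For (b), I would check pointwise that a $\kappa$-filtered colimit of models is again a model. Given a $\kappa$-filtered diagram $\{F_i\}_{i \in \cI}$ of models with colimit $F$ computed in $\Fun{\cA}{\Spaces}$, for each $\alpha \in \bL$ the canonical comparison fits into the chain
\[
    F(x_\alpha) \simeq \colim_i F_i(x_\alpha) \overset{\simeq}{\longto} \colim_i \lim_{\cK_\alpha} F_i \circ D_\alpha \overset{\simeq}{\longto} \lim_{\cK_\alpha} \colim_i F_i \circ D_\alpha \simeq \lim_{\cK_\alpha} F \circ D_\alpha,
\]
where the first marked equivalence uses that each $F_i$ is a model and the second uses that $\kappa$-filtered colimits in $\Spaces$ commute with $\kappa$-small limits, valid because $\cK_\alpha$ is $\kappa$-small by hypothesis. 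Part~(c) then follows formally: $L$ preserves all colimits as a left adjoint, and $i$ preserves $\kappa$-filtered colimits by (b).

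For (d), I would first observe that $\Fun{\cA}{\Spaces}$ is $\kappa$-presentable, since the representables $\coYnd{\cA}(a)$ form a set of $\aleph_0$-compact generators and $\aleph_0 \ll \kappa$ whenever $\kappa$ is uncountable. By (b)--(c), the reflective localization $L \dashv i$ is $\kappa$-accessible, and the adjunction equivalence $\Map{\Mod(\Sigma)}(L(-), -) \simeq \Map{\Fun{\cA}{\Spaces}}(-, i(-))$ then shows that $L$ preserves $\kappa$-compact objects. Consequently, the images under $L$ of a set of $\kappa$-compact generators of $\Fun{\cA}{\Spaces}$ form a set of $\kappa$-compact generators of $\Mod(\Sigma)$, which is cocomplete because its colimits are obtained by applying $L$ to the corresponding colimits in $\Fun{\cA}{\Spaces}$; this yields $\kappa$-presentability. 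The main technical care lies precisely in this last step, where the uncountable hypothesis on $\kappa$ is invoked to secure the $\kappa$-presentability of the ambient functor category and thereby promote $\kappa$-accessibility of the localization to $\kappa$-presentability of $\Mod(\Sigma)$.
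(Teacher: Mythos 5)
Your proposal is correct and follows essentially the same route as the paper: identify $\Mod(\Sigma)$ with the local objects for the set of Yoneda-induced comparison morphisms $s_\alpha$ and invoke \autocite[Proposition~5.5.4.15]{Lurie2009} for (a), verify stability under $\kappa$-filtered colimits for (b), and deduce (c) and (d). The only differences are minor: for (b) the paper argues via $\kappa$-compactness of the domains and codomains of the localizing morphisms (a $\kappa$-small colimit of representables is $\kappa$-compact) rather than your pointwise commutation of $\kappa$-filtered colimits with $\kappa$-small limits, and for (c)--(d) it simply cites \autocite[Corollary~5.5.7.3]{Lurie2009} where you unpack that argument by hand.
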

\begin{proof}
    If we prove~\ref{itm:models-adjunction} and (b), then (c) and (d) follow from~\autocite[Corollary 5.5.7.3]{Lurie2009}.
    Let us start by proving~\ref{itm:models-adjunction}.
    By~\autocite[Proposition 5.5.4.15]{Lurie2009} and since $\Fun{\cA}{\Spaces}$ is presentable, if we find a set of morphisms $M$ such that $\Mod (\Sigma) = \Loc (\Fun{\cA}{\Spaces}, M)$, then we may infer that $\Mod (\Sigma)$ is presentable and an accessible reflective localization of $\Fun{\cA}{\Spaces}$.

    Let $\alpha \colon \cK^\triangleleft \to \cA$ be a cone of $\bL$, with cone point $x \in \cA$ and diagram $D \colon \cK \to \cA$.
    Consider the composition of the cocone $\Op{\alpha}$ with the Yoneda embedding:
    \[
        \coYnd{\cA} \circ \Op{\alpha} :
        \Op{(\cK^\triangleleft)} \simeq {(\Op{\cK})}^\triangleright \longrightarrow \Op{\cA} \longrightarrow \Fun{\cA}{\Spaces}.
    \]
    Since $\Fun{\cA}{\Spaces}$ is cocomplete, the diagram $\coYnd{\cA} \circ \Op{D}$ has a colimit, denoted by $\colim_{\Op{\cK}} \coYnd{\cA} \circ \Op{D}$.
    By the universal property of colimits, since $\coYnd{\cA} \circ \Op{\alpha}$ is also a cocone with diagram $\coYnd{\cA} \circ \Op{D}$, there exist a natural transformation of functors
    \[
        m_\alpha \colon
        \colim_{\Op{\cK}} \, \coYnd{\cA} \circ \Op{D}
        \longto
        \coYnd{\cA} (x).
    \]
    We pick the collection of morphisms $M = {\{m_\alpha\}}_{\alpha \in \bL}$.
    Observe that $M$ is a set of the same cardinality as~$\bL$.
    A functor $F \colon \cA \to \Spaces$ is $M$-local if, for every $m_\alpha \in M$, the induced map
    \[
        m_\alpha^* \colon
        \Map{} (
        \coYnd{\cA} (x),
        F
        )
        \longto
        \Map{} (
        \colim_{\Op{\cK}} \, \coYnd{\cA} \circ \Op{D}, \,
        F
        )
    \]
    is an equivalence.
    We want to prove that $\Mod (\Sigma)$ coincides with the full subcategory of $\Fun{\cA}{\Spaces}$ consisting of $M$-local objects.
    Given a functor $F \colon \cA \to \Spaces$, we need to show that $F$ sends cones of $\bL$ to limit cones in $\Spaces$ if and only if $F$ is $M$-local.

    Let $F \colon \cA \to \Spaces$ be a functor and $\alpha \colon \cK^\triangleleft \to \cA$ be a cone of $\cL$, with cone point $x \in \cA$ and diagram $D \colon \cK \to \cA$.
    Since $\Spaces$ is complete, the diagram $F \circ D \colon \cK \to \Spaces$ has a limit.
    By the universal property of limits applied to the cone $F \circ \alpha \colon \cK^\triangleleft \to \Spaces$, there exists a map
    \[
        t_\alpha \colon F x \longto \lim_\cK (F \circ D).
    \]
    Then, $t_\alpha$ is an equivalence if and only if $F \in \Mod (\Sigma)$.
    We can repeat the previous process with the functor $\Map{\sFun{\cA}{\Spaces}} (\coYnd{\cA}(-), F)$ instead of $F$ to obtain a map
    \[
        \widehat{t}_\alpha \colon \Map{\sFun{\cA}{\Spaces}} (\coYnd{\cA}(x), F) \longto \lim_\cK \,\Map{\sFun{\cA}{\Spaces}} (\coYnd{\cA}(D(-)), F).
    \]
    By the naturality of the Yoneda embedding, $\widehat{t}_\alpha$ is an equivalence if and only if $t_\alpha$ is one.

    Thus, to show that $F \in \Mod (\Sigma)$ if and only if $F$ is $M$-local, it is sufficient to check that, for every $\alpha \in \bL$, $\widehat{t}_\alpha$ is an equivalence if and only if $m_\alpha^*$ is one.
    Observe that the functor $\Map{} (-, F) $ can be expressed as
    \[
        \Map{\sFun{\cA}{\Spaces}} (-, F)
        \cong \Ynd{\sFun{\cA}{\Spaces}}(F)
        \cong \Ev_F \circ\, \coYnd{\sFun{\cA}{\Spaces}},
    \]
    and hence it sends colimits to limits, since $\Ev_F$ preserves limits~\autocite[Proposition 5.1.2.3]{Lurie2009} and $\coYnd{\sFun{\cA}{\Spaces}}$ sends colimits to limits~\autocite[Proposition 5.1.3.2]{Lurie2009}.
    Thus, $\Map{} (\displaystyle \colim (\coYnd{\cA} \circ D), F)$ is a limit of the diagram $\Map{} (\coYnd{\cA} \circ D, F)$.
    By the uniqueness of limits, there is an equivalence
    \[
        \sigma \colon \Map{} (\displaystyle \colim (\coYnd{\cA} \circ D), F) \longrightarrow \lim \Map{} (\coYnd{\cA} \circ D, F).
    \]
    Since the two objects are limits of $\Map{} (\coYnd{\cA} \circ D, F)$, the following diagram commutes:
    \[
        \begin{tikzcd}[ampersand replacement=\&,cramped]
            {\Map{} (\coYnd{\cA} (x), F)} \&\& {\Map{} (\displaystyle \colim (\coYnd{\cA} \circ D), F)} \\
            \&\& {\displaystyle \lim \, \Map{} (\coYnd{\cA} \circ D, F)\rlap{.}}
            \arrow["{m_\alpha^*}", from=1-1, to=1-3]
            \arrow["{\widehat{t}_\alpha}"'{pos=0.6}, from=1-1, to=2-3]
            \arrow["\sigma"', dashed, from=1-3, to=2-3]
            \arrow["\simeq", dashed, from=1-3, to=2-3]
        \end{tikzcd}
    \]
    Therefore, by the two-out-of-three property, $\widehat{t}_\alpha$ is an equivalence if and only if $m_\alpha^*$ is one, for every $\alpha \in \bL$.

    Thus, it only remains to prove (b), i.e., $\Mod (\Sigma) \subseteq \Fun{\cA}{\Spaces}$ is stable under $\kappa$-filtered colimits.
    Let $\cI$ be a $\kappa$-filtered $\infty$-category, and $F \colon \cI \to \Mod (\Sigma) \subseteq \Fun{\cA}{\Spaces}$ be a $\kappa$-filtered diagram of models of $\Sigma$.
    We want to see that the colimit of $F$ on $\Fun{\cA}{\Spaces}$ is also a model of $\Sigma$, which is equivalent to seeing that the maps
    \[
        m_\alpha^* \colon
        \Map{} (
        \coYnd{\cA} (x), \,
        \colim_\cI F
        )
        \longto
        \Map{} (
        \colim_{\Op{\cK}} \,\coYnd{\cA} \circ \Op{D},\,
        \colim_\cI F
        )
    \]
    are equivalences for every $\alpha \in \bL$, with cone point $x \in \cA$ and diagram $D \colon \cK \to \cA$.

    Recall that $\Fun{\cA}{\Spaces}$ is $\aleph_0$-presentable (see~\autocite[Proposition 5.3.5.12]{Lurie2009}) with the essential image of the Yoneda embedding being the $\aleph_0$-compact objects.
    In particular, since $\aleph_0 \leq \kappa$, every image of the Yoneda embedding is also a $\kappa$-compact object.
    By~\autocite[Corollary 5.3.4.15]{Lurie2009}, a $\kappa$-small colimit of $\kappa$-compact objects is $\kappa$-compact.
    Since $\Sigma$ is a limit $\kappa$-sketch, $\cK$ is $\kappa$-small.
    Thus, $\coYnd{\cA} (x)$ and $\colim_{\Op{\cK}} \coYnd{\cA} \circ \Op{D}$ are $\kappa$-compact objects.
    Therefore, using the commutativity of $\kappa$-filtered colimits with maps from $\kappa$-compact objects, the maps
    \[
        m_\alpha^* = \Map{} (m_\alpha, \, \colim_\cI F) \colon
        \Map{} (
        \coYnd{\cA} (x), \,
        \colim_\cI F
        )
        \longto
        \Map{} (
        \colim_{\Op{\cK}} \,\coYnd{\cA} \circ \Op{D}, \,
        \colim_\cI F
        )
    \]
    are equivalent to
    \[
        \colim_{i\in\cI}  \, \Map{} (m_\alpha, F(i)) \colon
        \colim_{i\in\cI} \,
        \Map{} (
        \coYnd{\cA} (x),
        F(i)
        )
        \longto
        \colim_{i\in\cI} \,
        \Map{} (
        \colim_{\Op{\cK}} \coYnd{\cA} \circ \Op{D}, \,
        F(i)
        ).
    \]
    Since each $F(i)$ is a model of $\Sigma$, the maps $\Map{} (m_\alpha, F(i))$ are equivalences for every $i \in \cI$ and $\alpha \in \bL$.
    Thus, the maps $m_\alpha^* = \colim_{i\in\cI} \, \Map{} (m_\alpha, F(i))$ are equivalences, as we wanted to prove.
\end{proof}

Combining the two previous theorems, we recover the characterization of presentable $\infty$\nobreakdash-cat\-egories as the limit-sketchable ones, and a normalization theorem for limit sketches:

\begin{corollary}\label{cor:main01}
    An $\infty$-category $\cC$ is $\kappa$-presentable, where $\kappa$ is a regular cardinal, if and only if $\cC$ is limit $\kappa$-sketchable.
\end{corollary}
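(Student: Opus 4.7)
The plan is simply to package the two preceding theorems together. No new idea is needed: the equivalence is the combination of the implications already established, so the work consists of verifying that their statements align and of handling the case $\kappa = \aleph_0$, which falls outside the hypothesis of \autoref{thm:lsketch->presentable}.

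For the forward direction, I would invoke \autoref{thm:presentable->lsketch} directly. That theorem constructs, from any $\kappa$-presentable $\cC \simeq \Ind_\kappa(\cA)$, the limit $\kappa$-sketch $\Sigma = (\Op{\cA}, \bL)$ whose cones are all $\kappa$-small limit cones of $\Op{\cA}$, and shows $\cC \simeq \Mod(\Sigma)$ via \autoref{cor:presentable<->Cont}. Since $\bL$ consists of $\kappa$-small cones, $\Sigma$ is a limit $\kappa$-sketch, so $\cC$ is limit $\kappa$-sketchable (and in fact normally so).

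For the backward direction with $\kappa$ uncountable, I would cite part~(d) of \autoref{thm:lsketch->presentable}, which asserts that $\Mod(\Sigma)$ is $\kappa$-presentable for any limit $\kappa$-sketch $\Sigma$. Combined with the forward direction, this yields the equivalence in that case.

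The only subtle point is $\kappa = \aleph_0$, which is excluded from the hypotheses of \autoref{thm:lsketch->presentable}. Here I would note that the proof of \autoref{thm:lsketch->presentable} uses the cardinal $\kappa$ only to conclude $\kappa$-compactness of representables from their $\aleph_0$-compactness (via $\aleph_0 \leq \kappa$) and to ensure the diagrams in $\bL$ are $\kappa$-small; both remain valid when $\kappa = \aleph_0$, so the argument transfers verbatim to this case. Alternatively, one could reduce to the uncountable case by arguing that a limit $\aleph_0$-sketch is in particular a limit $\kappa'$-sketch for every uncountable regular $\kappa' > \aleph_0$, showing accessibility and cocompleteness via the uncountable case, and then sharpening to $\aleph_0$-presentability by observing that the stability-under-$\aleph_0$-filtered-colimits argument in step~(b) of \autoref{thm:lsketch->presentable} is insensitive to the uncountability hypothesis. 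I expect no genuine obstacle here; the corollary is essentially a restatement combining (a) the existence of a sketch presenting an $\Ind_\kappa$-category and (b) the fact that local presentability of models is detected by a suitable local presentability of the ambient functor category together with closure under $\kappa$-filtered colimits.
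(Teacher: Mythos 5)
Your proposal is correct and takes essentially the same route as the paper, whose proof of Corollary~\ref{cor:main01} consists precisely of combining Theorem~\ref{thm:presentable->lsketch} (forward direction) with Theorem~\ref{thm:lsketch->presentable}(d) (backward direction). Your additional attention to the case $\kappa=\aleph_0$ is a legitimate and worthwhile observation --- Theorem~\ref{thm:lsketch->presentable} is stated only for uncountable $\kappa$ while the corollary allows any regular cardinal, a mismatch the paper does not address --- and your check that the proof of that theorem uses uncountability nowhere essential (the representables are $\aleph_0$-compact and Lurie's closure of $\kappa$-compacts under $\kappa$-small colimits holds for all regular cardinals) is the right way to close it.
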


\begin{corollary}
    For every limit sketch $\Sigma$, the $\infty$-category $\Mod (\Sigma)$ is complete and cocomplete.
\end{corollary}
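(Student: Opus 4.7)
The plan is to reduce the statement to the $\kappa$-sketchable case, where the conclusion follows directly from the presentability results established earlier in the section. Given an arbitrary limit sketch $\Sigma = (\cA, \bL)$, the first step is to choose an uncountable regular cardinal $\kappa$ making $\Sigma$ into a limit $\kappa$-sketch. Since $\bL$ is a set and each cone $\alpha \in \bL$ has a small diagram $\cK_\alpha \to \cA$, for each $\alpha$ there exists some regular cardinal $\kappa_\alpha$ such that $\cK_\alpha$ is $\kappa_\alpha$-small. Taking $\kappa$ to be any uncountable regular cardinal exceeding the supremum of the $\kappa_\alpha$ (which exists because $\bL$ is a set) makes every diagram indexing a cone of $\bL$ into a $\kappa$-small diagram, so that $\Sigma$ becomes a limit $\kappa$-sketch with $\kappa$ uncountable.

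With this choice of $\kappa$, Theorem~\ref{thm:lsketch->presentable}\,(a) (or equivalently Corollary~\ref{cor:main01}) immediately yields that $\Mod(\Sigma)$ is presentable. The conclusion then follows from the standard fact, recalled in the preliminaries via \autocite[Corollary 5.5.2.4]{Lurie2009}, that every presentable $\infty$-category is complete as well as cocomplete.

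There is no substantive obstacle in this argument; it is essentially a bookkeeping step (the choice of a uniform bounding $\kappa$) followed by an appeal to earlier results. An alternative route would be to establish completeness directly from Proposition~\ref{prop:modiscomplete} applied with $\cC = \Spaces$ (which requires no size reduction at all, since that proposition is stated for arbitrary limit sketches), and to invoke Corollary~\ref{cor:main01} only for cocompleteness. Either presentation works; the unified approach via presentability is slightly cleaner, while the split approach makes transparent that the completeness half needs neither the uncountability of $\kappa$ nor the full strength of Theorem~\ref{thm:lsketch->presentable}.
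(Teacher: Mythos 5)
Your proposal is correct and matches the paper's intent: the corollary is stated without proof as an immediate consequence of Theorem~\ref{thm:lsketch->presentable} (equivalently Corollary~\ref{cor:main01}) together with the fact that presentable $\infty$-categories are complete and cocomplete, and your bookkeeping step of choosing a uniform uncountable regular cardinal $\kappa$ bounding the sizes of all cone diagrams is exactly the implicit reduction needed. Nothing is missing.
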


\begin{corollary}
    For every limit $\kappa$-sketch $\Sigma$, there exists a normal limit $\kappa$-sketch $\Theta$ such that
    $
        \Mod (\Sigma) \simeq \Mod (\Theta).
    $
\end{corollary}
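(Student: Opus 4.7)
The proof will be a direct two-step chaining of the two theorems established just above in this section, namely Theorem~\ref{thm:lsketch->presentable} and Theorem~\ref{thm:presentable->lsketch}, applied in sequence to go from an arbitrary limit $\kappa$-sketch to a $\kappa$-presentable $\infty$-category and back to a normal limit $\kappa$-sketch of the same size.

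The plan is as follows. Given a limit $\kappa$-sketch $\Sigma = (\cA, \bL)$, I would first invoke Theorem~\ref{thm:lsketch->presentable}(d) to conclude that the $\infty$-category of models $\Mod (\Sigma)$ is $\kappa$-presentable (under the standing hypothesis that $\kappa$ is uncountable, as in that theorem). This step is where the work has already been done: all of the local-presentability, reflectivity, and stability-under-$\kappa$-filtered-colimits machinery has been bundled into that statement, so I only need to cite it.

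Next, I would apply Theorem~\ref{thm:presentable->lsketch} to the $\kappa$-presentable $\infty$-category $\Mod (\Sigma)$. By that theorem, there exists a normal limit $\kappa$-sketch $\Theta$ with $\Mod(\Theta) \simeq \Mod(\Sigma)$. Concretely, one can take $\Theta = (\Op{\cB}, \bL_\Theta)$, where $\cB$ is a small $\infty$-category admitting $\kappa$-small colimits such that $\Mod(\Sigma) \simeq \Ind_\kappa(\cB)$, and $\bL_\Theta$ is the set of all limit cones of $\kappa$-small diagrams in $\Op{\cB}$; by construction each cone in $\bL_\Theta$ is an actual limit cone, so $\Theta$ is normal, and it is $\kappa$-small because every diagram in $\bL_\Theta$ is $\kappa$-small. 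Composing the two equivalences yields $\Mod(\Theta) \simeq \Mod(\Sigma)$, as desired.

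The only genuinely delicate point is bookkeeping: I must verify that the $\kappa$ given by Theorem~\ref{thm:presentable->lsketch} when applied to $\Mod(\Sigma)$ can be chosen to be the same cardinal as the one for which $\Sigma$ is a limit $\kappa$-sketch. This is precisely the content of Theorem~\ref{thm:lsketch->presentable}(d), which asserts $\kappa$-presentability (not merely accessibility or presentability at some larger cardinal), so this bookkeeping is already taken care of. Hence no additional argument beyond these two citations is needed; the corollary is a formal consequence of the two theorems that have just been proved.
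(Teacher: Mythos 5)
Your proposal is correct and follows exactly the paper's own argument: apply Theorem~\ref{thm:lsketch->presentable}(d) to deduce that $\Mod(\Sigma)$ is $\kappa$-presentable, then apply Theorem~\ref{thm:presentable->lsketch} to produce the normal limit $\kappa$-sketch $\Theta$ (the paper takes $\Theta = (\Op{\Mod^\kappa(\Sigma)}, \bL)$ with $\bL$ the set of all $\kappa$-small limit cones, matching your explicit description). Your remark about the cardinal bookkeeping is exactly the point that makes the two-step composition work, and the paper relies on it in the same implicit way.
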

\begin{proof}
    By~\autoref{thm:lsketch->presentable}, we know that $\Mod (\Sigma)$ is presentable.
    Thus, \autoref{thm:presentable->lsketch} yields a normal limit sketch $\Theta = (\Op{\Mod^\kappa (\Sigma)}, \bL)$ such that $\Mod (\Sigma) \simeq \Mod (\Theta)$, where $\Mod^\kappa (\Sigma)$ are the $\kappa$-compact objects in $\Mod (\Sigma)$ and $\bL$ is the set of all $\kappa$-small limits of $\Mod^\kappa (\Sigma)$.
\end{proof}

\begin{example}
    Every $\infty$-topos is a left-exact accessible reflective localization of $\PSh (\cA)$ for some small $\infty$-category~$\cA$.
    Therefore, \autoref{thm:presentable->lsketch} implies that every $\infty$-topos is limit-sketchable.

    The $\infty$-category $\Sh (\cA, \cT)$ of sheaves on a small $\infty$-category $\cA$ equipped with a Grothendieck topology $\cT$ is a special case.
    A~more explicit sketch whose $\infty$-category of models is equivalent to $\Sh (\cA, \cT)$ has been given in Example~\ref{ex:sheaves}.
\end{example}

Let $\cC$ and $\cD$ be two presentable $\infty$-categories.
According to~\autocite[Section 4.8.1]{Lurie2017}, the $\infty$-category of presentable categories has a symmetric monoidal structure given by the Lurie tensor product, which is defined as
\[
    \cC \otimes \cD
    = \Op{\FunL{\cC}{\Op{\cD}}},
\]
and has $\Spaces$ as unit.
Given two limit sketches $\Sigma = (\cA, \bA)$ and $T = (\cB, \bB)$, we can define the following limit sketch~(see~\autocite[Exercise 1.l.2]{Adamek1994} and~\autocite[Notation 4.8.1.7]{Lurie2017}):
\[
    \Sigma \boxtimes T = (
    \cA \times \cB,\;
    \bA \boxtimes \bB
    )
    \quad\text{and}\quad
    \bA \boxtimes \bB = (\bA \times \Obj (\cB))
    \sqcup (\Obj(\cA) \times \bB).
\]
Let $\kappa$ and $\lambda$ be two regular cardinals, and let $\mu = \max (\kappa, \lambda)$.
Observe that, given a limit $\kappa$-sketch $\Sigma$ and a limit $\lambda$-sketch $T$, all cones in the sketch $\Sigma \boxtimes T$ have $\mu$-small diagrams.
Hence, $\Sigma \boxtimes T$ is a limit $\mu$-sketch.

Let $\mathbb{1} = (\sDelta{0}, \emptyset)$ denote the trivial sketch, which has models $\Mod (\mathbb{1}) = \Fun{\sDelta{0}}{\Spaces} = \Spaces$.
Then, taking models of a limit sketch built with the $\boxtimes$ construction is compatible with the tensor product of presentable $\infty$-categories in the following way:

\begin{proposition}\label{prop:odot}
    For limit sketches $\Sigma = (\cA, \bA)$ and $T = (\cB, \bB)$, there are equivalences
    \[\begin{gathered}
            \Mod (\Sigma \boxtimes T)
            \simeq \Mod(\Sigma,\Mod (T))
            \simeq \Mod (\Sigma) \otimes \Mod (T)\rlap{, \text{ and}}\\
            \Mod (\Sigma \boxtimes \mathbb{1})
            \simeq \Mod (\Sigma)
            \simeq \Mod (\Sigma) \otimes \Spaces.
        \end{gathered}\]
\end{proposition}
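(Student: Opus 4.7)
For the first equivalence $\Mod (\Sigma \boxtimes T) \simeq \Mod(\Sigma, \Mod(T))$, I would exploit the currying equivalence $\Fun{\cA \times \cB}{\Spaces} \simeq \Fun{\cA}{\Fun{\cB}{\Spaces}}$, sending a functor $F$ to its curried form $\tilde F$. The cones $\alpha \times \{b\}$ in $\bA \times \Obj(\cB)$ are all sent to limit cones by $F$ if and only if $\tilde F$ sends each $\alpha \in \bA$ to a pointwise limit cone in $\Fun{\cB}{\Spaces}$; the cones $\{a\} \times \beta$ in $\Obj(\cA) \times \bB$ are all sent to limit cones if and only if, for each $a \in \cA$, the functor $\tilde F(a)$ is a model of $T$, i.e.\ $\tilde F$ factors through $\Mod(T)$. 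By Theorem~\ref{thm:lsketch->presentable}, the inclusion $\Mod(T) \hookrightarrow \Fun{\cB}{\Spaces}$ is a right adjoint and therefore preserves limits, so the pointwise-limit condition in $\Fun{\cB}{\Spaces}$ coincides with the limit condition in $\Mod(T)$. Combining the two pieces, $F$ is a model of $\Sigma \boxtimes T$ if and only if $\tilde F$ is a model of $\Sigma$ in $\Mod(T)$.

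For the second equivalence $\Mod(\Sigma, \Mod(T)) \simeq \Mod(\Sigma) \otimes \Mod(T)$, I would use that $\Mod(\Sigma) = \Loc(\Fun{\cA}{\Spaces}, M)$ is a reflective localization at the set $M = \{m_\alpha\}_{\alpha \in \bA}$ constructed in the proof of Theorem~\ref{thm:lsketch->presentable}. Tensoring the adjunction $L \dashv i$ with $\Mod(T)$ preserves fully faithful right adjoints between presentable $\infty$-categories by~\autocite[\S\,4.8.1]{Lurie2017}, yielding a reflective embedding $\Mod(\Sigma) \otimes \Mod(T) \hookrightarrow \Fun{\cA}{\Spaces} \otimes \Mod(T) \simeq \Fun{\cA}{\Mod(T)}$, where the last equivalence follows from the universal property of $\Fun{\cA}{\Spaces} = \PSh(\Op{\cA})$ as the free presentable $\infty$-category on $\Op{\cA}$. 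To identify the essential image, I would reprise the Yoneda-based calculation of Theorem~\ref{thm:lsketch->presentable} internally to $\Mod(T)$ and verify that a functor $F \colon \cA \to \Mod(T)$ is local with respect to the tensored maps $M \otimes \mathrm{id}$ if and only if it sends each cone of $\bA$ to a limit cone in $\Mod(T)$, i.e.\ $F \in \Mod(\Sigma, \Mod(T))$.

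The second displayed line then follows by specializing to the trivial sketch: since $\Mod(\mathbb{1}) = \Fun{\sDelta{0}}{\Spaces} \simeq \Spaces$, the first chain gives $\Mod(\Sigma \boxtimes \mathbb{1}) \simeq \Mod(\Sigma, \Spaces) = \Mod(\Sigma)$, and $\Mod(\Sigma) \otimes \Spaces \simeq \Mod(\Sigma)$ because $\Spaces$ is the unit of the Lurie tensor product.

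The main obstacle will be the identification of local objects in the second equivalence: while the formal interaction of the presentable tensor product with reflective localizations is standard, translating the $M$-locality condition across $\Fun{\cA}{\Spaces} \otimes \Mod(T) \simeq \Fun{\cA}{\Mod(T)}$ into the limit-cone condition on $\Mod(T)$-valued functors demands a careful enhancement of the $\Spaces$-valued Yoneda argument used in the proof of Theorem~\ref{thm:lsketch->presentable} to the $\Mod(T)$-enriched setting.
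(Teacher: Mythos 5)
Your proposal is correct, and its two halves relate differently to the paper's proof. The reduction of the second display to the case $T = \mathbb{1}$ and the currying argument for $\Mod(\Sigma \boxtimes T) \simeq \Mod(\Sigma, \Mod(T))$ are essentially identical to the paper's (your explicit remark that $\Mod(T) \subseteq \Fun{\cB}{\Spaces}$ is closed under limits, so that pointwise limit cones agree with limit cones in $\Mod(T)$, is a detail the paper leaves implicit). For the equivalence with the tensor product, however, you take a genuinely different route: the paper never invokes the localization description of $\Mod(\Sigma)$ here, but instead unwinds the definition $\Mod(\Sigma) \otimes \Mod(T) = \Op{\FunL{\Mod(\Sigma)}{\Op{\Mod(T)}}}$, passes to $\FunR{\Op{\Mod(T)}}{\Mod(\Sigma)}$ via~\autocite[Proposition 5.2.6.2]{Lurie2009}, and swaps the two variables using cartesian closedness of $\iCat$ to arrive at $\FunInv{\cA}{\Spaces \otimes \Mod(T)}{\bA} \simeq \FunInv{\cA}{\Mod(T)}{\bA} = \Mod(\Sigma,\Mod(T))$, using only the unit axiom for $\otimes$. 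This is purely formal and requires no identification of local objects. Your route --- tensoring the reflective localization $L \dashv i$ of Theorem~\ref{thm:lsketch->presentable} with $\Mod(T)$ and identifying the local objects of $\Fun{\cA}{\Spaces} \otimes \Mod(T) \simeq \Fun{\cA}{\Mod(T)}$ --- is viable, and you correctly flag its real cost: you must rerun the Yoneda computation against the corepresenting objects $\coYnd{\cA}(x) \otimes G$ for $G$ ranging over a generating set of $\Mod(T)$, checking that locality with respect to all $m_\alpha \otimes \Id_G$ amounts to $F(x) \to \lim_{\cK} F \circ D$ being invertible in $\Mod(T)$; you also rely on the (true but not entirely trivial, and not explicitly stated in the form you need in~\autocite[\S\,4.8.1]{Lurie2017}) fact that the Lurie tensor product carries accessible reflective localizations to accessible reflective localizations. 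What your approach buys in exchange is an explicit presentation of $\Mod(\Sigma)\otimes\Mod(T)$ as a localization of a functor category at an explicit set of maps, which the paper's formal manipulation does not provide.
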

\begin{proof}
    The second chain of equivalences follows from the first one by taking $T$ to be $\mathbb{1}$.
    Let $\FunInv{\cA \times \cB}{\Spaces}{\bA \, \boxtimes\, \bB}$ denote the full subcategory $\Fun{\cA \times \cB}{\Spaces}$ spanned by those functors sending the cones of $\bA$ to limits in the first variable, and the cones of $\bB$ to limits in the second.
    First observe that
    \[
        \Mod (\Sigma, \Mod (T))
        = \FunInv{\cA}{\FunInv{\cB}{\Spaces}{\bB}}{\bA}
        \simeq \FunInv{\cA \times \cB}{\Spaces}{\bA\, \boxtimes\, \bB}
        = \Mod (\Sigma \boxtimes T),
    \]
    where the second equivalence follows since $\iCat$ is cartesian closed.
    Recall that by~\autocite[Proposition 5.2.6.2]{Lurie2009}, there is an equivalence $\Op{\FunL{\cC}{\Op{\cD}}} \simeq \FunR{\Op{\cD}}{\cC}$ for any $\infty$-categories $\cC$ and $\cD$.
    Then, using the previous equivalence together with the unit of the tensor and the fact that $\iCat$ is cartesian closed, the proposition follows from the following chain of equivalences:
    \[\begin{aligned}
            \Mod (\Sigma) \otimes \Mod (T)
             & = \Op{\FunL{\Mod (\Sigma)}{\Op{\Mod (T)}}}
            \simeq \FunInv{\Op{\Mod (T)}}{\Mod (\Sigma)}{R}                        \\
             & = \FunInv{\Op{\Mod (T)}}{\FunInv{\cA}{\Spaces}{\bA}}{R}
            \simeq \FunInv{\Op{\Mod (T)} \times \cA}{\Spaces}{R \,\boxtimes\, \bA} \\
             & \simeq \FunInv{\cA}{\FunInv{\Op{\Mod (T)}}{\Spaces}{R}}{\bA}
            \simeq \FunInv{\cA}{\Op{\FunInv{\Spaces}{\Op{\Mod (T)}}{L}}}{\bA}      \\
             & = \FunInv{\cA}{\Spaces \otimes \Mod (T)}{\bA}
            \simeq \FunInv{\cA}{\Mod (T)}{\bA} = \Mod (\Sigma, \Mod (T)),
        \end{aligned}\]
    where $\FunInv{\Op{\Mod (T)} \times \cA}{\Spaces}{R\, \boxtimes\, \bA}$ denotes the full subcategory $\Fun{\Op{\Mod (T)} \times \cA}{\Spaces}$ spanned by those functors which are right adjoints in the first variable, and send the cones of $\bA$ to limits in the second.
\end{proof}

The previous result can be used to generalize~\autoref{thm:lsketch->presentable} to models over any presentable $\infty$-category:

\begin{corollary}
    Let $\kappa$ and $\lambda$ be two regular cardinals, and let $\mu = \max (\kappa, \lambda)$.
    If $\cC$ is a $\kappa$\nobreakdash-pres\-entable $\infty$-category, then, for every limit $\lambda$-sketch $\Sigma = (\cA, \bL)$, the $\infty$-category of models $\Mod (\Sigma, \cC)$ over $\cC$ is $\mu$-presentable, and it is equivalent to $\Mod (\Sigma) \otimes \cC$.
\end{corollary}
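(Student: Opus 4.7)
The strategy is to reduce the corollary to an application of Proposition \ref{prop:odot} by first representing $\cC$ itself as the $\infty$-category of models of a limit sketch. Since $\cC$ is $\kappa$-presentable, Corollary \ref{cor:main01} furnishes a limit $\kappa$-sketch $T = (\cB, \bB)$ together with an equivalence $\cC \simeq \Mod(T)$. The whole argument then runs through the combined sketch $\Sigma \boxtimes T$, piggybacking on the equivalences already packaged in Proposition \ref{prop:odot}.

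For the tensor identification, I would substitute $\cC \simeq \Mod(T)$ into the domain and then chain through Proposition \ref{prop:odot}:
\[
\Mod(\Sigma, \cC) \simeq \Mod(\Sigma, \Mod(T)) \simeq \Mod(\Sigma \boxtimes T) \simeq \Mod(\Sigma) \otimes \Mod(T) \simeq \Mod(\Sigma) \otimes \cC.
\]
This establishes the second claim and, as a bonus, exhibits $\Mod(\Sigma, \cC)$ as the $\infty$-category of models of the explicit limit sketch $\Sigma \boxtimes T$, which will feed directly into the presentability argument.

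For $\mu$-presentability, I would invoke the observation recorded just before Proposition \ref{prop:odot}: since $\Sigma$ is a limit $\lambda$-sketch and $T$ a limit $\kappa$-sketch, every cone appearing in $\Sigma \boxtimes T$ has a $\mu$-small diagram, so $\Sigma \boxtimes T$ is a limit $\mu$-sketch with $\mu = \max(\kappa, \lambda)$. Applying Corollary \ref{cor:main01} in the converse direction to $\Sigma \boxtimes T$ then forces $\Mod(\Sigma \boxtimes T)$ to be $\mu$-presentable, and transporting across the equivalence of the previous paragraph yields the claimed $\mu$-presentability of $\Mod(\Sigma, \cC)$. The main technical subtlety lies in the finite case $\mu = \aleph_0$: Theorem \ref{thm:lsketch->presentable} is stated under the assumption that $\mu$ is uncountable, so one should either appeal directly to Corollary \ref{cor:main01} as formulated for arbitrary regular cardinals, or bump $\mu$ up to a convenient uncountable cardinal and observe that $\Mod(\Sigma, \cC)$ remains presentable at that larger cardinal. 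Apart from this bookkeeping point, the proof is essentially a formal consequence of Proposition \ref{prop:odot} and Corollary \ref{cor:main01}.
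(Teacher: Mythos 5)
Your proposal is correct and follows essentially the same route as the paper: represent $\cC$ as $\Mod(T)$ for a limit $\kappa$-sketch $T$ (the paper cites Theorem~\ref{thm:presentable->lsketch} directly), chain through Proposition~\ref{prop:odot} to get $\Mod(\Sigma,\cC)\simeq\Mod(\Sigma\boxtimes T)\simeq\Mod(\Sigma)\otimes\cC$, and conclude $\mu$-presentability from $\Sigma\boxtimes T$ being a limit $\mu$-sketch via Theorem~\ref{thm:lsketch->presentable}. Your remark about the uncountability hypothesis in Theorem~\ref{thm:lsketch->presentable} when $\mu=\aleph_0$ is a fair bookkeeping point that the paper's own proof glosses over.
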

\begin{proof}
    By~\autoref{thm:presentable->lsketch}, since $\cC$ is $\kappa$-presentable, it is equivalent to $\Mod (T)$ for a normal limit $\kappa$-sketch~$T$.
    Then it follows from Proposition~\ref{prop:odot} that
    \[
        \Mod (\Sigma, \cC) \simeq \Mod (\Sigma \boxtimes T) \simeq \Mod (\Sigma) \otimes \cC,
    \]
    where $\Sigma \boxtimes T$ is a limit $\mu$-sketch by definition.
    Hence, using~\autoref{thm:lsketch->presentable}, we conclude that $\Mod (\Sigma, \cC)$ is $\mu$-presentable.
\end{proof}

From the examples given in Section~\ref{sec:sketches},
we can conclude, using Theorem~\ref{thm:lsketch->presentable},
that the following full subcategories of any presentable $\infty$-category $\cC$ are presentable:
pointed objects, spectrum objects, pre-category objects, univalent category objects, monoid objects, groupoid objects, group objects, commutative monoid objects, and abelian group objects.

Consequently, the following $\infty$-categories are presentable: pointed $\infty$-groupoids, spectra, Segal spaces, complete Segal spaces, $A_\infty$-spaces,
grouplike $A_\infty$-spaces, $A_\infty$-ring spectra,  $E_\infty$-spaces, infinite loop spaces, $E_\infty$-ring spectra, and higher sheaves over any Grothendieck topology.

Although these categories are extensively discussed in various forms throughout the literature, their sketchability is seldom explicitly addressed.
The examples in Section~\ref{sec:sketches} of this article draw on similar examples from unpublished work of Joyal~\autocite{JoyalChicago,JoyalCRM}.
Our treatment of complete Segal spaces, dendroidal Segal spaces, and complete dendroidal Segal spaces and higher sheaves in Examples~\ref{ex:comsegal},~\ref{ex:dendroidal},~\ref{ex:oper-object} and~\ref{ex:sheaves} is~new.

\section{\texorpdfstring{Sketchable $\infty$-categories}{Sketchable infinity-categories}}%
\label{sec:sketchable}

In this section, we describe the relation between accessibility and sketchability.
Specifically, we prove that $\Mod (\Sigma, \cC)$ is accessible when $\cC$ is presentable, and that every accessible $\infty$-category can be modeled by a normal sketch.

\begin{theorem}\label{thm:accessible->sketch}
    Every $\kappa$-accessible $\infty$-category is normally $\kappa$-sketchable.
\end{theorem}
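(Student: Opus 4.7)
My plan is to adapt the proof of Theorem~\ref{thm:presentable->lsketch} by passing to a $\kappa$-cocomplete enlargement of $\cA$ and then using cocones to carve out $\Ind_\kappa(\cA)$ from the ambient presheaf category. First, I would use Theorem~\ref{thm:accessible} to write $\cC \simeq \Ind_\kappa(\cA)$ with $\cA = \cC^{\kappa}$ small. Since $\cA$ need not be $\kappa$-cocomplete, I would form the free $\kappa$-small colimit cocompletion $\widehat{\cA}$, realized as the closure of the Yoneda image of $\cA$ in $\PSh(\cA)$ under $\kappa$-small colimits; this $\widehat{\cA}$ is small and $\kappa$-cocomplete, and its universal property together with Corollary~\ref{cor:presentable<->Cont} yields an equivalence $\Cont_\kappa(\Op{\widehat{\cA}}) \simeq \PSh(\cA)$ via restriction along the inclusion $\cA \hookrightarrow \widehat{\cA}$. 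Under this equivalence, Theorem~\ref{thm:flat<->filtcolim} and Corollary~\ref{lem:acc-flat} identify $\Ind_\kappa(\cA)$ with the full subcategory of $\kappa$-flat presheaves on~$\cA$.

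Next I would define a sketch $\Sigma = (\Op{\widehat{\cA}}, \bL, \bC)$ whose cone set $\bL$ consists of all $\kappa$-small limit cones of $\Op{\widehat{\cA}}$ (present by $\kappa$-cocompleteness of $\widehat{\cA}$) and whose cocone set $\bC$ is designed to cut the resulting $\kappa$-continuous functors down to those that correspond, under the equivalence above, to $\kappa$-flat presheaves. As in Theorem~\ref{thm:presentable->lsketch}, the limit part alone already produces $\Cont_\kappa(\Op{\widehat{\cA}}) \simeq \PSh(\cA)$, so the role of $\bC$ is entirely to encode the flatness condition. Using Theorem~\ref{thm:flat<->filtcolim}(iii), this amounts to enforcing, for each $x \in \widehat{\cA}$, that every element of $F(x)$ factor through some representable $\Ynd{\cA}(a)$ equipped with a morphism $x \to \Ynd{\cA}(a)$ in $\widehat{\cA}$. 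Concretely, I would take $\bC$ to be the collection of canonical cocones in $\Op{\widehat{\cA}}$ indexed by the comma categories of morphisms $x \to a$ with $a \in \cA$, chosen so that sending them to colimit cocones in $\Spaces$ is equivalent to the factorization property.

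I would then verify that $\Sigma$ is $\kappa$-small (only cones in $\bL$ are required to have $\kappa$-small diagrams), that $\Sigma$ is normal (cones in $\bL$ are genuine limit cones by construction, and cocones in $\bC$ are genuine colimit cocones by choice), and that $\Mod(\Sigma) \simeq \Ind_\kappa(\cA) \simeq \cC$, the last identification following from translating the $\kappa$-filteredness of $\hRelSlice{\cA}{F}$ into the preservation of the cocones of $\bC$ by the associated $\kappa$-continuous functor $\tilde{F} \colon \Op{\widehat{\cA}} \to \Spaces$.

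The main obstacle is calibrating $\bC$ so that normality and the $\kappa$-flatness criterion are simultaneously met: the designated cocones must both exhibit each $x \in \widehat{\cA}$ as a genuine colimit in $\Op{\widehat{\cA}}$ and, at the level of models, encode the existence of cocones in $\hRelSlice{\cA}{F}$ demanded by Theorem~\ref{thm:flat<->filtcolim}. This is the $\infty$-categorical refinement of Lair's classical argument, and lifting the set-level existence of cocones to a fully coherent colimit-preservation statement for sketch models requires careful handling of the coherence data peculiar to the $\infty$-categorical setting.
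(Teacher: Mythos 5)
Your plan shares the paper's skeleton --- pass to a $\kappa$-small cocompletion, use the limit cones to recover a presheaf category, then carve out the $\kappa$-flat presheaves --- but it swaps the roles the paper assigns to cones and cocones, and the swap is where the argument breaks. In the paper's proof, flatness is imposed by the \emph{cones}: by Theorem~\ref{thm:flat<->filtcolim}(ii) it is a limit-preservation condition, so it is encoded by $\kappa$-small limit cones of representables, which $\coYnd{\cA}$ sends to genuine limit cones in the cocompletion $\widehat{\cA}$ of $\Op{\cA}$; the cocones $\bC$ are only the canonical colimit decompositions $f \simeq \colim_{\hRelSlice{\cA}{f}} \coYnd{\cA}\circ \pi$ of the cone points, which \emph{are} colimit cocones in $\widehat{\cA}$ --- that is exactly what makes the sketch normal, and their role is to force a model to be determined by (and left Kan extended from) its restriction to $\Op{\cA}$. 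You instead put flatness on the cocone side: for each $x\in\widehat{\cA}$ you take the cocone in $\Op{\widehat{\cA}}$ over the comma category of maps $x\to\Ynd{\cA}(a)$, with vertex $x$. These are not colimit cocones in $\Op{\widehat{\cA}}$: that would require $x\simeq\lim a$ over $\coSlice{\widehat{\cA}}{x}\times_{\widehat{\cA}}\cA$, i.e.\ codensity of the representables in $\widehat{\cA}$, which fails already for $\cA=\sDelta{0}$ and $x=\ast\sqcup\ast$ (the comma category is contractible and the would-be limit is $\ast\not\simeq x$). So your $\Sigma$ is not normal; asserting normality ``by choice'' does not repair this, and you yourself flag the calibration of $\bC$ as the unresolved main obstacle. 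Since the statement asserts \emph{normal} $\kappa$-sketchability (and the subsequent normalization corollary relies on it), this is a genuine gap rather than a cosmetic one.

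A second, smaller gap: even setting normality aside, the identification of your cocone condition with flatness is not established. Sending the cocone to a colimit demands an equivalence $\colim_{(x\to a)}F(a)\to F(x)$, whereas $\kappa$-filteredness of $\hRelSlice{\cA}{F}$ only visibly requires $\pi_0$-surjectivity of that map. The two conditions are in fact equivalent (the forward direction uses $\kappa$-compactness of $x$ and the coend identity $\colim_{(x\to a)}\Map{\cA}(a,b)\simeq\Map{}(x,\Ynd{\cA}(b))$; the converse is essentially the (ii)$\Rightarrow$(iii) step of Theorem~\ref{thm:flat<->filtcolim}), but this needs to be proved, and it is precisely the coherence issue you defer in your final paragraph. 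The cleanest fix is the paper's arrangement: base the sketch on the free $\kappa$-small cocompletion of $\Op{\cA}$ (not the opposite of that of $\cA$), let $\bL$ encode flatness through $\kappa$-small limits of representables, let $\bC$ be the canonical colimit cocones of the cone points, and conclude $\Mod(\Sigma)\simeq\Ind_\kappa(\cA)$ via Theorem~\ref{lem:acc-flat}.
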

\begin{proof}
    Every $\kappa$-accessible $\infty$-category $\cC$ satisfies $\cC \simeq \Ind_\kappa (\cA)$ for some small $\infty$-category~$\cA$.
    Denote by $\widehat{\cA} \subseteq \Fun{\cA}{\Spaces}$ the free $\kappa$-small cocompletion of $\Op{\cA}$, in the sense of~\autocite{Rezk2022a}.
    Observe that the Yoneda embedding factors through $\widehat{\cA}$ as
    \[
        \coYnd{\cA}
        \colon \Op{\cA}
        \overset{j}{\longto} \widehat{\cA}
        \overset{i}{\longcof} \Fun{\cA}{\Spaces}.
    \]
    The left Kan extension of the Yoneda embedding along itself is equivalent to the identity \autocite[Lemma~5.1.5.3]{Lurie2009}. This yields a canonical colimit for any presheaf $f \in \Fun{\cA}{\Spaces}$:
    \[
        \colim \left(
        \hRelSlice{\cA}{f}
        \overset{\pi}{\longto} \Op{\cA}
        \overset{j}{\longto} \widehat{\cA}
        \overset{i}{\longcof} \Fun{\cA}{\Spaces}
        \right)
        \cong \Lan \coYnd{\cA} (f)
        \cong f,
    \]
    where $\pi \colon \hRelSlice{\cA}{f} \to \Op{\cA}$ is the forgetful functor.
    Then, the restriction of $\Lan \coYnd{\cA}$ to $\widehat{\cA}$ factors through $\widehat{\cA}$ itself as the identity.
    Define a normal sketch $\Sigma = (\widehat{\cA}, \bL, \bC)$ where $\bL$ is the set of limits of $\kappa$-small diagrams in $\Op{\cA}$, and $\bC$ is the set of canonical colimits of the cone points in $\widehat{\cA}$ of all limit cones in~$\bL$.

    We want to show that $\Mod (\Sigma)$ is equivalent to $\cC$.
    The characterization of~\autoref{lem:acc-flat} shows that $\cC$ is equivalent to the $\infty$-category of functors from $\Fun{\cA}{\Spaces}$ to $\Spaces$ preserving colimits and $\kappa$-filtered limits of representables, denoted by $\cE$.
    Hence, it suffices to show that $\Mod (\Sigma)$ is equivalent to $\cE$.
    Consider the restriction of functors of $\cE$ to $\widehat{\cA}$, i.e., the precomposition functor
    \[
        - \circ i
        \colon \cE \longto \Mod (\Sigma).
    \]
    By the universal property of the completion and $\kappa$-small completion of $\Op{\cA}$, we have that
    \[
        \FunL{\Fun{\cA}{\Spaces}}{\Spaces}
        \simeq \Fun{\Op{\cA}}{\Spaces}
        \simeq \FunL{\widehat{\cA}}{\Spaces}
        \cof \FunInv{\widehat{\cA}}{\Spaces}{\bC},
    \]
    where $\FunInv{\widehat{\cA}}{\Spaces}{\bC}$ denotes the $\infty$-category of functors preserving the colimits in $\bC$, and thus the last inclusion forgets about the preservation of other colimits.
    Recall that $\cE$ is the restriction of $\FunL{\Fun{\cA}{\Spaces}}{\Spaces}$ to functors which preserve $\kappa$-small limits of representables. Therefore, the restriction of $\FunL{\widehat{\cA}}{\Spaces}$ to functors preserving $\kappa$-small limits of representables is equal to the image of $- \circ i$, thanks to $\widehat{\cA}$ being the $\kappa$-small completion of $\Op{\cA}$.
    Observe that $\Mod (\Sigma)$ is the full subcategory of $\FunInv{\widehat{\cA}}{\Spaces}{\bC}$ spanned by the functors that preserve $\kappa$-small limits of representables.
    Hence, the image of $\cE$ by $- \circ i$ is a full subcategory of $\Mod (\Sigma)$ given by forgetting the preservation of other colimits, and therefore it is fully faithful.

    Consequently, we only need to show that $- \circ i$ is essentially surjective.
    For each model $F \in \Mod (\Sigma)$, we want to find a functor $G \in \cE$ such that $G \circ i \cong F$.
    Take as candidate the left Kan extension of the restriction $\restrict{F}{\Op{\cA}}$, denoted by
    \[
        G
        = \Lan \big(\restrict{F}{\Op{\cA}}\big)
        = \Lan (F \circ j) \colon \Fun{\cA}{\Spaces} \longrightarrow \Spaces,
    \]
    which preserves colimits by definition.
    Observe that, for every limit cone point $f \in \widehat{\cA}$ of a $\kappa$-small diagram in $\Op{\cA}$, we have
    \[
        G(f)
        = \Lan (F \circ j) (f)
        \cong \colim_{\hRelSlice{\cA}{f}} (F \circ j \circ \pi)
        \cong F (\colim_{\hRelSlice{\cA}{f}} \, j \circ \pi)
        \cong F (f),
    \]
    where the second isomorphism follows from the fact that $F$ preserves the canonical colimits of~$\bC$.
    Since $F$ preserves $\kappa$-small limits of $\Op{\cA}$ by the definition of~$\bL$, and $G$ coincides with $F$ on such presheaves, $G$ also preserves $\kappa$-small limits of $\Op{\cA}$, and therefore it belongs to $\cE$.
    Now, since $G \circ i$ preserves $\kappa$-small limits of $\Op{\cA}$, the universal property of a $\kappa$-small cocompletion of $\Op{\cA}$ implies that $G \circ i \cong F$, as we wanted to prove.
\end{proof}

\begin{theorem}\label{thm:sketch->accessible}
    If $\cC$ is a presentable $\infty$-category, then, for every sketch $\Sigma$, the $\infty$-category of models $\Mod (\Sigma, \cC)$ over $\cC$ is accessible.
\end{theorem}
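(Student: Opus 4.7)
The plan is to reduce the claim to the limit-sketch part of $\Sigma$ and then to carve out the cocone conditions as an intersection of accessible subcategories of the $\infty$-category of models of the limit sketch, defined by invertibility of natural transformations between accessible functors. Write $\Sigma = (\cA, \bL, \bC)$ and set $\Sigma_L = (\cA, \bL)$. By the corollary following \autoref{prop:odot}, $\Mod(\Sigma_L, \cC)$ is presentable, in particular accessible, and its inclusion into $\Fun{\cA}{\cC}$ preserves $\kappa$-filtered colimits for some regular cardinal $\kappa$ (to be enlarged below if necessary). By the description of models via the comparison maps in~\eqref{eq:sketch}, $\Mod(\Sigma, \cC)$ is precisely the full subcategory of $\Mod(\Sigma_L, \cC)$ spanned by those $F$ for which
\[
    u_\beta(F) \colon \colim_{\cH_\beta}(F \circ E_\beta) \longto F y_\beta
\]
is an equivalence for every cocone $\beta \in \bC$, with diagram $E_\beta \colon \cH_\beta \to \cA$ and cocone point $y_\beta$.

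For each such $\beta$ I would introduce the pair of functors $\Phi_\beta, \Psi_\beta \colon \Mod(\Sigma_L, \cC) \to \cC$ defined by $\Phi_\beta(F) = \colim_{\cH_\beta}(F \circ E_\beta)$ and $\Psi_\beta(F) = \Ev_{y_\beta}(F)$, together with the natural transformation $u_\beta \colon \Phi_\beta \Rightarrow \Psi_\beta$ produced by the universal property of the colimit. After enlarging $\kappa$ so that every $\cH_\beta$ is $\kappa$-small, both $\Phi_\beta$ and $\Psi_\beta$ are $\kappa$-accessible: the evaluation $\Ev_{y_\beta}$ and precomposition with $E_\beta$ preserve all colimits, the functor $\colim_{\cH_\beta}$ is a left adjoint, and the inclusion $\Mod(\Sigma_L, \cC) \cof \Fun{\cA}{\cC}$ preserves $\kappa$-filtered colimits by the choice of $\kappa$.

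Finally, the full subcategory $\cC_\beta \subseteq \Mod(\Sigma_L, \cC)$ on which $u_\beta$ becomes an equivalence is accessible: it can be exhibited as a pullback of accessible $\infty$-categories along accessible functors, using that the subcategory of equivalences inside $\Fun{\sDelta{1}}{\cC}$ is accessible, and that $\iCat$ admits small limits of accessible $\infty$-categories along accessible functors, which are again accessible by \autocite[Proposition 5.4.7.3]{Lurie2009}. Since $\bC$ is a small set, the intersection $\Mod(\Sigma, \cC) = \bigcap_{\beta \in \bC} \cC_\beta$ is in turn a small limit in $\iCat$ of accessible $\infty$-categories joined by accessible functors, and hence accessible. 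The principal obstacle will be tracking the cardinals uniformly so that a single regular $\kappa$ witnesses the accessibility of every $\Phi_\beta$, $\Psi_\beta$, and $\cC_\beta$ simultaneously, and verifying that the resulting pullback presentation of $\Mod(\Sigma, \cC)$ satisfies the hypotheses of Lurie's closure result for accessible limits.
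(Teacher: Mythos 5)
Your proposal is correct and follows essentially the same route as the paper: isolate the limit part $\Sigma_L$ (whose models are presentable), encode each cocone condition as a natural transformation into the arrow category and pull back against the full subcategory of equivalences, then conclude via Lurie's closure of accessible $\infty$-categories under small limits along accessible functors. The only cosmetic difference is that you carve out the cocone conditions inside $\Mod(\Sigma_L,\cC)$, whereas the paper forms each $\Mod(\Sigma_c,\cC)$ as a pullback over $\Fun{\cA}{\cC}$ and then intersects everything there.
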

\begin{proof}
    Let $\Sigma = (\cA, \bL, \bC)$ be a sketch.
    Observe that, by definition,
    \[
        \Mod (\Sigma, \cC)
        = \Mod (\Sigma_\bL, \cC) \cap
        \left(\,\textstyle\bigcap_{c \in \bC} \Mod (\Sigma_c, \cC)\right)
        \subseteq \Fun{\cA}{\cC},
    \]
    where $\Sigma_\bL$ is the limit sketch $(\cA, \bL)$, and $\Sigma_c$ is the colimit sketch $(\cA, \{c\})$ with only one cocone $c \in \bC$. These should be understood as intersections between full subcategories of $\Fun{\cA}{\cC}$.

    By~\autocite[Proposition 5.4.7.10]{Lurie2009}, every intersection of accessible reflective localizations of an accessible category is accessible.
    Since $\cC$ is presentable and $\cA$ is small, $\Fun{\cA}{\cC}$ is also presentable.
    Thus, proving that $\Mod (\Sigma, \cC)$ is accessible is equivalent to showing that $\Mod (\Sigma_\bL, \cC)$ and $\Mod (\Sigma_c, \cC)$ are accessible, and accessible reflective localizations of $\Fun{\cA}{\Spaces}$.
    By~\autoref{thm:lsketch->presentable}, $\Mod (\Sigma_\bL, \cC)$ is presentable and an accessible reflective localization of $\Fun{\cA}{\Spaces}$, because $\Sigma_\bL$ is a limit sketch and $\cC$ is presentable.

    Given a cocone $c \in \bC$ with $c \colon \coCones{D} \to \cA$, there is a functor $\widetilde{c} \colon \Fun{\cA}{\cC} \to \Fun{\sDelta{1}}{\cC}$ defined by the composition
    \[
        \Fun{\cA}{\cC}
        \overset{c^*}{\longto}
        \Fun{\coCones{D}}{\cC}
        \overset{L}{\longto}
        \Fun{\coCones{(\coCones{D})}}{\cC}
        \overset{i^*}{\longto}
        \Fun{\sDelta{1}}{\cC},
    \]
    where $i \colon \sDelta{0} \join \sDelta{0} \to D \join \sDelta{0} \join \sDelta{0}$ is the canonical inclusion, which induces a morphism
    \[
        i^*\colon
        \Fun{\coCones{(\coCones{D})}}{\cC}
        \simeq
        \Fun{D \join \sDelta{0} \join \sDelta{0}}{\cC}
        \longto
        \Fun{\sDelta{0} \join \sDelta{0}}{\cC}
        \simeq
        \Fun{\sDelta{1}}{\cC},
    \]
    and $L$ is the colimit-forming functor, left adjoint to the forgetful functor from cocones of diagram $\coCones{D}$ on $\cC$, which must exist because $\cC$ is cocomplete.
    Furthermore, since $c^*$, $L$ and $i^*$ are left adjoints, $\widetilde{c}$ is accessible.

    Let $J$ be the nerve of the two-point connected groupoid and $j \colon \sDelta{1} \to J$ be the groupoid completion of $\sDelta{1}$.
    By~\autocite[Corollary~2.1.1.8]{Land2021}, $\Fun{J}{\cC}$ corresponds to the full subcategory of $\Fun{\sDelta{1}}{\cC}$ spanned by the isomorphisms of~$\cC$.
    Because $\cC$ is presentable and $\sDelta{1}$ is small, $j^* \colon \Fun{J}{\cC} \to \Fun{\sDelta{1}}{\cC}$ is a left adjoint.
    Then, $\Mod (\Sigma_c, \cC)$ can be seen as a pullback
    \[
        \begin{tikzcd}[ampersand replacement=\&,cramped]
            {\Mod (\Sigma_c, \cC)} \& {\Fun{J}{\cC}} \\
            {\Fun{\cA}{\cC}} \& {\Fun{\sDelta{1}}{\cC}}\rlap{.}
            \arrow[from=1-1, to=1-2]
            \arrow["i"', hook, from=1-1, to=2-1]
            \arrow["\lrcorner"{anchor=center, pos=0.125}, draw=none, from=1-1, to=2-2]
            \arrow["{j^*}", hook, from=1-2, to=2-2]
            \arrow["{\widetilde{c}}", from=2-1, to=2-2]
        \end{tikzcd}
    \]
    This is a pullback of accessible $\infty$-categories and accessible functors, since left adjoints between accessible $\infty$-categories are accessible~\autocite[Proposition~5.4.7.7]{Lurie2009}.
    Thus, by~\autocite[Proposition~5.4.6.6]{Lurie2009}, $\Mod (\Sigma_c, \cC)$ is also accessible and the inclusion $i \colon \Mod (\Sigma_c, \cC) \to \Fun{\cA}{\cC}$ is an accessible functor.
    Therefore, $\Mod (\Sigma_c, \cC)$ is an accessible reflective localization.
\end{proof}

The previous theorem does \emph{not} prove that the $\infty$-category of models of a $\kappa$-sketch is $\kappa$\nobreakdash-acces\-sible.
This does not follow from our proof,  because~\autocite[Proposition 5.4.6.6]{Lurie2009} only proves that the pullback of a $\kappa$-accessible diagram is $\mu$-accessible for some $\mu \geq \kappa$.
In fact, there are $1$-categorical examples (\autocite[Remark 2.59]{Adamek1994} and \autocite[Example 3.3.6]{Makkai1989}) of $\aleph_0$-sketches with a category of models which is not $\aleph_0$-accessible.

\begin{corollary}\label{cor:main02}
    An $\infty$-category is accessible if and only if it is (normally) sketchable.
\end{corollary}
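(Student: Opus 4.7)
The plan is to deduce Corollary~\ref{cor:main02} as an immediate consequence of the two theorems just established, noting that all the nontrivial work has already been done.

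For the forward direction, suppose $\cC$ is accessible, so that $\cC$ is $\kappa$-accessible for some regular cardinal $\kappa$. Then I would apply Theorem~\ref{thm:accessible->sketch} directly: it produces a normal $\kappa$-sketch $\Sigma = (\widehat{\cA}, \bL, \bC)$ (constructed from the free $\kappa$-small cocompletion of $\Op{\cA}$, where $\cA$ is chosen so that $\cC \simeq \Ind_\kappa(\cA)$) such that $\cC \simeq \Mod(\Sigma)$. This exhibits $\cC$ as normally sketchable, and in particular as sketchable.

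For the backward direction, suppose $\cC$ is sketchable, so $\cC \simeq \Mod(\Sigma)$ for some sketch $\Sigma = (\cA, \bL, \bC)$. Since $\Spaces$ is presentable (indeed, $\aleph_0$-presentable), Theorem~\ref{thm:sketch->accessible} applies with target $\Spaces$ and yields that $\Mod(\Sigma) = \Mod(\Sigma, \Spaces)$ is accessible. Hence $\cC$ is accessible.

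There is essentially no obstacle here: the corollary is a clean packaging of the two preceding theorems, with $\Spaces$ serving as the target $\infty$-category in the second one. The only cosmetic point worth mentioning is that the statement says \emph{(normally) sketchable}, so in the forward direction one should stress that Theorem~\ref{thm:accessible->sketch} actually produces a \emph{normal} sketch, while the backward direction works for any sketch whether normal or not. I would not expect any further subtlety; the substantive content is in Theorems~\ref{thm:accessible->sketch} and~\ref{thm:sketch->accessible}, and this corollary is the two-line consequence.
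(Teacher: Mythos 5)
Your proposal is correct and is exactly the argument the paper intends: the corollary is stated without proof precisely because it is the immediate combination of Theorem~\ref{thm:accessible->sketch} (forward direction, which indeed yields a \emph{normal} sketch) and Theorem~\ref{thm:sketch->accessible} applied with the presentable target $\Spaces$ (backward direction). No gaps.
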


\begin{corollary}
    For every sketch $\Sigma$ there exists a normal sketch $\Theta$ such that $\Mod(\Sigma) \simeq \Mod(\Theta)$.
\end{corollary}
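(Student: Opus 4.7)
The plan is to combine the two main results of this section in the obvious way. Since $\Spaces$ is presentable (being the paradigmatic example of a presentable $\infty$-category), I would first apply \autoref{thm:sketch->accessible} with $\cC = \Spaces$ to the given sketch $\Sigma$. This yields that $\Mod(\Sigma) = \Mod(\Sigma, \Spaces)$ is an accessible $\infty$-category, so there exists a regular cardinal $\kappa$ such that $\Mod(\Sigma)$ is $\kappa$-accessible.

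Next, I would feed this $\kappa$-accessible $\infty$-category into \autoref{thm:accessible->sketch}, which guarantees that every $\kappa$-accessible $\infty$-category is normally $\kappa$-sketchable. Concretely, that theorem produces an explicit normal sketch $\Theta = (\widehat{\Mod(\Sigma)^\kappa}, \bL, \bC)$, where $\widehat{\Mod(\Sigma)^\kappa}$ denotes the free $\kappa$-small cocompletion of the $\infty$-category of $\kappa$-compact objects of $\Mod(\Sigma)$, $\bL$ is the collection of $\kappa$-small limit cones coming from $\Op{\Mod(\Sigma)^\kappa}$, and $\bC$ is the collection of canonical colimit cocones of the cone points of $\bL$. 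By construction one has $\Mod(\Sigma) \simeq \Mod(\Theta)$, which is exactly the conclusion we want.

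There is really no obstacle here: the statement is a formal consequence of the characterization of (normally) sketchable $\infty$-categories as exactly the accessible ones, phrased for the case where the input is already known to be a category of models of an abstract sketch. If anything, the only subtlety to flag is that the new sketch $\Theta$ is generally built over a much larger base $\infty$-category and at a larger cardinal than the original $\Sigma$, but no uniformity in $\kappa$ is claimed by the corollary.
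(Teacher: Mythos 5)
Your proof is correct and follows exactly the paper's argument: apply Theorem~\ref{thm:sketch->accessible} (with $\cC=\Spaces$) to get that $\Mod(\Sigma)$ is $\kappa$-accessible for some regular cardinal $\kappa$, then apply Theorem~\ref{thm:accessible->sketch} to produce the normal $\kappa$-sketch $\Theta$ over the free $\kappa$-small cocompletion of the $\kappa$-compact objects. Your explicit description of $\Theta$ and the remark that no uniformity in $\kappa$ is claimed are both consistent with the paper.
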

\begin{proof}
    By~\autoref{thm:sketch->accessible}, we know that $\Mod (\Sigma)$ is $\kappa$-accessible for some regular cardinal $\kappa$.
    Thus, we infer from \autoref{thm:accessible->sketch} that there exists a normal $\kappa$-sketch $\Theta = (\Op{\cA}, \bL, \bC)$ such that $\Mod (\Sigma) \simeq \Mod (\Theta)$, where $\cA$ is the free $\kappa$-small cocompletion of the $\infty$-category of $\kappa$-compact objects in $\Mod (\Sigma)$, and $\bL$ is the set of all $\kappa$-small limits of diagrams in $\Mod^\kappa (\Sigma)$.
\end{proof}

The $\infty$-category of models of a sketch $\Sigma = (\cA, \bL, \bC)$ can be viewed as the intersection of the full subcategories of $\Fun{\cA}{\cC}$ spanned by the models of the limit sketch $\Sigma_\bL = (\cA, \bL)$ and those of the colimit sketch $\Sigma_\bC = (\cA, \bC)$, respectively:
\[
    \Mod (\Sigma, \cC)
    = \Mod (\Sigma_\bL, \cC)
    \cap \Mod (\Sigma_\bC, \cC).
\]
If $\cC$ is presentable, then, by~\autoref{thm:lsketch->presentable}, $\Mod (\Sigma_\bL, \cC)$ is presentable, and by~\autoref{thm:sketch->accessible}, $\Mod (\Sigma_\bC, \cC)$ is accessible.
\autoref{prop:modiscomplete} implies that $\Mod (\Sigma_\bC, \cC)$ is cocomplete, and therefore it is also presentable.
Thus, $\Mod (\Sigma, \cC)$ is the intersection of two presentable $\infty$-categories.

\printbibliography

@book{Adamek1994,
  author    = {Ji{\v{r}}í Adámek and Ji{\v{r}}í Rosický},
  date      = {1994},
  title     = {Locally Presentable and Accessible Categories},
  doi       = {10.1017/cbo9780511600579},
  publisher = {Cambridge University Press},
  number    = {189},
  series    = {London Mathematical Society Lecture Note Series}
}

@article{Adamek2002,
  author       = {Ji{\v{r}}í Adámek and Francis Borceux and Stephen Lack and Ji{\v{r}}í Rosický},
  date         = {2002},
  journaltitle = {Journal of Pure and Applied Algebra},
  title        = {A classification of accessible categories},
  doi          = {10.1016/s0022-4049(02)00126-3},
  number       = {1-3},
  volume       = {175},
  publisher    = {Elsevier {BV}}
}

@article{Anel2022,
  author       = {Mathieu Anel and Georg Biedermann and Eric Finster and André Joyal},
  date         = {2022},
  journaltitle = {Advances in Mathematics},
  title        = {Left-exact localizations of $\infty$-topoi I: Higher sheaves},
  doi          = {10.1016/j.aim.2022.108268},
  volume       = {400},
  eid          = {108268},
  publisher    = {Elsevier {BV}}
}

@article{Badzioch2002,
  author       = {Bernard Badzioch},
  date         = {2002},
  journaltitle = {Annals of Mathematics},
  title        = {Algebraic theories in homotopy theory},
  number       = {3},
  volume       = {155},
  pages        = {895-913}
}

@article{Bastiani1972,
  author       = {Bastiani, Andree and Ehresmann, Charles},
  date         = {1972},
  journaltitle = {Cahiers de Topologie et Géométrie Différentielle Catégoriques},
  title        = {Categories of sketched structures},
  pages        = {105--214},
  volume       = {13}
}

@book{Borceux1994,
  author    = {Borceux, Francis},
  date      = {1994},
  title     = {Handbook of Categorical Algebra},
  isbn      = {052144179X},
  publisher = {Cambridge University Press},
  series    = {Encyclopedia of Mathematics and its Applications},
  number    = {50}
}

@article{Caviglia2016,
  author       = {Giovanni Caviglia and Geoffroy Horel},
  date         = {2016},
  journaltitle = {Algebraic {\&} Geometric Topology},
  title        = {Rigidification of higher categorical structures},
  number       = {6},
  pages        = {3533--3562},
  volume       = {16},
  publisher    = {Mathematical Sciences Publishers}
}

@article{Chu2021,
  author       = {Chu, Hongyi and Haugseng, Rune},
  date         = {2021},
  journaltitle = {Advances in Mathematics},
  title        = {Homotopy-coherent algebra via Segal conditions},
  eid          = {107733},
  volume       = {385},
  publisher    = {Elsevier BV}
}

@book{Cisinski2019,
  author    = {Denis-Charles Cisinski},
  date      = {2019},
  title     = {Higher Categories and Homotopical Algebra},
  doi       = {10.1017/9781108588737},
  publisher = {Cambridge University Press},
  series    = {Cambridge Studies in Advanced Mathematics},
  number    = {180}
}

@article{CorriganSalter2015,
  author       = {Bruce R. Corrigan-Salter},
  date         = {2015},
  journaltitle = {Journal of Pure and Applied Algebra},
  title        = {Rigidification of homotopy algebras over finite product sketches},
  doi          = {10.1016/j.jpaa.2014.07.019},
  number       = {6},
  pages        = {1962--1991},
  volume       = {219},
  publisher    = {Elsevier {BV}}
}

@book{Gabriel1971,
  author    = {Peter Gabriel and Friedrich Ulmer},
  date      = {1971},
  title     = {Lokal präsentierbare Kategorien},
  publisher = {Springer},
  series    = {Lecture Notes in Mathematics},
  number    = {221}
}

@book{Heuts2022,
  author    = {Heuts, Gijs and Moerdijk, Ieke},
  date      = {2022},
  title     = {Simplicial and dendroidal homotopy theory},
  doi       = {10.1007/978-3-031-10447-3},
  series    = {A Series of Modern Surveys in Mathematics},
  number    = {75},
  publisher = {Springer Cham}
}

@unpublished{Hoyois2019,
  author = {Marc Hoyois},
  date   = {2019},
  title  = {On Quillen’s plus construction},
  url    = {https://hoyois.app.uni-regensburg.de/papers/acyclic.pdf}
}

@article{Joyal2002,
  author       = {A. Joyal},
  date         = {2002},
  journaltitle = {Journal of Pure and Applied Algebra},
  title        = {Quasi-categories and Kan complexes},
  doi          = {10.1016/s0022-4049(02)00135-4},
  number       = {1-3},
  pages        = {207--222},
  volume       = {175},
  publisher    = {Elsevier {BV}}
}

@unpublished{JoyalChicago,
  author = {André Joyal},
  date   = {2008},
  title  = {Notes on quasi-categories},
  url    = {https://www.math.uchicago.edu/~may/IMA/Joyal.pdf}
}

@unpublished{JoyalCRM,
  author       = {André Joyal},
  date         = {2008},
  title        = {The theory of quasi-categories and its applications},
  howpublished = {CRM Quaderns 45-2},
  url          = {https://mat.uab.cat/~kock/crm/hocat/advanced-course/Quadern45-2.pdf}
}

@book{Kelly1982,
  author    = {Gregory Maxwell Kelly},
  date      = {1982},
  title     = {Basic Concepts of Enriched Category Theory},
  isbn      = {9780521287029},
  publisher = {Cambridge University Press},
  series    = {London Mathematical Society Lecture Note Series},
  number    = {64}
}

@article{Lack2022a,
  author       = {Lack, Stephen and Tendas, Giacomo},
  date         = {2022},
  journaltitle = {Advances in Mathematics},
  title        = {Flat vs. filtered colimits in the enriched context},
  doi          = {10.1016/j.aim.2022.108381},
  issn         = {0001-8708},
  volume       = {404},
  eid          = {108381},
  publisher    = {Elsevier BV}
}

@article{Lair1981,
  author       = {C. Lair},
  date         = {1981},
  journaltitle = {Diagrammes},
  title        = {Catégories modelables et catégories esquissables},
  pages        = {1-20},
  volume       = {6}
}

@book{Land2021,
  author    = {Land, Markus},
  date      = {2021},
  title     = {Introduction to Infinity-Categories},
  doi       = {10.1007/978-3-030-61524-6},
  isbn      = {9783030615246},
  issn      = {2296-455X},
  publisher = {Birkhäuser Cham},
  series    = {Compact Textbooks in Mathematics}
}

@book{Lurie2009,
  author    = {Jacob Lurie},
  date      = {2009},
  title     = {Higher Topos Theory},
  doi       = {10.1515/9781400830558},
  number    = {170},
  publisher = {Princeton University Press},
  series    = {Annals of Mathematics Studies},
  year      = {2009}
}

@misc{Lurie2017,
  author = {Jacob Lurie},
  date   = {2017},
  title  = {Higher Algebra},
  url    = {https://people.math.harvard.edu/~lurie/papers/HA.pdf}
}

@unpublished{Macpherson2021,
  author      = {Andrew W. Macpherson},
  date        = {2021},
  title       = {Adjoining colimits},
  eprint      = {2111.12117},
  eprintclass = {math.CT},
  eprinttype  = {arXiv}
}

@book{Makkai1989,
  author    = {Michael Makkai and Robert Paré},
  date      = {1989},
  title     = {Accessible Categories: The Foundations of Categorical Model Theory},
  publisher = {American Mathematical Society},
  series    = {Contemporary Mathematics},
  number    = {104}
}

@article{Marelli2021,
  author       = {Giovanni Marelli},
  date         = {2021},
  journaltitle = {Cahiers de Topologie et Géométrie Différentielle Catégoriques},
  title        = {A sketch for derivators},
  volume       = {LXII},
  number       = {4}
}

@article{Nguyen2019,
  author       = {Nguyen, Hoang Kim and Raptis, George and Schrade, Christoph},
  date         = {2019},
  journaltitle = {Journal of the London Mathematical Society},
  title        = {Adjoint functor theorems for $\infty$‐categories},
  number       = {2},
  pages        = {659--681},
  volume       = {101}
}

@unpublished{Raptis2022,
  author     = {Raptis, George and Schäppi, Daniel},
  date       = {2022},
  title      = {Flat functors in higher topos theory},
  eprint     = {2208.13897},
  eprinttype = {arXiv},
  year       = {2022}
}

@article{Rezk2000,
  author       = {Charles Rezk},
  date         = {2000},
  journaltitle = {Transactions of the American Mathematical Society},
  title        = {A model for the homotopy theory of homotopy theory},
  doi          = {10.1090/s0002-9947-00-02653-2},
  number       = {3},
  pages        = {973--1007},
  volume       = {353},
  publisher    = {American Mathematical Society ({AMS})}
}

@unpublished{Rezk2021,
  author = {Rezk, Charles},
  date   = {2021},
  title  = {Generalizing accessible $\infty$-categories},
  url    = {https://rezk.web.illinois.edu/accessible-cat-thoughts.pdf}
}

@unpublished{Rezk2022,
  author  = {Rezk, Charles},
  date    = {2022},
  title   = {Introduction to quasicategories},
  url     = {https://faculty.math.illinois.edu/~rezk/quasicats.pdf},
  comment = {Course notes for https://faculty.math.illinois.edu/~rezk/595-sp22/math-595-sp22.html}
}

@article{Rezk2022a,
  author  = {Rezk, Charles},
  date    = {2025},
  title   = {Free colimit completion in $\infty$-categories},
  doi     = {10.4310/hha.2025.v27.n1.a15},
  journal = {Homology, Homotopy and Applications},
  volume  = {27},
  number  = {1},
  pages   = {275--292}
}

@article{Rosicky2007,
  author       = {Rosický, Ji{\v{r}}í},
  date         = {2007},
  journaltitle = {Advances in Mathematics},
  title        = {On homotopy varieties},
  doi          = {10.1016/j.aim.2007.02.011},
  number       = {2},
  pages        = {525--550},
  volume       = {214},
  publisher    = {Elsevier {BV}}
}

@article{Rosicky2013,
  author       = {Ji{\v{r}}í Rosický},
  date         = {2013},
  journaltitle = {Applied Categorical Structures},
  title        = {Rigidification of algebras over essentially algebraic theories},
  number       = {2},
  pages        = {159--175},
  volume       = {23},
  publisher    = {Springer Science and Business Media {LLC}}
}

@unpublished{Simpson1999,
  author     = {Carlos Simpson},
  date       = {1999},
  title      = {A Giraud-type characterization of the simplicial categories associated to closed model categories as $\infty$-pretopoi},
  eprint     = {math/9903167},
  eprinttype = {arXiv}
}

\vspace{1cm}

\noindent
Departament de Matemàtiques i Informàtica, Universitat de Barcelona, Gran Via de les Corts Catalanes 585, 08007 Barcelona, Spain

\medskip

\noindent
carles.casacuberta@ub.edu, javier.gutierrez@ub.edu, dvmcarpena@pm.me

\end{document}